\newcommand\beqn{\begin{equation}}
\newcommand\eeqn{\end{equation}}
\newcommand\beqny{\begin{eqnarray}}
\newcommand\eeqny{\end{eqnarray}}
\newcommand\beqnyn{\begin{eqnarray*}}
\newcommand\eeqnyn{\end{eqnarray*}}
\newtheorem{theorem}{Theorem}[section]
\newtheorem{lemma}[theorem]{Lemma}
\newtheorem{corollary}[theorem]{Corollary}
\newtheorem{definition}[theorem]{Definition}
\newtheorem{remark}[theorem]{Remark}
\numberwithin{equation}{section}
\newcommand{\op}[1]{\operatorname{\text{\rm #1}}} 
\begin{document}

\title[Regularity of minimal submanifolds and mean curvature flow]{Regularity of minimal submanifolds and \\mean curvature flows with a common free boundary}
\author{Brian Krummel} 
\thanks{Mathematics Department, The University of Texas at Austin, \\ \indent 
2515 Speedway Stop C1200, RLM 8.100 \\ \indent Austin, TX 78712, USA \\ \indent 
bkrummel@math.utexas.edu}

\maketitle

\begin{abstract} 
Let $N$ be a smooth $(n+l)$-dimensional Riemannian manifold.  We show that if $V$ is an area-stationary union of three or more $C^{1,\mu}$ $n$-dimensional submanifolds-with-boundary $M_k \subset N$ with a common boundary $\Gamma$, then $\Gamma$ is smooth and each $M_k$ is smooth up to $\Gamma$ (real-analytic in the case $N$ is real-analytic).  This extends a previous result of the author for codimension $l = 1$. 

We additionally show that if $\{V_t\}_{t \in (-1,1)}$ is a Brakke flow such that each time-slice $V_t$ is a union of three or more $n$-dimensional submanifolds-with-boundary $M_{k,t} \subset N$ with a common boundary $\Gamma_t$ and with parabolic $C^{2+\mu}$ regularity in time-space, then $\{\Gamma_t\}_{t \in (-1,1)}$ and $\{M_{k,t}\}_{t \in (-1,1)}$ are smooth (second Gevrey with real-analytic time-slices in the case $N$ is real-analytic). 
\end{abstract}

\setcounter{tocdepth}{1}

\section{Introduction}

We will consider the higher regularity of unions of three or more submanifolds with a common boundary that arise as singular minimal submanifolds and mean curvature flows.  In particular, we will prove the following result for singular minimal submanifolds, i.e. stationary integral varifolds: 

\noindent \textbf{Theorem A.}  \textit{Let $V$ be a stationary $n$-dimensional integral varifold in a smooth $(n+l)$-dimensional Riemannian manifold $N$ such that $V$ consists of $q \geq 3$ $C^{1,\mu}$ $n$-dimensional submanifolds-with-boundary $M_k \subset N$ with (constant) integer multiplicities and a common boundary $\Gamma$.  Suppose $M_k$ are not all tangent to the same plane at any point of $\Gamma$.  Then $\Gamma$ is smooth and each $M_k$ is smooth up to $\Gamma$.  In the case that $N$ is real-analytic, $\Gamma$ is real-analytic and each $M_k$ is real-analytic up to $\Gamma$.}

As will be discussed in Sections \ref{sec:preliminaries_sec} and \ref{sec:results_sec} below, the hypotheses of Theorem A, in particular that $V$ is stationary, are equivalent to each $M_k$ having zero mean curvature and the sum of the unit conormals of $M_k$ along $\Gamma$ equaling zero. 

Theorem A was previously proven by Kinderlehrer, Nirenberg, and Spruck~\cite{KNS} in the special case $l = 1$ and $q = 3$ and later proven by the author~\cite{Krum} in the case $l = 1$ and $q \geq 3$ is arbitrary.  Here we extend Theorem A to codimension $l > 1$.  We also prove an analogous result for singular mean curvature flows, i.e. Brakke flows:

\noindent \textbf{Theorem B.}  \textit{Let $\{V_t\}_{t \in (-1,1)}$ be a Brakke flow in a smooth $(n+l)$-dimensional Riemannian manifold $N$ such that each time-slice $V_t$ is a sum of three or more $n$-dimensional submanifolds-with-boundary $M_{k,t} \subset N$ with parabolic $C^{2+\mu}$ regularity (see Section \ref{sec:parabolic_subsec}), constant integer multiplicities, and a common boundary $\Gamma_t$.  Suppose $M_{k,t}$ are not all tangent to the same plane at any point of $\Gamma_t$.  Then $\{\Gamma_t\}_{t \in (-1,1)}$ and $\{M_{k,t}\}_{t \in (-1,1)}$ are smooth in time-space.  In the case that $N$ is real-analytic, $\{\Gamma_t\}_{t \in (-1,1)}$ and $\{M_{k,t}\}_{t \in (-1,1)}$ are second Gevrey in time-space and all the time-slices $\Gamma_t$ and $M_{k,t}$ are real-analytic.}

As will be discussed in Section \ref{sec:results_sec} below, the hypotheses of Theorem B, in particular that $\{V_t\}_{t \in (-1,1)}$ is a Brakke flow, are equivalent to each $M_{k,t}$ flowing by mean curvature and the sum of the unit conormals of $M_{k,t}$ along $\Gamma_t$ equaling zero.  Note that since $M_{k,t}$ solve a parabolic problem, namely mean curvature flow, $\Gamma_t$ and $M_{k,t}$ are not generally expected to be real-analytic in time-space.  (A well-known example due to Kowalevsky~\cite{Kow75} shows that the solution $u$ to the heat equation $\partial u/\partial t = \partial^2 u/\partial x^2$ on $\mathbb{R}^2$ with initial condition $u = (1-x)^{-1}$ at $t = 0$ is not real analytic in $(t,x)$.)

The primary motivation for proving Theorem A for codimension $l = 1$ in~\cite{Krum} was a strengthening of Wickramasekera's general regularity theorem of~\cite{Wic}.  Wickramasekera showed that if a codimension one, stationary, integral $n$-dimensional varifold $V$ of $N$ is stable on its regular part and is nowhere locally the union of three or more $C^{1,\mu}$ hypersurface-with-boundary with a common boundary, then $\op{sing} V$ is empty if $n \leq 6$, discrete if $n = 7$, and has Hausdorff dimension at most $n-7$ when $n \geq 8$.  By~\cite{Krum}, the same conclusion holds true if we instead assume that $V$ is nowhere locally the union of three or more smooth hypersurfaces-with-boundary $M_k$ with a common boundary ($M_k$ real-analytic in the case that $N$ is real-analytic). 

Since the publication of~\cite{Krum}, it has become clear that Theorem A has important applications when codimension $> 1$.  In particular, the result can be applied to the recent work of Hughes of~\cite{Hughes} on the regularity of minimal Lipschitz two-valued graphs.  Hughes showed that if a minimal Lipschitz two-valued graph $V$ of $\mathbb{R}^{n+l}$ is $L^2$ close to the union of four $n$-dimensional half-planes with a common boundary that is not a union of two intersecting $n$-dimensional planes, then $V$ is locally the union of four $C^{1,\mu}$ submanifolds-with-boundary $M_k$ with a common boundary $\Gamma$.  As a consequence of Theorem A, $M_k$ and $\Gamma$ are in fact real-analytic, see Corollary \ref{hughes_cor} below.  

Theorems A and B have applications to recent work of Schulze and White~\cite{SchultzeWhite} on mean curvature flows of submanifold clusters with triple edges in codimension $\geq 1$.  A submanifold cluster with triple edges consists of smoothly embedded $n$-dimensional submanifolds meeting along $(n-1)$-dimensional edges in triples at equal angles and possibly meeting at higher order junctions.  A static triple junction is a union of three $n$-dimensional half-planes meeting along a common boundary at equal angles that is static in time.  Schulze and White show that if a smooth mean curvature flows with triple edges $\mathcal{M}^j$ converge weakly to a static triple junction $J$, then $\mathcal{M}^j$ converge smoothly to $J$.  Theorem B implies second Gevrey regularity of $\mathcal{M}^j$ and real-analyticity of the time-slices of $\mathcal{M}^j$ for large $j$.  Schultze and White apply their regularity result to prove smooth short time existence of smooth mean curvature flows with triple edges without higher order junctions.  Their approach to short time existence uses Ilmanen's elliptic regularization scheme~\cite{EllipticRegularization} to construct a flat chain mod $3$, $F_{\varepsilon}$, that minimizes the area functional with respect to a conformally Euclidean metric on $\mathbb{R}^{N+1}$ and is a translating soliton for mean curvature flow in $\mathbb{R}^N$ with the standard metric.  Schultze and White need my result, together with~\cite[Corollary 2]{Sim93} of Simon, to show that $F_{\varepsilon}$ is smooth.  Rescaling $F_{\varepsilon}$ in the time direction and letting $\varepsilon \downarrow 0$, Schulze and White produce the desired mean curvature flow on $\mathbb{R}^N$. 

Both~\cite{KNS} and~\cite{Krum} proved special cases of Theorem A using a hodograph transformation to transform the free boundary problem to a boundary value problem on a half-space and then apply the elliptic regularity theory of Agmon, Douglis, and Nirenberg~\cite{ADK1}~\cite{ADK2} and Morrey~\cite{Morrey}.  The main challenge is checking the complementing condition for the boundary values.  In~\cite{KNS}, the boundary condition was that $M_1,M_2,M_3$ meet along $\Gamma$ at constant angles. ~\cite{Krum} used the boundary condition that the sum of the unit conormals to $M_k$ equals zero along $\Gamma$, which in codimension one is equivalent to the sum of the unit normals to $M_k$ equaling zero along $\Gamma$.  We will extend the use of the boundary condition in~\cite{Krum} to codimension $> 1$. 

The proof of our regularity result for Brakke flow, Theorem B, is similar to the proof of Theorem A except we apply the parabolic regularity theory of Solonnikov~\cite{Parabolic} and we apply~\cite{Friedman} to prove Gevrey regularity.  The arguments of~\cite{Friedman} require some modification to account for the derivatives in time and space being weighted differently, in particular treating the combinatorial computations using a key inequality, \eqref{parareg_combo} below.  Due to these changes, we state a general Gevrey regularity theorem for parabolic systems, Theorem \ref{parareg_thm}, and include its proof. 

\noindent \textbf{Organization of paper.}  In Section \ref{sec:preliminaries_sec} we will discuss notation and preliminaries, including the basic facts about integral varifolds and the formal definitions of parabolic $C^{k+\mu}$ regularity of functions and submanifolds.  Those not familiar with varifolds might want to focus on the special case of varifolds which are sums of submanifolds with a common boundary as in Theorems A and B.  See Subsection \ref{sec:sums_subm_sec} for a discussion of the basic facts about this special class of varifolds.  In Section \ref{sec:results_sec} we restate Theorems A and B in more precise and useful forms and we discuss our application to the work of Hughes~\cite{Hughes}.  Section \ref{sec:elliptic_section} contains the proof of Theorem A using the partial hodograph transformation and Section \ref{sec:parabolic_section} similarly contains Theorem B in the case that $N$ is smooth.  In Section \ref{sec:gevrey_section}, we state and prove the general Gevrey regularity result for parabolic systems, Theorem \ref{parareg_thm}, from which we obtain Theorem B in the case that $N$ is real-analytic.

\section{Preliminaries} \label{sec:preliminaries_sec}

\subsection{Notation} \label{sec:notation_subsec} Let $N$ be embedded as smooth (real analytic) $(n+l)$-dimensional submanifold of $\mathbb{R}^{n+m}$, where $n \geq 1$ and $1 \leq l \leq m$ are integers. 

We shall use coordinates $X = (x_1,x_2,\ldots,x_{n+m})$ on $\mathbb{R}^{n+m}$.  We use coordinates $x = (x_1,x_2,\ldots,x_n)$ on $\mathbb{R}^n$ and let $x' = (x_1,x_2,\ldots,x_{n-1})$ so that $x = (x',x_n)$.  We let $t$ denote a time parameter.  In the proofs of Theorems A and B, we will let $y$ denote coordinates on $\mathbb{R}^n$ and $\tau$ denote coordinates in time after applying the hodograph transform.

For any integer $k$, we let $B^k_r(x_0) = \{ x \in \mathbb{R}^k : |x-x_0| < r \}$ for each $x_0 \in \mathbb{R}^k$ and $r > 0$.  When $k = n$, we let $B_r(x_0) = B^n_r(x_0)$. 

For each positive integer $k$, $\mathcal{H}^k$ denotes the $k$-dimensional Hausdorff measure.

\subsection{Integral varifolds}  Below we present the basic facts about integral varifolds.  We refer the reader to~\cite{GMT}, in particular Chapters 4 and 8, for a more thorough discussion.  In Subsection \ref{sec:sums_subm_sec}, we will discuss the simpler special case of varifolds equal to sums of submanifolds with a common boundary.

Let $\mathcal{O}$ be an open subset of the $(n+l)$-dimensional submanifold $N$ of $\mathbb{R}^{n+m}$.  A subset $M$ of $\mathcal{O}$ is \textit{countably $n$-rectifiable} if $M \subseteq E_0 \cup \bigcup_{k=1}^{\infty} f_k(\mathbb{R}^n)$ for a subset $E_0 \subseteq \mathcal{O}$ with $\mathcal{H}^n(E_0) = 0$ and a countable collection of Lipschitz functions $f_k : \mathbb{R}^n \rightarrow \mathcal{O}$.  It is known that for any $\mathcal{H}^n$-measurable subset $M$ of $\mathcal{O}$ with locally finite $\mathcal{H}^n$-measure, $M$ is countably $n$-rectifiable if and only if at $\mathcal{H}^n$-a.e.~$Y \in M$ there exists a linear subspace $T_Y M$, called the \textit{approximate tangent plane} of $M$ at $Y$, such that 
\begin{equation*}
	\lim_{\rho \downarrow 0} \int_{\eta_{Y,\rho}(M)} \zeta \, d\mathcal{H}^n = \int_{T_Y M} \zeta \, d\mathcal{H}^n
\end{equation*}
for every $\zeta \in C_c^0(\mathcal{O})$, where $\eta_{Y,\rho}(X) = (X-Y)/\rho$ for each $\rho > 0$ and $X \in \mathbb{R}^n$.  For example, any $n$-dimensional $C^1$ submanifold $M$ of $\mathcal{O}$ is countably $n$-rectifiable and at each $Y \in M$ the approximate tangent plane of $M$ at $Y$ is the (usual) tangent plane of $M$ at $Y$. 

An \textit{$n$-dimensional integral varifold} $V = \mathbf{v}(M,\theta)$ of $\mathcal{O}$ is a equivalence class of pairs of a countably $n$-rectifiable set $M \subset \mathcal{O}$ and a multiplicity function $\theta : M \rightarrow \mathbb{N}$ that is locally $\mathcal{H}^n$-integrable on $M$ such that $\mathbf{v}(M_1,\theta_1) = \mathbf{v}(M_2,\theta_2)$ whenever $\mathcal{H}^n((M_1 \setminus M_2) \cup (M_2 \setminus M_1)) = 0$ and $\theta_1 = \theta_2$ $\mathcal{H}^n$-a.e.~on $M_1 \cap M_2$.  For example, when $M$ is a $n$-dimensional $C^1$ submanifold of $\mathcal{O}$, $|M| = \mathbf{v}(M,1)$ is an $n$-dimensional integral varifold.  To each integral varifold $V = \mathbf{v}(M,\theta)$ we associate a Radon measure $\|V\|$ on $\mathcal{O}$ such that for each Borel set $A \subseteq \mathcal{O}$, 
\begin{equation*}
	\|V\|(A) = \int_{M \cap A} \theta \, d\mathcal{H}^n
\end{equation*}
represents the $n$-dimensional area of $V$ in $A$.  (Note that~\cite{GMT} denotes $\|V\|$ by $\mu_V$.)

For each proper, Lipschitz map $f : \mathcal{O} \rightarrow \mathcal{O}'$ between two open subsets $\mathcal{O}$ and $\mathcal{O}'$ of Riemannian manifolds $N$ and $N'$ respectively, the \textit{pushforward} $f_{\#} V$ of the $n$-dimensional integral varifold $V = \mathbf{v}(M,\theta)$ is the $n$-dimensional integral varifold $f_{\#} V = \mathbf{v}(f(M),\widetilde{\theta})$, where $\widetilde{\theta}(y) = \sum_{x \in f^{-1}(y) \cap M} \theta(x)$ for each $y \in f(M)$. 

A sequence of $n$-dimensional integral varifolds $V_k = \mathbf{v}(M_k,\theta_k)$ converge to an integral varifold $V = \mathbf{v}(M,\theta_k)$ \textit{in the sense of varifolds} if and only if 
\begin{equation*}
	\int_{M \cap \mathcal{O}} \zeta(X,T_X M) \, \theta(X) \, d\mathcal{H}^n(X) 
	= \lim_{k \rightarrow \infty} \int_{M_k \cap \mathcal{O}} \zeta(X,T_X M_k) \, \theta_k(X) \, d\mathcal{H}^n(X)
\end{equation*}
for every $\zeta \in C^0_c(G_n(\mathcal{O}))$, where $G_n(\mathcal{O})$ is the Grassmannian 
\begin{equation*}
	G_n(\mathcal{O}) = \{ (X,S) : X \in \mathcal{O}, \, S \text{ is a $n$-dimensional linear subspace of } T_X N \}. 
\end{equation*}

Let $\zeta \in C^1_c(\mathcal{O};TN)$ be an arbitrary vector field and $f_t : \mathcal{O} \rightarrow \mathcal{O}$, $t \in (-1,1)$, be the one-parameter family of diffeomorphisms generated by $\zeta$.  The \textit{first variation of area} $\delta V : C^1_c(\mathcal{O};TN) \rightarrow \mathbb{R}$ of the integral varifold $V = \mathbf{v}(M,\theta)$ is the linear functional given by
\begin{equation*}
	\delta V(\zeta) = \left. \frac{d}{dt} \|f_{t\#} V\|(\op{spt} \zeta) \right|_{t=0} = \int_{M \cap \mathcal{O}} \op{div}_{T_X M} \zeta(X) \, d\|V\|(X), 
\end{equation*}
where $\op{div}_{T_X M} \zeta(X) = \sum_{i=1}^n \nabla_{\tau_i} \zeta(X) \cdot \tau_i$ for any orthonormal basis $\tau_1,\tau_2,\ldots,\tau_n$ of $T_X M$.  We say that an integrable varifold $V$ has \textit{generalized mean curvature} $H$ if $H$ is a locally $\|V\|$-integrable vector field on $\mathcal{O}$ such that 
\begin{equation*}
	\delta V(\zeta) = -\int_{\mathcal{O}} H(X) \cdot \zeta(X) \, d\|V\|(X) 
\end{equation*}
for all $\zeta \in C^1_c(\mathcal{O};TN)$.  We say that an integral varifold $V$ is \textit{stationary} if $\delta V(\zeta) = 0$ for all $\zeta \in C^1_c(\mathcal{O};TN)$. 

Let $V$ be a stationary integral varifold $V$ and $Y \in \op{spt} \|V\|$.  We say an $n$-dimensional integral varifold $C$ of $T_Y N$ is a \textit{tangent cone} to $V$ at $Y$ if $C = \lim_{j \rightarrow \infty} \eta_{Y,\rho_j \#} V$ in the sense varifolds of $\mathbb{R}^{n+m}$ for some $\rho_j \downarrow 0$, where $\eta_{Y,\rho}(X) = (X-Y)/\rho$ for each $\rho > 0$ and $X \in \mathbb{R}^{n+m}$.  There always exists at least one tangent cone of $V$ at $Y$, though it is unknown if in general there is a unique tangent cone independent of the sequence $\rho_j$.  Every tangent cone $C$ to a stationary integral varifold is itself a stationary integral varifold and is a cone in the sense that $C = \eta_{0,\rho \#} C$ for all $\rho > 0$. 

\subsection{Sums of submanifolds-with-boundary} \label{sec:sums_subm_sec} We will be primarily interested in $n$-dimensional integral varifolds $V$ in $\mathcal{O}$ of the form 
\begin{equation} \label{V_form}
	V = \sum_{k=1}^q \theta_k \, |M_k|
\end{equation}
consisting of $C^1$ submanifolds-with-boundary $M_k$ with integer multiplicity $\theta_k$ and a common boundary $\Gamma$ in $\mathcal{O}$.  Here the sum is taken by regarding integral varifolds as Radon measures on the Grassmannian so that $V$ is the integral varifold $V = \mathbf{v}(M_1 \cup M_2 \cup \cdots \cup M_q,\widetilde{\theta})$ with $\widetilde{\theta}(X) = \sum_{k \text{ s.t.}\, X \in M_k} \theta_k$ at $\mathcal{H}^n$-a.e.~$X \in M_1 \cup M_2 \cup \cdots \cup M_q$.  When $V$ is given by \eqref{V_form}, the area measure $\|V\|$ of $V$ is given by 
\begin{equation*}
	\|V\|(A) = \sum_{k=1}^q \theta_k \, \mathcal{H}^n(M_k \cap A)
\end{equation*}
for every Borel set $A \subseteq \mathcal{O}$. 

Suppose $M_k$ is a $C^2$ submanifold-with-boundary for all $k$.  Then by the divergence theorem, the first variation of the area of $V$ is given by 
\begin{equation*}
	\delta V(\zeta) 
	= \sum_{k=1}^q \int_{M_k} \op{div}_{T_X M_k} \zeta(X) \, \theta_k \, d\mathcal{H}^n(X)
	= \int_{\Gamma} \, \sum_{k=1}^q \eta_k \cdot \zeta \,  \theta_k \, d\mathcal{H}^{n-1} 
		- \sum_{k=1}^q \int_{M_k} H_k \cdot \zeta \, \theta_k \, d\mathcal{H}^n 
\end{equation*}
for all $\zeta \in C^1_c(N;TN)$, where $H_k$ is the mean curvature of $M_k$ and $\eta_k$ is the outward unit conormal vector to the boundary of $M_k$.  Thus $V$ has generalized mean curvature if and only if   
\begin{equation} \label{mainthm_tangents}
	\sum_{k=1}^q \theta_k \, \eta_k = 0 \text{ on } \Gamma . 
\end{equation}
$V$ is stationary if and only if $H_k = 0$ on $M_k$ for all $k$ and \eqref{mainthm_tangents} holds true. 

\subsection{Parabolic regularity}  \label{sec:parabolic_subsec}  For Theorem \ref{mainthm2}, our main regularity result for mean curvature flow, we need the following the notion of parabolic $C^{k+\mu}$ regularity based on~\cite{Parabolic}. 

\begin{definition} \label{parareg_defn}
Let $\Xi$ be an open subset of $\mathbb{R} \times \mathbb{R}^n$.  Let $f : \Xi \rightarrow \mathbb{R}^m$ be an arbitrary function.  For each $\mu \in (0,1]$, let 
\begin{align*}
	\langle f \rangle_{\mu,t;\Xi} &= \sup_{x \text{ s.t. } \Xi \cap \mathbb{R} \times \{x\} \neq \emptyset} 
		[D_t^{\alpha} D_x^{\beta} f(\,\cdot\,,x)]_{\mu;\Xi \cap \mathbb{R} \times \{x\}}, \\  
	\langle f \rangle_{\mu,x;\Xi} &= \sup_{t \text{ s.t. } \Xi \cap \{t\} \times \mathbb{R}^n \neq \emptyset} 
		[D_t^{\alpha} D_x^{\beta} f(t,\cdot\,)]_{\mu;\Xi \cap \{t\} \times \mathbb{R}^n}. 
\end{align*}
For each integer $k \geq 0$ and $\mu \in (0,1)$, we define
\begin{align*}
	\|f\|_{C^{k+\mu}_{\rm para}(\Xi)} = {}& \sum_{2\alpha + |\beta| \leq k} \sup_{\Xi} |D_t^{\alpha} D_x^{\beta} f| 
		+ \sum_{2\alpha + |\beta| = k-1} \langle D_t^{\alpha} D_x^{\beta} f \rangle_{(1+\mu)/2,t;\Xi} 
		\\&+ \sum_{2\alpha + |\beta| = k} (\langle D_t^{\alpha} D_x^{\beta} f \rangle_{\mu/2,t;\Xi} + \langle D_t^{\alpha} D_x^{\beta} f \rangle_{\mu,x;\Xi}). 
		\nonumber 
\end{align*}
We say that $f \in C^{k+\mu}_{\rm para}(\Xi;\mathbb{R}^m)$ if all derivatives $D_t^{\alpha} D_x^{\beta} f(t,x)$ exists on $\Xi$ for $2\alpha + |\beta| \leq k$ and $\|f\|_{C^{k+\mu}_{\rm para}(\Xi')} < \infty$ for every $\Xi' \subset \subset \Xi$.
\end{definition}

\begin{remark} {\rm
We will sometimes refer to $C^{k+\mu}_{\rm para}(\Xi;\mathbb{R}^m)$ as \textit{parabolic $C^{k+\mu}(\Xi;\mathbb{R}^m)$}.  We let $C^{k+\mu}_{\rm para}(\Xi) = C^{k+\mu}_{\rm para}(\Xi;\mathbb{R})$.  Similar conventions will also be used for the parabolic $C^{k+\mu}$ spaces defined in Definitions \ref{parabolic_side_defn}--\ref{parabolic_mnfd_defn} below. 
} \end{remark}

We want to consider parabolic $C^{k+\mu}$ regularity of functions up to portions of the boundary of $\Xi$.  In particular, we are interested in portions of the side of $\Xi$ where locally each time-slice of $\Xi$ has a submanifold boundary flowing regularly in time and we will not be interested in initial or final conditions.  For this we need the following. 

\begin{definition} \label{parabolic_side_defn}
Let $k \geq 1$ be an integer, $\mu \in (0,1)$, $\Xi$ be an open subset of $\mathbb{R} \times \mathbb{R}^n$, and $(t_0,x_0) \in \partial \Xi$.  When $n \geq 2$, we say that $\Xi$ is $C^{k+\mu}_{\rm para}$ near $(t_0,x_0)$ if there exists $\delta > 0$, a rotation $Q$ of $\mathbb{R}^n$, and $f \in C^{k+\mu}_{\rm para}((t_0-\delta,t_0+\delta) \times B^{n-1}_{\delta}(0))$ such that $f(t_0,0) = 0$, $D_{x'} f(t_0,0) = 0$, and 
\begin{equation*}
	\Xi \cap (\{t\} \times B_{\delta}(x_0)) = x_0 + Q \, \{ x \in B_{\delta}(0) : x_n < f(t,x') \} \text{ for all } t \in (t_0-\delta,t_0+\delta),
\end{equation*}
where we recall from Section \ref{sec:notation_subsec} that $x' = (x_1,\ldots,x_{n-1})$.  Similarly, when $n = 1$, we say $\Xi$ is $C^{k+\mu}_{\rm para}$ near $(t_0,x_0)$ if there exists $\delta > 0$ and a function $f : (t_0-\delta,t_0+\delta) \rightarrow \mathbb{R}$ in $C^{k/2,\mu/2}$ if $k$ is even and in $C^{(k-1)/2,(1+\mu)/2}$ if $k$ is odd such that $f(t_0) = x_0$ and 
\begin{equation*}
	\Xi \cap (\{t\} \times (x_0-\delta,x_0+\delta)) = \{ x \in (x_0-\delta,x_0+\delta) : x < f(t) \}  \text{ for all } t \in (t_0-\delta,t_0+\delta).
\end{equation*}

We say that $S \subseteq \partial \Xi$ is a $C^{k+\mu}_{\rm para}$ portion of the side of $\Xi$ if $\Xi$ is $C^{k+\mu}_{\rm para}$ near each point $(t_0,x_0) \in S$. 
\end{definition}

\begin{definition}
Let $\Xi$ be an open subset of $\mathbb{R} \times \mathbb{R}^n$ and let $S$ be a $C^{k+\mu}_{\rm para}$ portion of the side of $\Xi$.  Given an integer $k \geq 0$ and $\mu \in (0,1)$, we say a function $f : \Xi \cup S \rightarrow \mathbb{R}^m$ is in $C^{k+\mu}_{\rm para}(\Xi \cup S;\mathbb{R}^m)$ if all derivatives $D_t^{\alpha} D_x^{\beta} f(t,x)$ exists on $\Xi$ for $2\alpha + |\beta| \leq k$ and $\|f\|_{C^{k+\mu}_{\rm para}(\Xi')} < \infty$ for every open set $\Xi' \subseteq \Xi$ such that the closure of $\Xi'$ is relatively compact in $\Xi \cup S$. 
\end{definition}

\begin{definition} \label{parabolic_mnfd_defn}
Let $k \geq 1$ be an integer and $\mu \in (0,1)$.  Let $I$ be a bounded open interval of $\mathbb{R}$ and $\mathcal{O}$ be an open subset of $\mathbb{R}^{n+m}$.  We say a one-parameter family of $n$-dimensional submanifolds $\{M_t\}_{t \in I}$ of $\mathcal{O}$ is $C^{k+\mu}_{\rm para}$ if for each $t_0 \in I$ and $X_0 \in M_{t_0}$, there exists $\delta > 0$, a rotation $Q$ of $\mathbb{R}^{n+m}$, and a function $u \in C^{k+\mu}_{\rm para}((t_0-\delta,t_0+\delta) \times B_{\delta}(0); \mathbb{R}^m)$ such that 
\begin{equation*}
	M_t \cap B_{\delta}^{n+m}(X_0) = X_0 + Q \, (\{ (x,u(t,x)) : x \in B_{\delta}(0) \} \cap B^{n+m}_{\delta}(0)) \text{ whenever } |t-t_0| < \delta. 
\end{equation*}
Note that here we may allow $n = 0$ so that when $X_t$ is a point of $\mathcal{O}$ for each $t \in I$, the one parameter family of points $\{X_t\}_{t \in I}$ is $C^{k+\mu}_{\rm para}$ if the map $t \in I \mapsto X_t$ is $C^{k/2,\mu/2}$ if $k$ is even and is $C^{(k-1)/2,(1+\mu)/2}$ if $k$ is odd.

We say a one-parameter family of $n$-dimensional submanifolds-with-boundary $\{M_t\}_{t \in I}$ of $\mathcal{O}$ is $C^{k+\mu}_{\rm para}$ if for each $t_0 \in I$ and $X_0 \in \overline{M_{t_0}} \cap \mathcal{O}$, there exists $\delta > 0$, an open set $\Xi \subset (t_0-\delta,t_0+\delta) \times B_{\delta}(0)$ such that $S = \partial \Xi \cap (t_0-\delta,t_0+\delta) \times B_{\delta}(0)$ is empty or a $C^{k+\mu}_{\rm para}$ portion of the side of $\Xi$, a rotation $Q$ of $\mathbb{R}^{n+m}$, and a function $u \in C^{k+\mu}_{\rm para}(\Xi \cup S; \mathbb{R}^m)$ such that 
\begin{equation*}
	\overline{M_t} \cap B^{n+m}_{\delta}(X_0) = X_0 + Q \, (\{ (x,u(t,x)) : (t,x) \in \Xi \cup S \} \cap B^{n+m}_{\delta}(0)) \text{ whenever } |t-t_0| < \delta. 
\end{equation*}
\end{definition}

\section{Statement of main results and applications} \label{sec:results_sec}

\subsection{Main results} We restate our main regularity result for stationary varifolds, Theorem A of the introduction, as follows:

\begin{theorem} \label{mainthm1}
Let $(N,g)$ be an $(n+l)$-dimensional, smooth (real-analytic), Riemannian manifold, $Z \in N$, and $\mathcal{O}$ be an open neighborhood of $Z$ in $N$.  Let $\mu \in (0,1)$ and $q \geq 3$.  Let $V$ be an $n$-dimensional integral varifold of the form \eqref{V_form} for positive integer multiplicities $\theta_k$ and distinct submanifolds $C^{1,\mu}$ embedded $n$-dimensional submanifold-with-boundary $M_k$ of $\mathcal{O}$ with common boundary $\Gamma$ containing $Z$.  Suppose that the interior of each $M_k$ is a minimal submanifold, \eqref{mainthm_tangents} holds true, and the submanifolds $M_k$ are not all tangent to the same $n$-dimensional plane at $Z$.  Then for some open neighborhood $\mathcal{O}' \subset \mathcal{O}$ of $Z$, $M_k$ are smooth (real-analytic) submanifolds-with-boundary of $\mathcal{O}'$ and $\Gamma$ is a smooth (real-analytic) $(n-1)$-dimensional submanifold of $\mathcal{O}'$. 
\end{theorem}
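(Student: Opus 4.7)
The plan is to recast the free-boundary problem as a boundary value problem for a quasilinear elliptic system on a fixed half-space via a partial hodograph transformation, verify the Lopatinski--Shapiro complementing condition, and conclude using Agmon--Douglis--Nirenberg regularity together with Morrey's analytic regularity theorem for the real-analytic case. After translating $Z$ to the origin, using normal coordinates on $N$ to identify a neighborhood of $Z$ with an open set of $T_Z N \cong \R^{n+l}$, and rotating so that $T_Z \Gamma = \R^{n-1} \times \{0\}^{l+1}$, each tangent plane $T_Z M_k$ contains $T_Z \Gamma$ and therefore picks out a unit direction in the $(l+1)$-dimensional complement; the non-tangency hypothesis guarantees that not all these directions are parallel. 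Writing each $M_k$ locally as the graph of a $C^{1,\mu}$ map $u_k : \overline{H_k} \to \R^l$ over a half-disc $H_k$ in its own tangent plane, the boundary traces all parametrize the same unknown $(n-1)$-submanifold $\Gamma$.

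Next I would apply a partial hodograph transform, one $M_k$ at a time, to pull the free boundary onto the fixed hyperplane $\{y_n = 0\}$, producing a new dependent variable $w_k : \{y_n \geq 0\} \to \R^l$ of the same $C^{1,\mu}$ regularity. The minimal-surface equation for $M_k$ becomes a second-order quasilinear diagonal elliptic system for $w_k$, and the interior systems decouple across $k$; the coupling lives entirely in the boundary conditions on $\{y_n = 0\}$, namely the matching conditions asserting that each trace $w_k|_{\{y_n = 0\}}$ parametrizes the common $\Gamma$ (equivalently, the $q$ boundary traces are identified with a single auxiliary unknown graph $\psi : \R^{n-1} \to \R^{l+1}$ for $\Gamma$) together with the single vector identity \eqref{mainthm_tangents} for the conormals pulled back to the boundary. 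The total count of boundary conditions then matches the order of the system in the ADN sense.

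The main obstacle is verifying the Lopatinski--Shapiro complementing condition for the linearized boundary-value system at $Z$. Freezing coefficients and applying the tangential Fourier transform at an arbitrary nonzero frequency $\xi' \in \R^{n-1}$, the problem reduces to an ODE system on $\{y_n \geq 0\}$ whose bounded solutions for each $k$ form an $l$-dimensional space of exponentials; I must show that after imposing the matching conditions and the linearized conormal balance, only the trivial bounded solution survives. The matching conditions couple the $w_k$'s and $\psi$ linearly, and the conormal-balance equation pins them down through a block matrix whose non-degeneracy is equivalent to the unit directions selecting the $T_Z M_k$ within $(T_Z \Gamma)^\perp \cap T_Z N$ not all being parallel --- precisely the non-tangency hypothesis, and where the assumption $q \geq 3$ is needed to exclude the two-sheet case that could be realigned into a single smooth manifold. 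This is the genuinely matrix-valued analogue of the scalar computation carried out in the codimension-one case of~\cite{Krum}.

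Once the complementing condition is verified, Schauder-type ADN estimates applied to the linearization around the given $C^{1,\mu}$ solution upgrade $(w_k, \psi)$ from $C^{1,\mu}$ to $C^{2,\mu}$, and iterating this bootstrap yields $C^{k,\mu}$ for every $k$, hence smoothness; inverting the hodograph restores smoothness of $M_k$ and $\Gamma$ in the original coordinates. When $N$ is real-analytic, the transformed system has coefficients that are real-analytic functions of $(w_k, \grd w_k)$, so Morrey's theorem on real-analytic regularity for elliptic systems satisfying the complementing condition yields real-analyticity of $(w_k, \psi)$, hence of $M_k$ and $\Gamma$.
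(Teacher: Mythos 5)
Your overall strategy (partial hodograph transform, ADN-type Schauder estimates, Morrey for analyticity, with the complementing condition hinging on the non-tangency hypothesis) is the same as the paper's, but two steps as you describe them do not go through. First, the hodograph step: you propose to flatten the free boundary ``one $M_k$ at a time,'' but a partial hodograph transformation requires a scalar function, built from the solutions themselves, that vanishes exactly on the free boundary and has nonvanishing transversal derivative; a single sheet supplies no such function, since $u_k$ does not vanish on $\gamma$. The paper instead uses the difference $w = u_1^1 - u_2^1$ of \emph{two} sheets, which vanishes on $\gamma$ by \eqref{coincide} and has $D_{x_n}w(0) \neq 0$ by \eqref{twoplaneassumption} (this is where the choice of two sheets with distinct tangent planes enters), and applies this one transformation to all sheets (with the shear $x_n = \psi(y) - Cy_n$ for the sheets over $\Omega_-$), so that all unknowns live on a single fixed half-space, the matching and conormal conditions become pointwise boundary relations, and --- crucially --- $\psi$ becomes an \emph{interior} unknown solving one of the transformed elliptic equations \eqref{mse_trans}. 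In your setup $\psi$ is a function of $y'$ only, defined on $\{y_n = 0\}$ and satisfying no interior equation, so it is not an unknown of an ADN-type system of the form \eqref{system}, and the Schauder/Morrey results you invoke do not apply to it as stated; moreover, if each sheet were straightened by its own transformation, the boundary conditions would couple the unknowns through compositions with unknown maps rather than through local boundary operators, which again falls outside the framework you cite.

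Second, the complementing condition, which is the heart of the theorem, is asserted rather than verified: you claim the non-degeneracy of the relevant block matrix ``is equivalent to'' the non-tangency hypothesis, but that equivalence is exactly what must be proved and is not obvious in codimension $l>1$. In the paper, after inserting the decaying exponentials, the linearized matching conditions \eqref{coincide_lin} reduce all amplitudes to linear combinations of three scalars $c_1^1, c_2^1, c_1^2$ (after a rotation of the codimension directions), and the linearized conormal balance \eqref{tangents_lin2a}--\eqref{tangents_lin2b} then gives a $3\times 3$ system whose determinant is shown to be strictly positive by an explicit expansion and a Cauchy--Schwarz argument, with equality forcing all slopes $a_k$ to coincide, contradicting non-tangency; this computation is precisely what rules out extra null directions coming from the higher codimension. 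Without carrying out this reduction and positivity argument (and, more minor, without actually checking that the number of boundary conditions matches $\tfrac12\sum_j(s_j+t_j)$ under the weights $t=2$, $r=-1$ for the conormal balance and $r=-2$ for the matching conditions), the proposal leaves the decisive step of the proof unproven.
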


\begin{remark} {\rm
Observe that when $V$ is of the form \eqref{V_form} and $M_k$ are $C^{1,\mu}$ submanifolds-with-boundary, $V$ is stationary if and only if each $M_k$ is minimal and \eqref{mainthm_tangents} holds true.  To see this, suppose $V$ is stationary.  Then clearly the interior of each $M_k$ is a smooth minimal submanifold.  Moreover, at each point $Y \in \Gamma$, $M_k$ is tangent to a stationary sum of $n$-dimensional half-planes with a common boundary, so \eqref{mainthm_tangents} holds true.  Conversely, if each $M_k$ is a minimal submanifold and \eqref{mainthm_tangents} holds true, by Theorem \ref{mainthm1} each $M_k$ is a smooth submanifold-with-boundary, so by the discussion in Section 2.3, $V$ is stationary. 
} \end{remark}

Our second main result concerns Brakke flows.  Let $I$ be a open interval in $\mathbb{R}$ and $\mathcal{O}$ be a connected open subset of $N$.  A one-parameter family of integral $n$-dimensional varifolds $\{V_t\}_{t \in I}$ is said to be a \textit{Brakke flow} in $\mathcal{O}$ if 
\begin{align} \label{brakkeflowdefn}
	&\int_{\mathcal{O}} \phi(t_1,X) \, d\|V_{t_1}\|(X) - \int_N \phi(t_0,X) \, d\|V_{t_0}\|(X) \\
	&\hspace{10mm} \leq \int_{t_0}^{t_1} \mathcal{B}(V_t,\phi(t,\cdot\,)) \, dt + \int_{t_0}^{t_1} \int_N \frac{\partial \phi}{\partial t}(t,X) \, d\|V_t\|(X) \, dt \nonumber 
\end{align}
for all $t_0 < t_1$ in $I$ and all $\phi \in C^1(I \times \mathcal{O};\mathbb{R}_+)$ such that $\phi(t,\cdot\,)$ has compact support for all $t \in I$, where 
\begin{equation*}
	\mathcal{B}(V_t,\phi) = \int_{\mathcal{O}} (-|H_t|^2 \, \phi + H_t \cdot \nabla \phi) d\|V_t\|
\end{equation*}
for all $\phi \in C^1(\mathcal{O})$ whenever $V_t$ has generalized mean curvature $H_t \in L^2_{\text{loc}}(\|V_t\|)$ and $\mathcal{B}(V_t,\phi) = -\infty$ otherwise.  Note that \eqref{brakkeflowdefn} implies that $V_t$ has generalized mean curvature $H_t \in L^2_{\text{loc}}(\|V_t\|)$ for a.e.~$t \in I$.  We now restate our main regularity result for Brakke flows, Theorem B of the introduction, as follows:

\begin{theorem} \label{mainthm2}
Let $(N,g)$ be an $(n+l)$-dimensional, smooth (real-analytic), Riemannian manifold, $Z \in N$, $\mathcal{O}$ be an open neighborhood of $Z$ in $N$, and $I$ be an open interval in $\mathbb{R}$ containing the origin.  Let $\mu \in (0,1)$ and $q \geq 3$.  Let $\{V_t\}_{t \in I}$ be a one parameter family of $n$-dimensional integral varifolds of the form 
\begin{equation} \label{V_form_mcf}
	V_t = \sum_{k=1}^q \theta_k \, |M_{k,t}|
\end{equation}
for positive integer multiplicities $\theta_k$ and some submanifolds-with-boundary $M_{k,t}$ in $\mathcal{O}$ such that $\{M_{k,t}\}_{t \in I}$ is $C^{2+\mu}_{\rm para}$ and $M_{k,t}$ have a common boundary $\Gamma_t$ with $Z \in \Gamma_0$.  Suppose the interior of $M_{k,t}$ is a smooth mean curvature flow and 
\begin{equation} \label{mainthm2_tangents}
	\sum_{k=1}^q \theta_k \, \eta_{k,t} = 0 \text{ on } \Gamma_t \text{ for all } t \in I, 
\end{equation}
where $\eta_{k,t}$ denote the outward unit conormal vector to the boundary of $M_{k,t}$.  Further suppose $M_{0,k}$ are not all tangent to the same $n$-dimensional plane at $Z$.  Then for some open neighborhoods $I' \subset I$ of $0$ and $\mathcal{O}' \subset \mathcal{O}$ of $Z$, $\{M_{k,t}\}_{t \in I'}$ and $\{\Gamma_t\}_{t \in I'}$ are smooth (second Gevrey) in $I' \times \mathcal{O}'$.  Moreover, for each $t \in I'$, the time-slices $M_{k,t}$ and $\Gamma_t$ are smooth (real analytic) in $\mathcal{O}'$. 
\end{theorem}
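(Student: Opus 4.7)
\textbf{Proof proposal for Theorem \ref{mainthm2}.}

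The plan is to follow the strategy used for Theorem \ref{mainthm1}, but with parabolic Schauder theory of Solonnikov \cite{Parabolic} in place of the Agmon--Douglis--Nirenberg elliptic theory, and with Theorem \ref{parareg_thm} in place of Morrey's analyticity theorem. First I would localize near $(0,Z)$: using the $C^{2+\mu}_{\rm para}$ regularity, choose coordinates on $N$ so that $Z = 0$, $T_0 \Gamma_0 = \mathbb{R}^{n-1} \times \{0\}^{l+1}$, and each $M_{k,t}$ is, for small $|t|$, the graph over its tangent half-plane $H_k$ at $(0,Z)$ of a $C^{2+\mu}_{\rm para}$ function $u_k$ vanishing to first order at the origin. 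The non-tangency hypothesis ensures that the half-planes $H_1,\dots,H_q$ do not all coincide; after relabeling, $H_1 \neq H_2$.

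Next I would perform a partial hodograph transformation adapted to the time-dependent free boundary $\Gamma_t$. Namely, use the graph parametrization of $M_{1,t}$ to introduce new coordinates $(\tau,y) = (t, x', u_1(t,x',x_n))$ in which $\{x_n \geq 0\} \cap M_{1,t}$ becomes $\{y_n = 0\}$, so that $\Gamma_t$ is mapped to the fixed half-space boundary $\{y_n = 0\}$. Each of the remaining $M_{k,t}$ for $k \geq 2$ then becomes a graph $y_n = v_k(\tau, y)$ (together with normal components) over the half-space $\{y_n \geq 0\}$, with $v_k(\tau,y',0) = 0$. The mean curvature flow equations for each $M_{k,t}$ transform into a coupled quasilinear parabolic system for $(u_1, v_2, \dots, v_q)$, and the conormal balance \eqref{mainthm2_tangents} becomes a nonlinear boundary condition on $\{y_n = 0\}$, supplementing the Dirichlet-type conditions $v_k|_{y_n=0} = 0$. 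The coefficients of this system are $C^\mu_{\rm para}$ by the initial regularity hypothesis.

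The main step is to verify that this parabolic system with these boundary operators satisfies the parabolic complementing (Lopatinskii--Shapiro) condition, so that Solonnikov's theory applies. I would linearize at the flat configuration: the principal parts of the bulk equations are just decoupled heat operators $\partial_\tau - \Delta_y$ acting on each $v_k$ (and on $u_1$), the Dirichlet conditions $v_k|_{y_n=0}=0$ are trivially complementing, and the conormal condition linearizes to a relation on the normal derivatives $\partial_{y_n} v_k|_{y_n = 0}$ weighted by the multiplicities $\theta_k$. The non-tangency of the half-planes $H_k$ is exactly the condition that guarantees this linearized boundary operator is complementing with respect to the heat operator in the parabolic sense --- this is the parabolic analogue of the algebraic check carried out for the elliptic system in \cite{Krum} and in Section \ref{sec:elliptic_section}, and I would adapt that computation by using parabolic homogeneity (weighting $\tau$ with order $2$). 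Once complementing is established, Solonnikov's estimates together with a standard bootstrap (Schauder estimates, then differentiating the system and recovering $C^{k+\mu}_{\rm para}$ estimates for all $k$) show that $u_1$ and each $v_k$ are $C^\infty_{\rm para}$ up to $\{y_n=0\}$ on a neighborhood of the origin. Inverting the hodograph map, which is $C^\infty_{\rm para}$ since $u_1$ is, gives that $\{\Gamma_t\}$ and $\{M_{k,t}\}$ are $C^\infty_{\rm para}$ in some $I' \times \mathcal{O}'$.

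Finally, in the real-analytic case I would apply Theorem \ref{parareg_thm} to the hodograph-transformed system, whose coefficients are real-analytic once $N$ is. This yields second Gevrey regularity of $u_1$ and $v_k$ in $(\tau,y)$ and real-analyticity of their time slices, which transfers back through the hodograph map to give second Gevrey regularity of $\{\Gamma_t\}_{t \in I'}$ and $\{M_{k,t}\}_{t \in I'}$ and real-analyticity of each time-slice. The principal obstacle throughout is the complementing condition verification, because one must track how the multiplicities $\theta_k$, the distinct tangent half-planes $H_k$, and the normal-component coupling interact in codimension $l > 1$; everything else is a parabolic analogue of the elliptic argument, with care taken that the parabolic Hölder spaces of Definition \ref{parareg_defn} and the Gevrey bookkeeping use the inequality \eqref{parareg_combo} to handle the anisotropic weighting between time and space derivatives.
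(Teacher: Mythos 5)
Your high-level plan --- partial hodograph transformation to flatten $\Gamma_t$, parabolic Schauder theory of Solonnikov in place of Agmon--Douglis--Nirenberg, and Theorem \ref{parareg_thm} in place of Morrey for the Gevrey step --- is exactly the strategy the paper uses, and you correctly identify the verification of the complementing condition as the crux.

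However, the partial hodograph transformation you describe is not the one that works, and getting it right is essential. You propose new coordinates $(\tau,y) = (t, x', u_1(t,x',x_n))$, i.e.~replacing $x_n$ by $u_1$. This has two fatal problems. First, in codimension $l > 1$ the function $u_1$ is $\mathbb{R}^m$-valued, so it cannot serve as a single scalar coordinate $y_n$. Second, and more importantly, $u_1$ does not vanish on $\Gamma_t$, so $\Gamma_t$ is \emph{not} mapped to $\{y_n = 0\}$; the whole point of the Kinderlehrer--Nirenberg--Spruck transform is to exploit a quantity that vanishes on the free boundary. The correct choice, following~\cite{KNS} and \cite{Krum} and used in the paper, is the scalar $w(t,x) = u_1^1(t,x) - u_2^1(t,x)$: since $u_1 = u_2$ on $\gamma_t$, the set $\gamma_t$ is carried to $\{y_n = 0\}$, and since the non-tangency hypothesis allows one to rotate so that $D_{x_n}u_1^1(0,0) > D_{x_n}u_2^1(0,0)$, the map $x \mapsto (x',w)$ is a local diffeomorphism. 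With $w$ in hand one further needs the separate inverse transformation $x_n = \psi(\tau,y) - Cy_n$ for the sheets lying over the other side $\Omega_{t,-}$, which your writeup does not address.

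Consequently the boundary conditions in the transformed system are also misstated. After the correct transform the boundary data on $\{y_n=0\}$ are the matching conditions that the various $\phi_k^{\kappa}$ coincide (equations of the form $\phi_k^1 = \phi_2^1$ and $\phi_k^\kappa = \phi_1^\kappa$ for $\kappa \geq 2$) together with the transformed conormal balance, not Dirichlet conditions $v_k = 0$. And the principal parts of the linearized interior equations are not plain heat operators $\partial_\tau - \Delta_y$: the hodograph change of variables stretches the $y_n$ direction, producing coefficients such as $|a_1-a_2|^2/(1+|a_k|^2)$ in front of $D_{y_n y_n}$, and it is precisely these coefficients that enter the determinant computation certifying the complementing condition. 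That determinant computation is where the multiplicities $\theta_k$ and the distinct slopes $a_k$ genuinely interact in codimension $l>1$; the paper shows it reduces to the same algebraic identity verified in the elliptic case in Section \ref{sec:checking_mse_subsec}, but it must be carried out with the correct transformed operators. Your proposal names the right target (the parabolic Lopatinskii--Shapiro condition) but the incorrect hodograph map means the system you would actually be analyzing is not the one for which that verification has been done.
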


\begin{remark} {\rm
Observe that if $\{V_t\}_{t \in I}$ is of the form \eqref{V_form_mcf}, then $\{V_t\}_{t \in I}$ is a Brakke flow if and only if the interior of $\{M_{k,t}\}_{t \in I}$ is a smooth mean curvature flow and \eqref{mainthm2_tangents} holds true.  To see this, suppose $\{V_t\}_{t \in I}$ is a Brakke flow.  Then clear $\{M_{k,t}\}_{t \in I}$ flows by mean curvature.  Moreover, since $V_t$ has generalized mean curvature for a.e.~$t \in I$, \eqref{mainthm2_tangents} holds true.  Conversely, we want to show that if $\{M_{k,t}\}_{t \in I}$ is a smooth mean curvature flow and \eqref{mainthm2_tangents} holds true then 
\begin{equation} \label{mcf_remark_eqn1}
	\frac{d}{dt} \int_{\mathcal{O}} \phi \, d\|V_t\|(X) 
	= \sum_{k=1}^q \int_{M_{k,t}} \left( \frac{\partial \phi}{\partial t} + \nabla \phi \cdot H_{k,t} - \phi \, |H_{k,t}|^2 \right) d\mathcal{H}^n 
\end{equation}
for every $\phi \in C^1(I \times \mathcal{O};\mathbb{R}_+)$ such that $\phi(t,\cdot\,)$ has compact support $K_t$ for all $t \in I$, where $H_{k,t}$ denotes the mean curvature of $M_{k,t}$.  \eqref{brakkeflowdefn} will then follow by integrating \eqref{mcf_remark_eqn1} over $t \in (t_0,t_1)$.  Observe that by Theorem \ref{mainthm2}, $\{M_{k,t}\}_{t \in I}$ is a family of smooth submanifolds-with-boundary.  Thus, by using a partition of unity to localize \eqref{mcf_remark_eqn1} and rescaling, it suffices to prove \eqref{mcf_remark_eqn1} in the special case where $I = (-1,1)$, $\mathcal{O} = B^{n+m}_1(0)$, and there exists a smooth family of embeddings $X_k : I \times (B_1(0) \cap \{x_n \geq 0\}) \rightarrow \mathbb{R}^{n+m}$ such that 
\begin{gather*}
	M_{k,t} = \op{Image}(X_k(t,\cdot\,)) \cap \mathcal{O}, \quad \Gamma_t = \op{Image}(X_k(t,\cdot\,) |_{\{x_n=0\}}) \cap \mathcal{O}, \\
	X_1(t,x) = X_2(t,x) = \cdots = X_q(t,x) \text{ in } B_1(0) \cap \{x_n=0\}
\end{gather*} 
for all $t \in I$.  Define the velocity vector field $\zeta_{k,t} : M_{k,t} \rightarrow \mathbb{R}^{n+m}$ by $\zeta_{k,t}(X_{k,t}(x)) = (\partial X_k/\partial t)(x)$ for all $k = 1,2,\ldots,q$, $t \in I$, and $x \in B_1(0) \cap \{x_n \geq 0\}$ and note that 
\begin{equation} \label{mcf_remark_eqn2}
	\zeta_{1,t}(x',0) = \zeta_{2,t}(x',0) = \cdots = \zeta_{q,t}(x',0) \text{ for all } t \in I, \, x' \in B^{n-1}(0). 
\end{equation}
By the first variational formula for area and the divergence theorem, 
\begin{align*}
	&\frac{d}{dt} \int_{\mathcal{O}} \phi(t,X) \, d\|V_t\|(X) 
	= \sum_{k=1}^q \int_{M_{k,t}} \left( \frac{\partial \phi}{\partial t} + \nabla \phi \cdot \zeta_{k,t} + \phi \op{div}_{M_{k,t}} \zeta_{k,t} \right) \theta_k \, d\mathcal{H}^n 
	\\& \hspace{15mm} = \sum_{k=1}^q \int_{M_{k,t}} \left( \frac{\partial \phi}{\partial t} + \nabla \phi \cdot \zeta_{k,t}^{\perp} 
		+ \op{div}_{M_{k,t}}(\phi \, \zeta_{k,t}) \right) \theta_k \, d\mathcal{H}^n 
	\\& \hspace{15mm} = \sum_{k=1}^q \int_{M_{k,t}} \left( \frac{\partial \phi}{\partial t} + \nabla \phi \cdot \zeta_{k,t}^{\perp} 
		- \phi \, H_{k,t} \cdot \zeta_{k,t} \right) \theta_k \, d\mathcal{H}^n 
		+ \int_{\Gamma_t} \sum_{k=1}^q \phi \, \eta_{k,t} \cdot \zeta_{k,t} \, \theta_k \, d\mathcal{H}^{n-1}, 
\end{align*}
for all $t \in I$, where $\zeta_{k,t}^{\perp}$ denotes the projection of $\zeta_{k,t}$ onto the normal bundle of $M_{k,t}$.  Since $\{M_{k,t}\}_{t \in I}$ flows by mean curvature, $\zeta_{k,t}^{\perp} = H_{k,t}$ on $M_{k,t}$ for all $t \in I$.  By \eqref{mainthm2_tangents} and \eqref{mcf_remark_eqn2}, $\sum_{k=1}^q  \eta_{k,t} \cdot \zeta_{k,t} \, \theta_k = 0$ on $\Gamma_t$ for all $t \in I$.  Therefore \eqref{mcf_remark_eqn1} holds true. 
} \end{remark}

\begin{remark} {\rm 
For parabolic problems such as mean curvature flow, we do not expect solutions to be real-analytic in time-space.  For instance, a well-known example due to Kowalevsky~\cite{Kow75} shows that the solution $u$ to the heat equation $\partial u/\partial t = \partial^2 u/\partial x^2$ on $\mathbb{R}^2$ with initial condition $u = (1-x)^{-1}$ at $t = 0$ is not real analytic in $(t,x)$.  Hence we do not generally expect $\Gamma_t$ and $M_{k,t}$ to be real-analytic in time-space. 
} \end{remark}

\begin{remark} {\rm 
A standard approach to proving regularity for minimal submanifolds and Brakke flows is to first use blow-up methods to establish $C^{1+\mu}$ regularity, see for instance~\cite{DeGiorgiReg}, ~\cite{AllardReg}, and~\cite{BrakkeReg1}.  For minimal submanifolds, starting from $C^{1,\mu}$ regularity, one can use the fact that the minimal surface system is in divergence form to establish $C^{2,\mu}$ regularity and then establish higher regularity via elliptic regularity, see the proof of Theorem \ref{mainthm1} below.  For Brakke flows, going from parabolic $C^{1+\mu}$ regularity to parabolic $C^{2+\mu}$ regularity tends to be more involved, see for instance~\cite{BrakkeReg2}.  In the special case $n = 1$, it was pointed out to us by Tonegawa that the mean curvature flow equation in $\mathbb{R}^{1+m}$ can be written in divergence form as 
\begin{equation*}
	D_t u = D_x (\arctan(D_x u))
\end{equation*}
and thus one can generalize the estimates of~\cite{Parabolic} to parabolic equations in divergence form using the ideas from~\cite[Lemma 9.1]{ADK1} and then apply our approach to show Theorem \ref{mainthm2} holds true when $M_{t,k}$ is parabolic $C^{1+\mu}$ up to $\Gamma$.  We will not address this issue further here; rather, we will simply assume parabolic $C^{2+\mu}$ regularity. 
} \end{remark}

\subsection{Application to branched minimal submanifolds}  An important corollary of Theorem \ref{mainthm1} arises from the work of Hughes of~\cite{Hughes}, which examined the structure of stationary Lipschitz two-valued graphs. 

Let $\mathcal{A}_2(\mathbb{R}^m)$ denote the space of unordered pairs $\{a_1,a_2\}$ for $a_1,a_2 \in \mathbb{R}^m$ not necessarily distinct.  We equip $\mathcal{A}_2(\mathbb{R}^m)$ with a metric 
\begin{equation*}
	\mathcal{G}(\{a_1,a_2\},\{b_1,b_2\}) = \min\{|a_1-b_1|+|a_2-b_2|, |a_1-b_2|+|a_2-b_1|\} 
\end{equation*}
for $a_1,a_2,b_1,b_2 \in \mathbb{R}^m$. 

Let $\Omega$ be an open subset of $\mathbb{R}^n$.  A two-valued function is a map $u : \Omega \rightarrow \mathcal{A}_2(\mathbb{R}^m)$ so that each each $x \in \Omega$, $u(x) = \{u_1(x),u_2(x)\}$ as an unordered pair.  We say a two-valued function $u : \Omega \rightarrow \mathcal{A}_2(\mathbb{R}^m)$ is Lipschitz if 
\begin{equation*}
	\mathcal{G}(u(x),u(y)) \leq L |x-y| \text{ for all } x,y \in \Omega 
\end{equation*}
for some constant $L \in [0,\infty)$.  The graph of a Lipschitz two-valued function $u : \Omega \rightarrow \mathcal{A}_2(\mathbb{R}^m)$ can be regarded as an integral varifold $V$ whose support is the rectifiable set $M$ consisting of all points $(x,u_1(x))$ and $(x,u_2(x))$ such that $x \in \Omega$ and whose multiplicity function $\theta : M \rightarrow \mathbb{Z}_+$ given by $\theta(x,u_1(x)) = \theta(x,u_2(x)) = 1$ if $u_1(x) \neq u_2(x)$ and $\theta(x,u_1(x)) = 2$ if $u_1(x) = u_2(x)$. 

In~\cite{Hughes}, Hughes considered the structure of a stationary graph $V$ of a Lipschitz two-valued function $u : B_2(0) \rightarrow \mathcal{A}_2(\mathbb{R}^m)$ such that $V$ is close to a stationary cone $C$ of one of three types: 
\begin{enumerate}
\item[(i)] $C$ is a sum of two $n$-dimensional planes whose intersection is an $(n-2)$-dimensional subspace, 
\item[(ii)] $C$ is a sum of two $n$-dimensional planes whose intersection is an $(n-1)$-dimensional subspace, and 
\item[(iii)] $C$ is a sum of four $n$-dimensional half-planes meeting along a common $(n-1)$-dimensional boundary axis but $C$ is not the sum of two $n$-dimensional planes intersecting along its axis, i.e.~ $m \geq 2$ and after an orthogonal change of coordinates 
\begin{equation*}
	C = (L_{0,-\theta} + L_{0,\theta} + L_{\phi,\pi-\theta} + L_{\phi,\pi+\theta}) \times \mathbb{R}^{n-1} \times \{0\}^{m-2}
\end{equation*}
for some $\phi \in (0,\pi)$ and $\theta \in (0,\pi/2)$, where 
\begin{equation*}
	L_{\phi,\theta} = |\{ (t \cos \theta, t \cos \phi \sin \theta, t \sin \phi \sin \theta) : t > 0 \}|
\end{equation*}
for each $\phi, \theta \in \mathbb{R}$. 
\end{enumerate}
The main results in each case (i), (ii), and (iii) is covered as Theorems 1, 2, and 3 of~\cite{Hughes} respectively.  In particular,~\cite[Theorem 3]{Hughes} states that in case (iii), the graph of $u$ is the union of four $C^{1,\mu}$ submanifolds-with-boundary $M_k$ meeting along a common boundary $\Gamma$ and each $M_k$ close to one of the four half-planes of $C$.  Observe that this conclusion is not true in cases (i) or (ii) since in codimension $> 1$ the graph of $u$ could be the sum two mutually disjoint, smoothly embedded minimal submanifolds, each close to one of the two planes of $C$.  As a consequence of~\cite[Theorem 3]{Hughes} and our Theorem \ref{mainthm1} above, we have the following: 

\begin{corollary} \label{hughes_cor}
Let $C$ be a minimal cone such that $C$ sum of four $n$-dimensional halfplanes meeting along a common boundary axis $A$ but $C$ is not the sum of two intersecting $n$-dimensional planes.  There exists $\varepsilon = \varepsilon(n,m) > 0$ such that if $V$ is stationary $n$-dimensional integral varifold in $\mathbb{R}^{n+m}$ represented as the graph of a Lipschitz two-valued function $u : B_2(0) \rightarrow \mathcal{A}_2(\mathbb{R}^m)$ such that 
\begin{equation*}
	\int_{B^{n+m}_2(0)} \op{dist}(X,\op{spt} \|C\|)^2 d\|V\|(X) + \int_{B^{n+m}_2(0) \cap \{\op{dist}(X,A) < 1/8\}} \op{dist}(X,\op{spt} \|C\|)^2 d\|V\|(X) < \varepsilon
\end{equation*}
then $V = \sum_{k=1}^4 |M_k|$ in $B_{1/2}(0)$ for some locally real-analytic $n$-dimensional submanifolds-with-boundary $M_k$ meeting along a common locally real-analytic boundary $\Gamma$ in $B_{1/2}(0)$. 
\end{corollary}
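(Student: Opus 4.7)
The plan is to combine Hughes's Theorem 3 of \cite{Hughes} with Theorem \ref{mainthm1} of this paper. By Hughes's theorem, for $\varepsilon = \varepsilon(n,m) > 0$ chosen sufficiently small, the stationary two-valued graph $V$ decomposes in $B_{1/2}(0)$ as $V = \sum_{k=1}^4 |M_k|$, where each $M_k$ is an embedded $C^{1,\mu}$ $n$-dimensional submanifold-with-boundary, the $M_k$ share a common $C^{1,\mu}$ boundary $\Gamma$, and each $M_k$ is $C^{1,\mu}$-close to one of the four distinct half-planes comprising $C$. The multiplicities are $\theta_k = 1$, $q = 4 \geq 3$, and since $V$ is stationary with $M_k$ of class $C^{1,\mu}$, the remark following Theorem \ref{mainthm1} yields that the interior of each $M_k$ is a minimal submanifold and that the conormal balancing condition \eqref{mainthm_tangents} holds along $\Gamma$.

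To invoke Theorem \ref{mainthm1} at a point $Z \in \Gamma \cap B_{1/2}(0)$, I would verify the remaining hypothesis: that the four $M_k$ are not all tangent to a single $n$-dimensional plane at $Z$. By assumption $C$ is not the sum of two intersecting $n$-planes, so its four half-planes do not all lie in a common $n$-plane. Shrinking $\varepsilon$ further if needed, the $C^{1,\mu}$-closeness provided by Hughes's theorem transfers this non-coplanarity to the tangent planes of the $M_k$ at \emph{every} point of $\Gamma \cap B_{1/2}(0)$, not merely at the origin; this is the key quantitative input I would draw from Hughes's construction. With all the hypotheses of Theorem \ref{mainthm1} now in force, and with $N = \mathbb{R}^{n+m}$ trivially real-analytic, I conclude that in a neighborhood of each such $Z$, the boundary $\Gamma$ is a real-analytic $(n-1)$-dimensional submanifold and each $M_k$ is real-analytic up to $\Gamma$. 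Interior real-analyticity of each $M_k$ follows from the classical regularity theory for minimal submanifolds, and a compactness/covering argument along $\overline{\Gamma} \cap \overline{B_{1/2}(0)}$ (after a mild shrinking of the ball if necessary) assembles these local statements into the global conclusion.

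The main obstacle, and indeed essentially the only substantive content, is the uniform propagation of $C^{1,\mu}$-closeness to $C$ along the entire boundary $\Gamma \cap B_{1/2}(0)$. Without this, Theorem \ref{mainthm1} could a priori fail at an exceptional point of $\Gamma$ where the four tangent planes degenerate into a configuration spanning a single $n$-plane. Once one extracts this uniform closeness from Hughes's theorem, the corollary is an essentially immediate consequence of Theorem \ref{mainthm1}.
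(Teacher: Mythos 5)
Your proposal takes essentially the same route as the paper: apply Hughes's Theorem 3 to get the decomposition into four $C^{1,\mu}$ submanifolds-with-boundary, then invoke Theorem \ref{mainthm1}. The paper's own proof is terser (it does not spell out the verification of the non-coplanarity hypothesis along $\Gamma$, nor the covering argument), but the additional details you supply are correct and are implicitly what makes the appeal to Theorem \ref{mainthm1} legitimate.
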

\begin{proof}
By~\cite[Theorem 3]{Hughes}, each $M_k$ is a $C^{1,\mu}$ submanifold-with-boundary.  Thus Theorem \ref{mainthm1} applies to conclude $M_k$ and $\Gamma$ are real-analytic. 
\end{proof}

\section{Regularity for minimal submanifolds} \label{sec:elliptic_section}

\subsection{Setup}  \label{sec:elliptic_setup_subsec}  In this section we will prove our main regularity result for minimal submainfolds, Theorem \ref{mainthm1}, by showing that $\Gamma$ and $M_k$ are smooth (real analytic) in some open neighborhood of $Z$.  Without loss of generality we can assume the following.  Recall that $N$ is smoothly embedded $(n+l)$-dimensional submanifold of $\mathbb{R}^{n+m}$.  Assume that $0 \in N$ and $N$ is tangent to $\mathbb{R}^{n+l} \times \{0\}$ at the origin.  Let $\Omega$ be a connected open set in $\mathbb{R}^n$ containing the origin.  Let $\gamma$ be an $(n-1)$-dimensional $C^1$ submanifold of $\Omega$ such that $0 \in \gamma$, $\gamma$ is tangent to $\mathbb{R}^{n-1} \times \{0\}$ at $0$, and $\Omega \setminus \gamma$ has exactly two connected components, $\Omega_+$ and $\Omega_-$, such that $(0,0,\ldots,0,1)$ points into $\Omega_+$ and out of $\Omega_-$ at the origin.  Let $1 \leq s \leq q$ be integers.  Let $M_k = \op{graph} u_k \subset N$ where $u_k \in C^1(\Omega_+ \cup \gamma;\mathbb{R}^m)$ for $k = 1,2,\ldots,s$ and $u_k \in C^1(\Omega_- \cup \gamma;\mathbb{R}^m)$ for $k = s+1,s+2,\ldots,q$.  Assume that 
\begin{equation} \label{coincide}
	u_1 = u_2 = \cdots = u_q 
\end{equation}
on $\gamma$ so that $M_k$ have a common boundary $\Gamma = \op{graph} u_1 |_{\gamma}$.  Since $0 \in \Gamma$ and $\Gamma$ is $C^1$, at the origin we may assume that 
\begin{equation} \label{gradatorigin}
	u_k(0) = 0, \hspace{10mm} D_{x_i} u_k(0) = 0 \text{ for } i = 1,\ldots,n-1,  
\end{equation}
for $k = 1,2,\ldots,q$.  Let $V$ be the $n$-dimensional integral varifold of the form \eqref{V_form} for some positive integers $\theta_k$ and for $M_k$ and $\Gamma$ as above.  We shall let $Z = 0$ and $\mathcal{O} = N \cap (\Omega \times \mathbb{R}^m)$ and assume that $N$, $V$, $M_k$, and $\Gamma$ satisfy the hypotheses of Theorem \ref{mainthm1}, in particular that each $M_k$ is a minimal submanifold for each $k$, \eqref{mainthm_tangents} holds true, and the submanifolds $M_k$ are not all tangent to the same $n$-dimensional plane at the origin. 

For each $X \in N$, let $A_X = (A_X^1,A_X^2,\ldots,A_X^{n+m}) : T_X N \rightarrow (T_X N)^{\perp} \subset \mathbb{R}^{n+m}$ denote the second fundamental form of $N$ at $X$ when $l < m$ and $A_X = 0$ when $l = m$.  Letting $H_k$ denote the mean curvature of $M_k$ as a submanifold of $\mathbb{R}^{n+m}$, $M_k$ being a minimal submanifold is equivalent to 
\begin{equation} \label{mse0} 
	H_k(X) = \sum_{i=1}^n A_X(\tau_i,\tau_i) 
\end{equation}
for all $X \in M_k$ and any orthonormal basis $\tau_1,\ldots,\tau_n$ for $T_X M_k$.  Let $G(p) = (G_{ij}(p))_{i,j=1,\ldots,n}$ be the $n \times n$ matrix given by 
\begin{equation*}
	G_{ij}(p) = \delta_{ij} + \sum_{\lambda=1}^m p_i^{\lambda} p_j^{\lambda} 
\end{equation*}
for $p \in \mathbb{R}^{mn}$ and $i,j = 1,\ldots,n$ and $G(p)^{-1} = (G^{ij}(p))_{i,j=1,\ldots,n}$.  Extend $A_X$ to a symmetric bilinear form on $\mathbb{R}^{n+m}$ such that $A_X(v,w) = 0$ whenever $v$ is normal to $N$.  Extend $A_{(x,z)}$ to a symmetric bilinear form for every $(x,z)$ in a neighborhood of the origin in $\mathbb{R}^{n+m}$ by letting $A_{(x,z)} = A_{(x,z+w)}$ for $(x,z) \in N$ and $w \in \{0\} \times \mathbb{R}^{m-l}$.  Observe that if $l < m$, $N$ is the graph of some smooth (real-analytic) function $f$ over a neighborhood of the origin in $\mathbb{R}^{n+l} \times \{0\}$ and so $A_{(x,z)}$ is well-defined and can be expressed in terms of $f$, $Df$, and $D^2 f$ at $(x,z_1,\ldots,z_l)$.  Define 
\begin{align*}
	\mathscr{H}^{\kappa}(x,z,p) =& \sum_{i,j=1}^n G^{ij}(p) \, A_{(x,z)}^{\kappa}((e_i,p_i),(e_j,p_j)) 
\end{align*}
for $(x,z) \in \mathbb{R}^{n+m}$ near the origin, $p \in \mathbb{R}^{mn}$, and $\kappa = 1,2,\ldots,n+m$, where $e_1,\ldots,e_n$ denotes the standard basis for $\mathbb{R}^n$.  We can rewrite \eqref{mse0} as 
\begin{equation} \label{mse} 
	\sum_{i,j=1}^n D_{x_i} \left( \sqrt{\det G(Du_k)} \, G^{ij}(Du_k) \, D_{x_j} u_k^{\kappa} \right) = \sqrt{\det G(Du_k)} \, \mathscr{H}^{n+\kappa}(x,u_k,Du_k) \end{equation}
on its domain $\Omega_+$ of $k \leq s$ and $\Omega_-$ if $k > s$, where $\kappa = 1,2,\ldots,m$.

Since \eqref{mainthm_tangents} holds true and $M_k$ are not all tangent to the same $n$-dimensional plane at the origin, we may assume that $q \geq 2$ and the unit normals to $M_1$ and $M_2$ are linearly independent at the origin.  After an orthogonal change of coordinates, we may assume that $s \geq 2$, 
\begin{gather}
	D_{x_n} u_1^1(0) > D_{x_n} u_2^1(0), \label{twoplaneassumption} \\
	D_{x_n} u_1^{\kappa}(0) = D_{x_n} u_2^{\kappa}(0) = 0 \text{ for } \kappa = 2,3,\ldots,m.  \label{twoplaneassumption2}
\end{gather}
By \eqref{mainthm_tangents}, $s < q$.

We want to express \eqref{mainthm_tangents} in terms of $u_1,u_2,\ldots,u_q$.  In the special case $l = 1$, this could be accomplished by using the fact that \eqref{mainthm_tangents} is equivalent to 
\begin{equation} \label{tangents_codim1}
	\sum_{k=1}^s \theta_k \nu_k - \sum_{k=s+1}^q \theta_k \nu_k = 0 \text{ on } \Gamma,  
\end{equation}
where $\nu_k$ is the unit normal to $M_k$ with $\nu_k^{n+1} > 0$ (when $l=m=1$, $\nu_k = (-Du_k,1)/\sqrt{1+|Du_k|^2}$).  When $l > 1$, the situation is more complicated.  Suppose that $\gamma = \{ (x',\psi(x',0)) : (x',0) \in S \}$ is the graph of some $C^1$ function $\psi : S \rightarrow \mathbb{R}$, where $S \subset \mathbb{R}^{n-1} \times \{0\}$ is an open neighborhood of the origin and we recall from Section \ref{sec:notation_subsec} that $x' = (x_1,\ldots,x_{n-1})$.  We will determine $\psi$ in Subsection \ref{sec:hodograph_subsec} below.  Then  \eqref{mainthm_tangents} is equivalent to 
\begin{equation} \label{tangents}
	\sum_{k=1}^s \theta_k \frac{(-D_{x'} \psi,1,D_{(-D_{x'} \psi,1)} u_k)}{\sqrt{1+|D_{x'} \psi|^2+|D_{(-D_{x'} \psi,1)} u_k|^2}} 
		- \sum_{k=s+1}^q \theta_k \frac{(-D_{x'} \psi,1,D_{(-D_{x'} \psi,1)} u_k)}{\sqrt{1+|D_{x'} \psi|^2+|D_{(-D_{x'} \psi,1)} u_k|^2}} = 0 
\end{equation}
at each $x = (x',\psi(x',0)) \in \gamma$, where $D\psi$ is evaluated at $(x',0)$.  By \eqref{coincide}, for $x = (x',\psi(x',0)) \in \gamma$ near the origin, $(-D\psi(x',0),1,$ $D_{(-D\psi(x',0),1)} u_k(x))$ lies in the $(m+1)$-dimensional subspace orthogonal to $\Gamma$ at $(x,u_1(x))$.  By the definition of $\gamma$ and \eqref{gradatorigin}, the orthogonal projection of the subspace orthogonal to $\Gamma$ at $(x,u_1(x))$ onto $\{0\} \times \mathbb{R}^{1+m}$ is bijective near the origin.  Thus by taking the $n,n+1,\ldots,n+m$ components of both sides of \eqref{tangents}, \eqref{tangents} is equivalent to 
\begin{equation} \label{tangents2}
	\sum_{k=1}^s \theta_k \frac{1}{\sqrt{1+|D_{x'} \psi|^2+|D_{(-D_{x'} \psi,1)} u_k|^2}} 
		- \sum_{k=s+1}^q \theta_k \frac{1}{\sqrt{1+|D_{x'} \psi|^2+|D_{(-D_{x'} \psi,1)} u_k|^2}} = 0 
\end{equation}
and 
\begin{equation} \label{tangents3}
	\sum_{k=1}^s \theta_k \frac{D_{(-D_{x'} \psi,1)} u_k^{\kappa}}{\sqrt{1+|D_{x'} \psi|^2+|D_{(-D_{x'} \psi,1)} u_k|^2}} 
		- \sum_{k=s+1}^q \theta_k \frac{D_{(-D_{x'} \psi,1)} u_k^{\kappa}}{\sqrt{1+|D_{x'} \psi|^2+|D_{(-D_{x'} \psi,1)} u_k|^2}} = 0 
\end{equation}
for $\kappa = 1,2,\ldots,m$ at each $x = (x',\psi(x',0)) \in \gamma$ near the origin.  By replacing $\Omega$ with a smaller neighborhood of the origin if necessary, assume \eqref{tangents2} and \eqref{tangents3} holds at every $x \in \gamma$. 

\subsection{Partial hodograph transformation}  \label{sec:hodograph_subsec}  Our goal is prove that $u_1,u_2,\ldots,u_q$ are smooth (real-analytic) functions up to the boundary $\gamma$ and $\gamma$ is a smooth (real-analytic) $(n-2)$-dimensional submanifold in $\Omega$.  We will use the partial hodograph transformation of Kinderlehrer, Nirenberg, and Spuck~\cite{KNS}.  Let $w = u_1^1 - u_2^1$.  Consider the transformation 
\begin{equation*}
	y_i = x_i \text{ for } i = 1,\ldots,n-1, \quad y_n = w(x) \quad \text{for } x \in \Omega_+ \cup \gamma.  
\end{equation*}
Let $U$ and $S$ denote the images of $\Omega_+$ and $\gamma$ respectively under this transformation and observe that $S \subseteq \{ y : y_n = 0\}$ by \eqref{coincide}.  By \eqref{twoplaneassumption}, $x \mapsto (x',w(x))$ is invertible near the origin and thus we may assume that $x \mapsto (x',w(x))$ is invertible on $\Omega_+ \cup \gamma$ with inverse transformation given by 
\begin{equation*}
	x_i = y_i \text{ for } i = 1,\ldots,n-1, \quad x_n = \psi(y) \quad \text{for } y \in U \cup S
\end{equation*}
for some function $\psi \in C^{1,\mu}(U \cup S) \cap C^{\infty}(U)$.  We also consider the transformation 
\begin{equation*}
	x_i = y_i \text{ for } i = 1,\ldots,n-1, \quad x_n = \psi(y) - Cy_n \quad \text{for } y \in U \cup S
\end{equation*}
for some constant $C > 0$ such that $D_{y_n} \psi < C$ on $U$.  By replacing $\Omega$ with a smaller open neighborhood of the origin if necessary, we may assume that $y \mapsto (y',\psi(y))$ is a bijection from $U \cup S$ to $\Omega_+ \cup \gamma$ and $y \mapsto (y',\psi(y) - Cy_n)$ is a bijection from $U \cup S$ to $\Omega_- \cup \gamma$.  

Let 
\begin{equation*}
	\phi_k(y) = u_k(y',\psi(y)) \text{ on } U \cup S \text{ for } k \leq s, \quad 
	\phi_k(y) = u_k(y',\psi(y)-Cy_n) \text{ on } U \cup S \text{ for } k < s. 
\end{equation*}
Observe that $\gamma = \{ (y,\psi(y)) : y \in S \}$.  Thus we may assume that \eqref{tangents2} and \eqref{tangents3} hold true with $\psi$ as in the transformation.  Moreover, Theorem \ref{mainthm1} will be proven if we can show that $\psi$ and $\phi_k$ are smooth (real-analytic) up to $S$ near the origin.

By the chain rule, using $x_i = y_i$ for $i = 1,\ldots,n-1$, $x_n = \psi(x)$, and $y_n = w(x)$, for $x \in \Omega_+ \cup \gamma$ we compute that 
\begin{align*}
	&D_{y_i} = D_{x_i} + D_{y_i} \psi \, D_{x_n} \text{ for } i = 1,\ldots,n-1, 
	&&D_{y_n} = D_{y_n} \psi \, D_{x_n}, 
\end{align*}
and so 
\begin{align} \label{transform_deriv}
	&D_{x_i} = D_{y_i} - \frac{D_{y_i} \psi}{D_{y_n} \psi} \, D_{y_n} \text{ for } i = 1,\ldots,n-1, 
	&&D_{x_n} = \frac{1}{D_{y_n} \psi} \, D_{y_n}, \\
	&D_{x_i} w = - \frac{D_{y_i} \psi}{D_{y_n} \psi} \text{ for } i = 1,\ldots,n-1, 
	&&D_{x_n} w  = \frac{1}{D_{y_n} \psi}. \nonumber
\end{align}
Similarly, using $x_i = y_i$ for $i = 1,\ldots,n-1$ and $x_n = \psi(y) - Cy_n$, $x \in \Omega_- \cup \gamma$ we compute that 
\begin{align*}
	&D_{y_i} = D_{x_i} + D_{y_i} \psi \, D_{x_n} \text{ for } i = 1,\ldots,n-1, 
	&&D_{y_n} = (D_{y_n} \psi - C) \, D_{x_n}, 
\end{align*}
and so 
\begin{align} \label{transform_deriv2}
	&D_{x_i} = D_{y_i} - \frac{D_{y_i} \psi}{D_{y_n} \psi - C} \, D_{y_n}
		\text{ for } i = 1,\ldots,n-1, 
	&&D_{x_n} = \frac{1}{D_{y_n} \psi - C} \, D_{y_n}. 
\end{align}

By \eqref{transform_deriv} and \eqref{transform_deriv2}, under the partial hodograph transformation \eqref{mse} transforms to a differential system in $\psi, \phi_1,$ $\phi_2, \phi_3, \ldots, \phi_q$ of the form 
\begin{align} \label{mse_trans} 
	&\sum_{i=1}^n D_{y_i} F_{1,\kappa}^i(D\psi, D\phi_2) + F_{1,\kappa}^0(y, \psi, \phi_2, D\psi, D\phi_2) = 0 \text{ in } U, \\
	&\sum_{i=1}^n D_{y_i} F_{k,\kappa}^i(D\psi, D\phi_k) + F_{k,\kappa}^0(y, \phi_k, D\psi, D\phi_k) = 0 \text{ in } U \text{ if } k = 2,3,\ldots,q, \nonumber 
\end{align}
for $\kappa = 1,2,\ldots,m$ for some smooth (real-analytic) functions $F_{k,\kappa}^i$ for $i = 0,1,2,\ldots,n$, $k = 1,2,\ldots,q$, and $\kappa = 1,2,\ldots,m$.  \eqref{coincide} transforms to 
\begin{gather}
	\phi_2^1 = \phi_3^1 = \cdots = \phi_q^1 \label{coincide_trans} \\
	\phi_1^{\kappa} = \phi_2^{\kappa} = \phi_3^{\kappa} = \cdots = \phi_q^{\kappa} \text{ for } \kappa = 2,3,\ldots,m, \nonumber
\end{gather}
on $S$.  \eqref{tangents2} and \eqref{tangents3} transform to 
\begin{align} \label{tangents_trans}
	\Phi_{\kappa}(D\psi,D\phi_2,D\phi_3,\ldots,D\phi_q) = 0 
\end{align}
on $S$ for $\kappa = 1,2,\ldots,m+1$ for some smooth (real-analytic) functions $\Phi_1,\ldots,\Phi_{m+1}$. 

\subsection{General elliptic systems and the complementing condition}  Consider the general differential system in functions $v_1,v_2\ldots,v_Q$ of the form 
\begin{align} \label{system}
	\sum_{|\alpha| \leq s_k-l} D^{\alpha} F_k^{\alpha}(y,\{D^{\beta} v_j\}_{j=1,\ldots,Q,|\beta| \leq t_j+l}) &= 0 
		\text{ weakly in } U \text{ for } k = 1,2,\ldots,Q \text{ such that } s_k > l, \nonumber \\
	F_k(y,\{D^{\beta} v_j\}_{j=1,\ldots,Q,|\beta| \leq s_k+t_j}) &= 0 
		\text{ in } U \text{ for } k = 1,2,\ldots,Q \text{ such that } s_k \leq l, \nonumber \\
	\Phi_h(y,\{D^{\beta} v_j\}_{j=1,\ldots,Q,|\beta| \leq r_h+t_j}) &= 0 \text{ on } S \text{ for } h = 1,2,\ldots,M, 
\end{align}
where $F_k^{\alpha}$, $F_k$, and $\Phi_h$ are smooth real-valued functions, $l \leq 0$ is an integer, and $s_1,\ldots,s_Q$, $t_1,\ldots,t_Q$, and $r_1,\ldots,r_M$ are integer weights such that $\max_k s_k = 0$, $\min_j t_j \geq -l$, $\min_{k,j} (s_k+t_j) \geq 0$, and $\min_{j,h} (r_h+t_j) \geq 0$.  The linearization of \eqref{system} consists of linear operators of functions $\overline{v}_1,\ldots,\overline{v}_Q$ given by  
\begin{align*}
	\sum_{j=1}^Q \sum_{|\alpha| \leq s_k-l} \sum_{|\beta| \leq t_j+l} D^{\alpha} (a_{kj}^{\alpha \beta}(y) D^{\beta} \overline{v}_j) 
		&= \left. \frac{d}{dt} \sum_{|\alpha| \leq s_k-l} D^{\alpha} F_k^{\alpha}(y,\{D^{\beta} v_j + t D^{\beta} \overline{v}_j\}) \right|_{t=0} 
		\text{ in } U \text{ if } s_k > l, \\
	\sum_{j=1}^Q \sum_{|\beta| \leq s_k+t_j} a_{kj}^{\beta}(y) D^{\beta} \overline{v}_j 
		&= \left. \frac{d}{dt} \, F_k(y,\{D^{\beta} v_j + t D^{\beta} \overline{v}_j \}) \right|_{t=0} 
		\text{ in } U \text{ if } s_k \leq l, \\
	\sum_{j=1}^Q \sum_{|\beta| \leq r_h+t_j} b_{hj}^{\beta}(y) D^{\beta} \overline{v}_j 
		&= \left. \frac{d}{dt} \, \Phi_r(y,\{D^{\beta} v_j + t D^{\beta} \overline{v}_j\}) \right|_{t=0} \text{ on } S, 
\end{align*}
for $k = 1,2,\ldots,Q$ and $h = 1,2,\ldots,M$, where $a_{kj}^{\alpha \beta}$ and $a_{kj}^{\beta}$ are real-valued functions on $U$ and $b_{rj}^{\alpha}$ are real-valued functions on $S$.  Let 
\begin{align*}
	L'_{kj}(y,D) &= \sum_{|\alpha| = s_k-l} \sum_{|\beta| = t_j-l} a_{kj}^{\alpha \beta}(y) D^{\alpha+\beta} 
		\text{ for } y \in U \text{ if } s_k > l, \\
	L'_{kj}(y,D) &= \sum_{|\beta| = s_k+t_j} a_{kj}^{\beta}(y) D^{\beta} 
		\text{ for } y \in U \text{ if } s_k \leq l, \\
	B'_{hj}(y,D) &= \sum_{|\beta| = r_h+t_j} b_{hj}^{\beta}(y) D^{\beta} \text{ for } y \in S, 
\end{align*}
for $j = 1,2,\ldots,Q$, $k = 1,2,\ldots,Q$, and $h = 1,2,\ldots,M$ so that $\sum_{j=1}^Q L'_{kj}(y,D) \, \overline{v}_j$ and $\sum_{j=1}^Q B'_{hj}(y,D) \, \overline{v}_j$ are the principle parts of the linearization of \eqref{system}.  We say \eqref{system} is \textit{elliptic} at $y = y_0$ if the linear system 
\begin{equation*}
	\sum_{j=1}^Q L'_{kj}(y_0,D) \, \overline{v}_j = 0 \text{ in } \mathbb{R}^n \text{ for } k = 1,2,\ldots,Q 
\end{equation*}
has no nontrivial complex-valued solutions of the form $\overline{v}_j = c_j \, e^{i\xi \cdot y}$ for some $\xi \in \mathbb{R}^n \setminus \{0\}$ and $c_j \in \mathbb{C}$ for $j = 1,2,\ldots,Q$.  Assuming \eqref{system} is elliptic at the $y = y_0$, we say \eqref{system} satisfies the \textit{complementing condition} at $y = y_0$ if $\sum_{j=1}^Q (s_j + t_j) = 2M$ and the system 
\begin{align*}
	\sum_{j=1}^Q L'_{kj}(y_0,D) \, \overline{v}_j &= 0 \text{ in } \{y : y_n > 0\} \text{ for } k = 1,2,\ldots,Q, \\
	\sum_{j=1}^Q B'_{hj}(y_0,D) \, \overline{v}_j &= 0 \text{ on } \{y : y_n = 0\} \text{ for } h = 1,2,\ldots,M, 
\end{align*}
has no nontrivial, complex-valued solutions $\overline{v}_j(y',y_n) = e^{i\xi' \cdot y'} \, \overline{v}_j(0,y_n)$ that are exponentially decaying as $y_n \rightarrow +\infty$ for some $\xi' \in \mathbb{R}^{n-1}$.

\subsection{Checking ellipticity and the complementing condition}  \label{sec:checking_mse_subsec}  Now consider the differential system in $\psi$ and $\phi_k^{\kappa}$ with $(k,\kappa) \neq (1,1)$ given by \eqref{mse_trans}, \eqref{tangents_trans}, and 
\begin{gather}
	\phi_k^1 - \phi_2^1 = 0 \text{ for } k = 3,4,\ldots,q, \label{coincide_trans2} \\
	\phi_k^{\kappa} - \phi_1^{\kappa} = 0 \text{ for } k = 2,3,4,\ldots,q, \, \kappa = 2,3,\ldots,m, \nonumber
\end{gather} 
on $S$ with weights $l = 1$, $s = 0$ for the equations of \eqref{mse_trans}, $t = 2$ for the functions $\psi$ and $\phi_k^{\kappa}$ with $(k,\kappa) \neq (1,1)$, $r = -1$ for the equations of \eqref{tangents_trans}, and $r = -2$ for the equations of \eqref{coincide_trans2}.  In order to apply elliptic regularity to prove Theorem \ref{mainthm1}, we must show this differential system is elliptic and satisfies the complementing condition at the origin. 

Let $a_k = D_{x_n} u_k(0)$ for $k = 1,2,\ldots,q$.  By \eqref{gradatorigin}, \eqref{twoplaneassumption}, \eqref{twoplaneassumption2}, and \eqref{transform_deriv}, $D_{y_i} \psi(0) = 0$ for $i = 1,2,\ldots,n-1$ and $|a_1-a_2| = 1/D_{y_n} \psi(0)$, which together with \eqref{gradatorigin}, \eqref{transform_deriv}, and \eqref{transform_deriv2} yields 
\begin{gather} \label{gradatorigin_trans}
	D_{y_i} \psi(0) = D_{y_i} \phi_2(0) = D_{y_i} \phi_3(0) = \cdots = D_{y_i} \phi_q(0) = 0 \text{ for } i = 1,2,\ldots,n-1, \\
	|a_1 - a_2| = \frac{1}{D_{y_n} \psi(0)}, \hspace{4mm} 
	a_k = \frac{D_{y_n} \phi_k(0)}{D_{y_n} \psi(0)} \text{ if } k  \leq s, \hspace{4mm} 
	a_k = \frac{D_{y_n} \phi_k(0)}{D_{y_n} \psi(0) - C} \text{ if } k > s.  \nonumber 
\end{gather}

We want to linearize and take the principle part of \eqref{mse_trans} at the origin.  Consider the equation for $k = 2$ in \eqref{mse_trans}.  We can rewrite the minimal surface equation for $u_2$ from \eqref{mse} as 
\begin{align*}
	&\sum_{i,j=1}^n G^{ij}(Du_2) \, D_{x_i x_j} u_2^{\kappa} 
	+ \sum_{i,j,k,l=1}^n \sum_{\lambda=1}^m (G^{ij}(Du_2) \, G^{kl}(x,u_2,Du_2) - G^{ik}(Du_2) \, G^{lj}(Du_2) \\& - G^{il}(Du_2) \, G^{kj}(Du_2)) 
		\, D_{x_j} u_2^{\kappa} \, D_{x_k} u_2^{\lambda} \, D_{x_i x_l} u_2^{\lambda} = \mathscr{H}^{n+\kappa}(x,u_2,Du_2) \text{ in } \Omega_+, 
\end{align*}
using the fact that $u_2 \in C^{\infty}(\Omega_+)$.  By \eqref{gradatorigin}, $G_{ii}(Du_2(0)) = 1$ for $i = 1,2,\ldots,n-1$, $G_{nn}(Du_2(0)) = 1+|a_2|^2$, and $G_{ij}(Du_2(0)) = 0$ for $i \neq j$.  Thus linearizing and taking the principle part of the equation for $k = 2$ in \eqref{mse_trans} yields 
\begin{equation*}
	\sum_{\lambda=1}^m \left( \delta_{\kappa \lambda} - \frac{a_2^{\kappa} \, a_2^{\lambda}}{1+|a_2|^2} \right) 
	\left( \sum_{i=1}^{n-1} \overline{D_{x_i x_i} u_2^{\lambda}} + \frac{1}{1+|a_2|^2} \, \overline{D_{x_n x_n} u_2^{\lambda}} \right) = 0 
	\text{ on } \{ y : y_n > 0 \} 
\end{equation*}
for $\kappa = 1,2,\ldots,m$, where for $i = 1,2,\ldots,n$ we let $\overline{D_{x_i x_i} u_2^{\lambda}}$ denote the result of rewriting $D_{x_i x_i} u_2^{\lambda}$ as a function of $y$ and then computing its linearization and second order principle part at the origin.  Since the matrix $(\delta_{\kappa \lambda} - a_2^{\kappa} \, a_2^{\lambda}/(1+|a_2|^2))_{\kappa,\lambda=1,2,\ldots,m}$ is invertible (because it has eigenvalue $1/(1+|a_2|^2)$ with eigenvector $a_2$ and eigenvalue $1$ with multiplicity $m-1$), 
\begin{equation} \label{mse_lin0}
	\sum_{i=1}^{n-1} \overline{D_{x_i x_i} u_2} + \frac{1}{1+|a_2|^2} \, \overline{D_{x_n x_n} u_2} = 0 \text{ on } \{ y : y_n > 0 \}
\end{equation}
for $\kappa = 1,2,\ldots,m$.  By \eqref{transform_deriv} and \eqref{gradatorigin_trans}, 
\begin{align} \label{u_lin}
	\overline{D_{x_i x_i} u_2} &= \overline{\left( D_{y_i} - \frac{D_{y_i} \psi}{D_{y_n} \psi} \, D_{y_n} \right)^2 \phi_2} 
		= \overline{D_{y_i} \left( D_{y_i} - \frac{D_{y_i} \psi}{D_{y_n} \psi} \, D_{y_n} \right) \phi_2} \\
		&= D_{y_i y_i} \overline{\phi}_2 - D_{y_i y_i} \overline{\psi} \, \frac{D_{y_n} \phi_2(0)}{D_{y_n} \psi(0)} 
		= D_{y_i y_i} (\overline{\phi}_2 - \overline{\psi} \, a_2) \nonumber \\
	\overline{D_{x_n x_n} u_2} &= \overline{\left( \frac{1}{D_{y_n} \psi} \, D_{y_n} \right)^2 \phi_2}
		= \frac{1}{D_{y_n} \psi(0)^2} \, D_{y_n y_n} \overline{\phi}_2 - D_{y_n y_n} \overline{\psi} \, \frac{D_{y_n} \phi_2(0)}{D_{y_n} \psi(0)^3} \nonumber \\
		&= |a_1-a_2|^2 \, D_{y_n y_n} (\overline{\phi}_2 - \overline{\psi} \, a_2) \nonumber 
\end{align}
for functions $\overline{\psi}$ and $\overline{\phi}_2$, which substituting into \eqref{mse_lin0} yields  
\begin{equation*}
	(1+|a_2|^2) \sum_{i=1}^{n-1} D_{y_i y_i} (\overline{\phi}_2 - \overline{\psi} a_2) + |a_1-a_2|^2 D_{y_n y_n} (\overline{\phi}_2 - \overline{\psi} a_2) = 0 
	\text{ in } \{y : y_n > 0\}. 
\end{equation*}
By similar computations, we can linearize and take the principle part of the equations in \eqref{mse_trans} for every $k \in \{1,2,\ldots,q\}$ using \eqref{gradatorigin}, \eqref{transform_deriv}, \eqref{transform_deriv2}, and \eqref{gradatorigin_trans} to obtain the differential system in $\overline{\psi}, \overline{\phi}_2, \overline{\phi}_3, \ldots, \overline{\phi}_q$ of 
\begin{gather}
	(1+|a_1|^2) \sum_{i=1}^{n-1} D_{y_i y_i} (\overline{\phi}_2^1 - a_1^1 \, \overline{\psi}) + |a_1-a_2|^2 \, D_{y_n y_n} (\overline{\phi}_2^1 - a_1^1 \, \overline{\psi}) = 0, \label{mse_lin} \\
	(1+|a_k|^2) \sum_{i=1}^{n-1} D_{y_i y_i} (\overline{\phi}_k^{\kappa} - a_k^{\kappa} \, \overline{\psi}) 
		+ |a_1-a_2|^2 \, D_{y_n y_n} (\overline{\phi}_k^{\kappa} - a_k^{\kappa} \, \overline{\psi}) = 0 \text{ if } (k,\kappa) \neq (1,1), \, k \leq s, \nonumber \\
	(1+|a_k|^2) \sum_{i=1}^{n-1} D_{y_i y_i} (\overline{\phi}_k^{\kappa} - a_k^{\kappa} \, \overline{\psi}) 
		+ \frac{|a_1 - a_2|^2}{(1 - C \, |a_1 - a_2|)^2} \, D_{y_n y_n} (\overline{\phi}_k^{\kappa} - a_k^{\kappa} \, \overline{\psi}) = 0 \text{ if } k > s, \nonumber 
\end{gather}
in $\{y : y_n > 0\}$.  \eqref{mse_lin} is obviously an elliptic system in $\overline{\phi}_2^1 - a_1^1 \, \overline{\psi}$ and $\overline{\phi}_k^{\kappa} - a_k^{\kappa} \, \overline{\psi}$ for $(k,\kappa) \neq (1,1)$. 

To check the complementing condition, it suffices to consider solutions to \eqref{mse_lin} of the form 
\begin{equation} \label{barpsi_eqn}
	\overline{\phi}_2^1 - a_1^1 \, \overline{\psi} = c_1^1 \, e^{i\xi' \cdot y' - \lambda_1^1 y_n}, \hspace{10mm} 
	\overline{\phi}_k^{\kappa} - a_k^{\kappa} \, \overline{\psi} = c_k^{\kappa} \, e^{i\xi' \cdot y' - \lambda_k^{\kappa} y_n} \text{ for } (k,\kappa) \neq (1,1), 
\end{equation}
where $\xi' \in \mathbb{R}^{n-1}$, $c_k^{\kappa} \in \mathbb{C}$, and $\lambda_k^{\kappa} > 0$.  It is readily computed that  
\begin{align} \label{nu_eqn}
	\lambda_k^{\kappa} &= \frac{\sqrt{1+|a_k|^2} \, |\xi'|}{|a_1-a_2|} \text{ if } k \leq s, \\
	\lambda_k^{\kappa} &= \frac{(C \, |a_1 - a_2| - 1) \, \sqrt{1+|a_k|^2} \, |\xi'|}{|a_1-a_2|} \text{ if } k > s. \nonumber 
\end{align}
(note that $C \, |a_1-a_2| - 1 > 0$ since $C > D_{y_n} \psi(0) = 1/|a_1-a_2|$).  Since $\lambda_k^{\kappa} > 0$, $\xi' \neq 0$. 

The linearization of \eqref{coincide_trans} simply yields
\begin{gather} 
	\overline{\phi}_2^1 = \overline{\phi}_3^1 = \overline{\phi}_4^1 = \cdots = \overline{\phi}_q^1, \label{coincide_lin} \\
	\overline{\phi}_1^{\kappa} = \overline{\phi}_2^{\kappa} = \overline{\phi}_3^{\kappa} = \cdots = \overline{\phi}_q^{\kappa} 
		\text{ for } \kappa = 2,3,\ldots,m, \nonumber 
\end{gather}
on $\{ y : y_n = 0 \}$.  By \eqref{barpsi_eqn}, $\overline{\phi}_2^1 - a_k^1 \overline{\psi} = c_k^1 e^{i\xi' \cdot y'}$ on $\{ y : y_n = 0 \}$ for $k = 1,2$, so solving for $\overline{\psi}$ and $\overline{\phi}_2^1$, 
\begin{equation} \label{barpsi_and_barphi12}
	\overline{\psi} = \frac{c_2^1 - c_1^1}{a_1^1 - a_2^1} \, e^{i\xi' \cdot y'}, \quad 
	\overline{\phi}_2^1 = \frac{a_1^1 \, c_2^1 - a_2^1 \, c_1^1}{a_1^1 - a_2^1} \, e^{i\xi' \cdot y'}, 
\end{equation}
on $\{ y : y_n = 0 \}$.  Hence by \eqref{barpsi_eqn},  \eqref{coincide_lin}, and \eqref{barpsi_and_barphi12}, 
\begin{align*}
	c_k^1 \, e^{i\xi' \cdot y'} 
	&= \overline{\phi}_2^1 - a_k^1 \, \overline{\psi} 
	= \frac{(a_1^1 \, c_2^1 - a_2^1 \, c_1^1) - a_k^1 \, (c_2^1 - c_1^1)}{a_1^1 - a_2^1} \, e^{i\xi' \cdot y'}, \\
	c_k^{\kappa} \, e^{i\xi' \cdot y'} 
	&= \overline{\phi}_1^{\kappa} - a_k^{\kappa} \, \overline{\psi} 
	= \left( c_1^{\kappa} - \frac{a_k^{\kappa} \, (c_2^1 - c_1^1)}{a_1^1 - a_2^1} \right)  e^{i\xi' \cdot y'} \text{ for } \kappa = 2,3,\ldots,m, 
\end{align*}
on $\{ y : y_n = 0 \}$.  Cancelling $e^{i\xi' \cdot y'}$ and using $a_1^{\kappa} = a_2^{\kappa} = 0$ for $\kappa = 2,3,\ldots,m$ and $a_1^1 - a_2^1 = |a_1-a_2|$ by \eqref{twoplaneassumption} and \eqref{twoplaneassumption2}, 
\begin{equation} \label{coincide_lin2}
	c_k = c_1^1 \, \frac{a_k - a_2}{|a_1 - a_2|} + c_2^1 \, \frac{a_1 - a_k}{|a_1 - a_2|} + \widehat{c}_1 \text{ for } k = 1,2,3,\ldots,q, 
\end{equation} 
where $\widehat{c}_1 = (c_1^2,c_1^3,\ldots,c_1^m)$. 

Next we want to linearize and take the principle part of \eqref{tangents_trans} at the origin.  Let $\overline{D_{x_i} u_k^{\kappa}}$ denote the result of rewriting $D_{x_i} u_k^{\kappa}$ as a function of $y$ and then computing the first order principle part of its linearization at the origin.  Linearizing and taking the principle part of \eqref{tangents2} and \eqref{tangents3} using \eqref{gradatorigin_trans} yields 
\begin{gather} 
	\sum_{k=1}^s \theta_k \, \frac{a_k \cdot \overline{D_{x_n} u_k}}{(1+|a_k|^2)^{3/2}} 
		- \sum_{k=s+1}^q \theta_k \, \frac{a_k \cdot \overline{D_{x_n} u_k}}{(1+|a_k|^2)^{3/2}} = 0, \label{tangents_lin0} \\
	\sum_{k=1}^s \theta_k \left( \frac{\overline{D_{x_n} u_k}}{(1+|a_k|^2)^{1/2}} - \frac{a_k \cdot \overline{D_{x_n} u_k} \, a_k}{(1+|a_k|^2)^{3/2}} \right)
		- \sum_{k=s+1}^q \theta_k \left( \frac{\overline{D_{x_n} u_k}}{(1+|a_k|^2)^{1/2}} - \frac{a_k \cdot \overline{D_{x_n} u_k} \, a_k}{(1+|a_k|^2)^{3/2}} 
		\right) = 0, \nonumber
\end{gather}
on $\{ y : y_n = 0 \}$.  By \eqref{transform_deriv}, \eqref{transform_deriv2}, and \eqref{gradatorigin_trans}, 
\begin{align*} 
	\overline{D_{x_n} u_1^1} 
		&= \frac{1}{D_{y_n} \psi(0)} D_{y_n} \overline{\phi}_2^1 - \frac{1+D_{y_n} \phi_2^1(0)}{D_{y_n} \psi(0)^2} D_{y_n} \overline{\psi} 
		= |a_1-a_2| D_{y_n} (\overline{\phi}_2^1 - a_1^1 \overline{\psi}), \\
	\overline{D_{x_n} u_k^{\kappa}} 
		&= \frac{1}{D_{y_n} \psi(0)} D_{y_n} \overline{\phi}_k^{\kappa} - \frac{D_{y_n} \phi_k^{\kappa}(0)}{D_{y_n} \psi(0)^2} D_{y_n} \overline{\psi} 
		= |a_1-a_2| D_{y_n} (\overline{\phi}_k^{\kappa} - a_k^{\kappa} \overline{\psi}) \text{ if } k \leq s, \, (k,\kappa) \neq (1,1), \nonumber \\
	\overline{D_{x_n} u_k^{\kappa}} 
		&= \frac{1}{D_{y_n} \psi(0)-C} D_{y_n} \overline{\phi}_k^{\kappa} - \frac{D_{y_n} \phi_k^{\kappa}(0)}{(D_{y_n} \psi(0)-C)^2} D_{y_n} \overline{\psi} 
		= \frac{|a_1-a_2|}{1 - C |a_1-a_2|} D_{y_n} (\overline{\phi}_k^{\kappa} - a_k^{\kappa} \overline{\psi}) \text{ if } k > s. \nonumber
\end{align*}
which substituting into \eqref{tangents_lin0} yields 
\begin{align} \label{tangents_lin}
	&\theta_1 \, \frac{a_1^1 D_{y_n} (\overline{\phi}_2^1 - a_1^1 \, \overline{\psi})}{(1+a_1^2)^{3/2}} 
		+ \sum_{k \leq s, \, (k,\kappa) \neq (1,1)} \theta_k \, \frac{a_k^{\kappa} D_{y_n} (\overline{\phi}_k^{\kappa} - a_k^{\kappa} \, \overline{\psi})
		}{(1+a_k^2)^{3/2}} \\&\hspace{10mm} - \sum_{k=s+1}^q \theta_k \frac{a_k \cdot D_{y_n} (\overline{\phi}_k - a_k \, \overline{\psi})
		}{(1 - C |a_1-a_2|) (1+a_k^2)^{3/2}} = 0, \nonumber \\
	&\theta_1 \, \frac{D_{y_n} (\overline{\phi}_2^1 - a_1^1 \, \overline{\psi}) \, e_1}{(1+|a_1|^2)^{3/2}} 
		+ \sum_{k \leq s, \, (k,\kappa) \neq (1,1)} \theta_k \left( \frac{D_{y_n} (\overline{\phi}_k^{\kappa} - a_k^{\kappa} \, \overline{\psi}) \, e_{\kappa}
		}{(1+|a_k|^2)^{1/2}} - \frac{a_k^{\kappa} D_{y_n} (\overline{\phi}_k^{\kappa} - a_k^{\kappa} \, \overline{\psi}) \, a_k}{(1+|a_k|^2)^{3/2}} \right) \nonumber \\
		&\hspace{10mm} - \sum_{k=s+1}^q \theta_k \left( \frac{D_{y_n} (\overline{\phi}_k - \overline{\psi} \, a_k)}{(1 - C |a_1-a_2|) (1+|a_k|^2)^{1/2}} 
		- \frac{a_k \cdot D_{y_n} (\overline{\phi}_k - \overline{\psi} \, a_k) \, a_k}{(1 - C |a_1-a_2|) (1+|a_k|^2)^{3/2}} \right) = 0, \nonumber 
\end{align}
on $\{ y : y_n = 0 \}$, where $e_1,e_2,\ldots,e_m$ is the standard basis for $\mathbb{R}^m$.  By \eqref{barpsi_eqn} and \eqref{nu_eqn}, 
\begin{align*} 
	&D_{y_n} (\overline{\phi}_2^1 - a_1^1 \, \overline{\psi}) = -\lambda_1^1 \, c_1^1 \, e^{i\xi' \cdot y'} 
		= \frac{\sqrt{1+|a_1|^2} \, |\xi'|}{|a_1-a_2|} \, c_1^1 \, e^{i\xi' \cdot y'}, \\
	&D_{y_n} (\overline{\phi}_k^{\kappa} - a_k^{\kappa} \, \overline{\psi}) = -\lambda_k^{\kappa} \, c_k^{\kappa} \, e^{i\xi' \cdot y'} 
		= \frac{\sqrt{1+|a_k|^2} \, |\xi'|}{|a_1-a_2|} \, c_k^{\kappa} \, e^{i\xi' \cdot y'} \text{ if } (k,\kappa) \neq (1,1), \, k \leq s, \\
	&D_{y_n} (\overline{\phi}_k^{\kappa} - a_k^{\kappa} \, \overline{\psi}) = -\lambda_k^{\kappa} \, c_k^{\kappa} \, e^{i\xi' \cdot y'} 
		= \frac{(C |a_1 - a_2| - 1) \, \sqrt{1+|a_k|^2} \, |\xi'|}{|a_1-a_2|} \, c_k^{\kappa} e^{i\xi' \cdot y'} \text{ if } k > s, 
\end{align*}
on $\{ y : y_n = 0 \}$, which when substituted into \eqref{tangents_lin} yields  
\begin{align}
	 \label{tangents_lin2a} &\sum_{k=1}^q \theta_k \frac{a_k \cdot c_k}{1+|a_k|^2} = 0, \\
	\label{tangents_lin2b} &\sum_{k=1}^q \theta_k \left( c_k - \frac{a_k \cdot c_k \, a_k}{1+|a_k|^2} \right) = 0. 
\end{align}
Let $\widehat{a}_k = (a_k^2,a_k^3,\ldots,a_k^m)$ for $k = 1,2,\ldots,q$ and recall that $\widehat{c}_1 = (c_1^2,c_1^3,\ldots,c_1^m)$.  By substituting \eqref{coincide_lin2} into \eqref{tangents_lin2a}, 
\begin{equation} \label{tangents_lin3a}
	\sum_{k=1}^q \frac{\theta_k}{1+|a_k|^2} \left( \frac{a_k \cdot (a_k-a_2)}{|a_1-a_2|} \, c_1^1 + \frac{a_k \cdot (a_1-a_k)}{|a_1-a_2|} \, c_2^1 
		+ \widehat{a}_k \cdot \widehat{c}_1 \right) = 0.
\end{equation}
By breaking up \eqref{tangents_lin2b} into its first and remaining components 
\begin{equation}
	\sum_{k=1}^q \theta_k \, \frac{(1+|a_k|^2) \, c_k^1 - a_k \cdot c_k \, a_k^1}{1+|a_k|^2} = 0, \quad 
	\sum_{k=1}^q \theta_k \, \frac{(1+|a_k|^2) \, \widehat{c}_k - a_k \cdot c_k \, \widehat{a}_k^1}{1+|a_k|^2} = 0 
\end{equation}
and substituting \eqref{coincide_lin2} using $\widehat{a}_1 = \widehat{a}_2 = 0$ by \eqref{twoplaneassumption2} and 
\begin{equation*}
	(1+|a_k|^2) \, (a_k-a_i) - a_k \cdot (a_k-a_i) \, a_k = a_k - a_i - |\widehat{a}_k|^2 \, a_i + a_i \cdot a_k \, \widehat{a}_k \text{ for } i = 1,2, 
\end{equation*}
we obtain 
\begin{align} \label{tangents_lin3b}
	&\sum_{k=1}^q \frac{\theta_k}{1+|a_k|^2} \left( \frac{a_k^1 - a_2^1 - |\widehat{a}_k|^2 a_2^1}{|a_1-a_2|} \, c_1^1 
		+ \frac{a_1^1 - a_k^1 + |\widehat{a}_k|^2 a_1^1}{|a_1-a_2|} \, c_2^1 - a_k^1 \, \widehat{a}_k \cdot \widehat{c}_1 \right) = 0, \\
	&\sum_{k=1}^q \frac{\theta_k \, ((1 + a_2 \cdot a_k) \, c_1^1 - (1 + a_1 \cdot a_k) \, c_2^1) \, \widehat{a}_k}{(1+|a_k|^2) \, |a_1-a_2|} 
		+ \sum_{k=1}^q \theta_k \left( \widehat{c}_1 - \frac{\widehat{a}_k \cdot \widehat{c}_1 \, \widehat{a}_k}{1+|a_k|^2} \right) = 0. \nonumber 
\end{align}
In order to solve \eqref{tangents_lin3a} and \eqref{tangents_lin3b}, after an orthogonal change of coordinates of $\mathbb{R}^m$ we may suppose that $c_1^{\kappa} = 0$ for $\kappa = 3,4,\ldots,m$.  Then \eqref{tangents_lin3a} and \eqref{tangents_lin3b} imply that $c_1^1, c_2^1, c_1^2$ satisfy 
\begin{align} \label{tangents_lin4}
	&\sum_{k=1}^q \frac{\theta_k}{1+|a_k|^2} \left( \frac{a_k \cdot (a_k-a_2)}{|a_1-a_2|} \, c_1^1 + \frac{a_k \cdot (a_1-a_k)}{|a_1-a_2|} \, c_2^1 
		+ a_k^2 \, c_1^2 \right) = 0, \\
	&\sum_{k=1}^q \frac{\theta_k}{1+|a_k|^2} \left( \frac{a_k^1 - a_2^1 - |\widehat{a}_k|^2 a_2^1}{|a_1-a_2|} \, c_1^1 
		+ \frac{a_1^1 - a_k^1 + |\widehat{a}_k|^2 a_1^1}{|a_1-a_2|} \, c_2^1 - a_k^1 \, a_k^2 \, c_1^2 \right) = 0, \nonumber \\
	&\sum_{k=1}^q \frac{\theta_k \, ((1 + a_2 \cdot a_k) \, c_1^1 - (1 + a_1 \cdot a_k) \, c_2^1) \, a_k^2}{(1+|a_k|^2) \, |a_1-a_2|} 
		+ \sum_{k=1}^q \theta_k \left( 1 - \frac{(a_k^2)^2}{1+|a_k|^2} \right) \, c_1^2 = 0 \nonumber
\end{align}
To simplify notation, $\widetilde{\theta}_k = \theta_k/(1+|a_k|^2)$ and $\widetilde{a}_k = (a_k^3,a_k^4,\ldots,a_k^m)$ for $k = 1,2,\ldots,q$.  We compute the determinant $D$ of the linear system of \eqref{tangents_lin4}  by first using elementary row operations (add column 1 to column 2, then add $a_2^1 \cdot$ column 2 to column 1) to simply: 
\begin{align*}
	D &= \frac{1}{|a_1-a_2|^2} \left| \begin{array}{ccc} 
		\sum \widetilde{\theta}_k \, a_k \cdot (a_k-a_2) & \sum \widetilde{\theta}_k \, a_k \cdot (a_1-a_k) & \sum \widetilde{\theta}_k \, a_k^2 \\ 
		\sum \widetilde{\theta}_k (a_k^1-a_2^1-|\widehat{a}_k|^2 \, a_2^1) & \sum \widetilde{\theta}_k (a_1^1-a_k^1+|\widehat{a}_k|^2 \, a_1^1) 
			& -\sum \widetilde{\theta}_k \, a_k^1 \, a_k^2 \\ 
		\sum \widetilde{\theta}_k (1+a_2 \cdot a_k) \, a_k^2 & -\sum \widetilde{\theta}_k (1+a_1 \cdot a_k) \, a_k^2 
			& \sum \widetilde{\theta}_k (1+(a_k^1)^2 + |\widetilde{a}_k|^2)  
	\end{array} \right| \\
	&= \frac{1}{|a_1-a_2|} \left| \begin{array}{ccc} 
		\sum \widetilde{\theta}_k \, a_k \cdot (a_k-a_2) & \sum \widetilde{\theta}_k \, a_k^1 & \sum \widetilde{\theta}_k \, a_k^2 \\ 
		\sum \widetilde{\theta}_k (a_k^1-a_2^1-|\widehat{a}_k|^2 \, a_2^1) & \sum \widetilde{\theta}_k (1+|\widehat{a}_k|^2) 
			& -\sum \widetilde{\theta}_k \, a_k^1 \, a_k^2 \\ 
		\sum \widetilde{\theta}_k (1+a_2 \cdot a_k) \, a_k^2 & -\sum \widetilde{\theta}_k \, a_k^1 \, a_k^2 
			& \sum \widetilde{\theta}_k (1+(a_k^1)^2 + |\widetilde{a}_k|^2)  
	\end{array} \right| \\
	&= \frac{1}{|a_1-a_2|} \left| \begin{array}{ccc} 
		\sum \widetilde{\theta}_k \, |a_k|^2 & \sum \widetilde{\theta}_k \, a_k^1 & \sum \widetilde{\theta}_k \, a_k^2 \\ 
		\sum \widetilde{\theta}_k \, a_k^1 & \sum \widetilde{\theta}_k (1+|\widehat{a}_k|^2) & -\sum \widetilde{\theta}_k \, a_k^1 \, a_k^2 \\ 
		\sum \widetilde{\theta}_k \, a_k^2 & -\sum \widetilde{\theta}_k \, a_k^1 \, a_k^2 & \sum \widetilde{\theta}_k (1+(a_k^1)^2 + |\widetilde{a}_k|^2)  
	\end{array} \right| \\
	&= \frac{1}{|a_1-a_2|} \left| \begin{array}{ccc} 
		\sum \widetilde{\theta}_k \, ((a_k^1)^2+(a_k^2)^2+|\widetilde{a}_k|^2) & \sum \widetilde{\theta}_k \, a_k^1 & \sum \widetilde{\theta}_k \, a_k^2 \\ 
		\sum \widetilde{\theta}_k \, a_k^1 & \sum \widetilde{\theta}_k (1+(a_k^2)^2+|\widetilde{a}_k|^2) & -\sum \widetilde{\theta}_k \, a_k^1 \, a_k^2 \\ 
		\sum \widetilde{\theta}_k \, a_k^2 & -\sum \widetilde{\theta}_k \, a_k^1 \, a_k^2 & \sum \widetilde{\theta}_k (1+(a_k^1)^2 + |\widetilde{a}_k|^2)  
	\end{array} \right| .
\end{align*}
Expanding the $3 \times 3$ determinate,
\begin{align} \label{D_eqn1}
	D &= \frac{1}{|a_1-a_2|} \left( \sum \widetilde{\theta}_k \, ((a_k^1)^2+(a_k^2)^2+|\widetilde{a}_k|^2) 
		\cdot \sum \widetilde{\theta}_k (1+(a_k^2)^2+|\widetilde{a}_k|^2) \cdot \sum \widetilde{\theta}_k (1+(a_k^1)^2 + |\widetilde{a}_k|^2) \right. \\&
		\hspace{5mm} - 2 \sum \widetilde{\theta}_k \, a_k^1 \cdot \sum \widetilde{\theta}_k \, a_k^2 \cdot \sum \widetilde{\theta}_k \, a_k^1 \, a_k^2 
		- \sum \widetilde{\theta}_k (1+(a_k^1)^2 + |\widetilde{a}_k|^2) \cdot \left( \sum \widetilde{\theta}_k \, a_k^1 \right)^2 \nonumber \\& \hspace{5mm} \left. 
		- \sum \widetilde{\theta}_k (1+(a_k^2)^2 + |\widetilde{a}_k|^2) \cdot \left( \sum \widetilde{\theta}_k \, a_k^2 \right)^2 
		- \sum \widetilde{\theta}_k \, |a_k|^2 \cdot \left( \sum \widetilde{\theta}_k \, a_k^1 \, a_k^2 \right)^2 \right) \nonumber
\end{align}
Let 
\begin{equation*}
	S = \sum \widetilde{\theta}_k \, ((a_k^1)^2+(a_k^2)^2+|\widetilde{a}_k|^2) 
		\cdot \sum \widetilde{\theta}_k \, (1+(a_k^2)^2+|\widetilde{a}_k|^2) \cdot \sum \widetilde{\theta}_k \, (1+(a_k^1)^2 + |\widetilde{a}_k|^2). 
\end{equation*}
By expanding $S$ while grouping terms with factors $\left( \sum_k \widetilde{\theta}_k \right)^p$ for similar powers of $p$, 
\begin{align*}
	S &= \left( \sum \widetilde{\theta}_k \right)^2 \left( \widetilde{\theta}_k \, (a_k^1)^2 + \widetilde{\theta}_k \, (a_k^2)^2 
		+ \widetilde{\theta}_k \, |\widetilde{a}_k|^2 \right) + \sum \widetilde{\theta}_k \cdot \sum \widetilde{\theta}_k \, (a_k^1)^2 \cdot 
		\sum \widetilde{\theta}_k ((a_k^1)^2 + |\widetilde{a}_k|^2) 
		\\&\hspace{10mm} + \sum \widetilde{\theta}_k \cdot \sum \widetilde{\theta}_k \, (a_k^2)^2 \cdot \sum \widetilde{\theta}_k ((a_k^2)^2 + |\widetilde{a}_k|^2) 
		\\&\hspace{10mm} + 2 \sum \widetilde{\theta}_k \cdot \sum \widetilde{\theta}_k ((a_k^1)^2 + |\widetilde{a}_k|^2) \cdot 
		\sum \widetilde{\theta}_k ((a_k^2)^2 + |\widetilde{a}_k|^2) \\&\hspace{14mm} + \sum \widetilde{\theta}_k \, |a_k|^2 
		\cdot \sum \widetilde{\theta}_k \, ((a_k^1)^2 + |\widetilde{a}_k|^2) \cdot \sum \widetilde{\theta}_k \, ((a_k^2)^2 + |\widetilde{a}_k|^2). 
\end{align*}
Then using 
\begin{equation*}
	\sum_k \widetilde{\theta}_k \, ((a_k^1)^2 + |\widetilde{a}_k|^2) \cdot \sum_k \widetilde{\theta}_k \, ((a_k^2)^2 + |\widetilde{a}_k|^2) 
	= \sum_k \widetilde{\theta}_k \, (a_k^1)^2 \cdot \sum_k \widetilde{\theta}_k \, (a_k^2)^2 
	+ \sum_k \widetilde{\theta}_k \, |\widetilde{a}_k|^2 \cdot \sum_k \widetilde{\theta}_k \, |a_k|^2, 
\end{equation*}
we obtain 
\begin{align*}
	S &= \left( \sum \widetilde{\theta}_k \right)^2 \left( \widetilde{\theta}_k \, (a_k^1)^2 + \widetilde{\theta}_k \, (a_k^2)^2 
		+ \widetilde{\theta}_k \, |\widetilde{a}_k|^2 \right) + \sum \widetilde{\theta}_k \cdot \sum \widetilde{\theta}_k \, (a_k^1)^2 \cdot 
		\sum \widetilde{\theta}_k ((a_k^1)^2 + |\widetilde{a}_k|^2) 
		\\&\hspace{5mm} + \sum \widetilde{\theta}_k \cdot \sum \widetilde{\theta}_k \, (a_k^2)^2 \cdot \sum \widetilde{\theta}_k ((a_k^2)^2 + |\widetilde{a}_k|^2) 
		+ 2 \sum \widetilde{\theta}_k \cdot \sum \widetilde{\theta}_k \, (a_k^1)^2 \cdot \sum \widetilde{\theta}_k \, (a_k^2)^2 
		\\&\hspace{5mm} + 2 \sum \widetilde{\theta}_k \cdot \sum \widetilde{\theta}_k \, |\widetilde{a}_k|^2 \cdot \sum \widetilde{\theta}_k |a_k|^2  
		+ \sum \widetilde{\theta}_k \, |a_k|^2 \cdot \sum \widetilde{\theta}_k \, (a_k^1)^2 \cdot \sum \widetilde{\theta}_k \, (a_k^2)^2 
		\\&\hspace{5mm} + \sum \widetilde{\theta}_k \, |\widetilde{a}_k|^2 \cdot \left( \sum \widetilde{\theta}_k \, |a_k|^2 \right)^2. 
\end{align*}
Finally, by regrouping terms, 
\begin{align} \label{D_eqn2}
	S &= \sum \widetilde{\theta}_k \cdot \sum \widetilde{\theta}_k \, (a_k^1)^2 \cdot \sum \widetilde{\theta}_k (1 + (a_k^1)^2 + |\widetilde{a}_k|^2) 
		\\&\hspace{5mm} + \sum \widetilde{\theta}_k \cdot \sum \widetilde{\theta}_k \, (a_k^2)^2 
		\cdot \sum \widetilde{\theta}_k (1 + (a_k^2)^2 + |\widetilde{a}_k|^2) 
		+ 2 \sum \widetilde{\theta}_k \cdot \sum \widetilde{\theta}_k \, (a_k^1)^2 \cdot \sum \widetilde{\theta}_k \, (a_k^2)^2 \nonumber 
		\\&\hspace{5mm} + \sum \widetilde{\theta}_k \, |a_k|^2 \cdot \sum \widetilde{\theta}_k \, (a_k^1)^2 \cdot \sum \widetilde{\theta}_k \, (a_k^2)^2 
		+ \sum \widetilde{\theta}_k \, |\widetilde{a}_k|^2 \cdot \left( \sum \widetilde{\theta}_k (1+|a_k|^2) \right)^2. \nonumber
\end{align}
Hence, by substituting \eqref{D_eqn2} into \eqref{D_eqn1} and regrouping terms, 
\begin{align*}
	D &= \frac{1}{|a_1-a_2|} \left( \sum \widetilde{\theta}_k \, (1+(a_k^1)^2+|\widetilde{a}_k|^2) \cdot \left( \sum \widetilde{\theta}_k 
		\cdot \sum \widetilde{\theta}_k \, (a_k^1)^2 - \left( \sum \widetilde{\theta}_k \, a_k^1 \right)^2 \right) \right. \\&\hspace{5mm}
		+ \sum \widetilde{\theta}_k \, (1+(a_k^2)^2+|\widetilde{a}_k|^2) \cdot \left( \sum \widetilde{\theta}_k 
		\cdot \sum \widetilde{\theta}_k \, (a_k^2)^2 - \left( \sum \widetilde{\theta}_k \, a_k^2 \right)^2 \right) \\&\hspace{5mm}
		+ \sum \widetilde{\theta}_k \, |a_k|^2 \left( \sum \widetilde{\theta}_k \, (a_k^1)^2 
		\cdot \sum \widetilde{\theta}_k \, (a_k^2)^2 - \left( \sum \widetilde{\theta}_k \, a_k^1 \, a_k^2 \right)^2 \right) \\&\hspace{5mm}
		+ 2 \left( \sum \widetilde{\theta}_k \cdot \sum \widetilde{\theta}_k \, (a_k^1)^2 \cdot \sum \widetilde{\theta}_k \, (a_k^2)^2
		- \sum\widetilde{\theta}_k \, a_k^1 \cdot \sum \widetilde{\theta}_k \, a_k^2 \cdot \sum \widetilde{\theta}_k \, a_k^1 \, a_k^2 
		\right) \\&\hspace{5mm} \left. + \sum \widetilde{\theta}_k \, |\widetilde{a}_k|^2 \cdot \left( \sum \widetilde{\theta}_k \, (1+|a_k|^2) 
		\right)^2 \right), 
\end{align*}
By Cauchy-Schwartz $D \geq 0$ with $D = 0$ if and only if $a_1^1 = a_2^1 = \cdots a_q^1$, $a_1^2 = a_2^2 = \cdots a_q^2$, and $\widetilde{a}_1 = \widetilde{a}_2 = \cdots = \widetilde{a}_q = 0$.  Since $a_1 = a_2 = \cdots = a_q$ would contradict the assumption that $M_k$ are not all tangent to the same plane at the origin, $D > 0$.  Hence \eqref{tangents_lin4} implies that $c_1^1 = c_2^1 = c_1^2 = 0$.  Thus $c_k^{\kappa} = 0$ for all $k = 1,2,\ldots,q$ and $\kappa = 1,2,\ldots,m$.  Therefore the system \eqref{mse_lin}, \eqref{coincide_lin}, and \eqref{tangents_lin} satisfies the complementing condition in $\overline{\phi}_2^1 - a_1 \, \overline{\psi}$ and $\overline{\phi}_k^{\kappa} - a_k^{\kappa} \, \overline{\psi}$ for $(k,\kappa) \neq (1,1)$.  Equivalently, the system \eqref{mse_lin}, \eqref{coincide_lin}, and \eqref{tangents_lin} satisfies the complementing condition in $\overline{\psi}$ and $\overline{\phi}_k^{\kappa}$ for $(k,\kappa) \neq (1,1)$.  Consequently the differential system given by \eqref{mse_trans}, \eqref{tangents_trans}, and \eqref{coincide_trans2} is elliptic and satisfies the complementing condition at the origin. 

\begin{proof}[Proof of Theorem \ref{mainthm1}]
Recall that $\psi$ and $\phi_k^{\kappa}$ with $(k,\kappa) \neq (1,1)$ are $C^{1,\mu}$ on $U \cup S$ and solve a system of the form \eqref{system} that is elliptic and satisfies the complementing condition near the origin.  As was pointed out in~\cite{KNS}, we can establish a Schauder estimate for linear systems of the form \eqref{system} that is analogous to~\cite[Lemma 9.1]{ADK1} using a similar proof and ideas from~\cite{ADK2} and then apply this Schauder estimate in a standard difference quotient argument to show that $\psi, \phi_1, \phi_2,\ldots, \phi_q$ are $C^{2,\mu}$ functions in a relatively open neighborhood of the origin in $U \cup S$.  By Theorem 6.8.2 of~\cite{Morrey}, if $(N,g)$ is a smooth (real-analytic) Riemannian manifold then $\psi, \phi_1, \phi_2,\ldots, \phi_q$ are smooth (real-analytic) functions near the origin in $U \cup S$.  It follows that $u_1,\ldots,u_s$ are smooth (real-analytic) on $\Omega_+ \cup \gamma$ near the origin, $u_{s+1},u_{s+2},\ldots,u_q$ are smooth (real-analytic) on $\Omega_- \cup \gamma$ near the origin, and $\Gamma = \{(y',\psi(y',0),\phi_2(y',0)) : y' = (y_1,\ldots,y_{n-1}) \in S \}$ is a smooth (real-analytic) $(n-1)$-dimensional submanifold near the origin. 
\end{proof}

\section{Regularity for mean curvature flow} \label{sec:parabolic_section}

\subsection{Setup}  In this section we will prove our main regularity result for mean curvature flow, Theorem \ref{mainthm2}.  Let $N$, $\{V_t\}_{t \in I}$, $\{M_{k,t}\}_{t \in I}$, and $\{\Gamma_t\}_{t \in I}$ be as in the statement of Theorem \ref{mainthm2}.  We want to use a setup similar to Subsection \ref{sec:elliptic_setup_subsec} to represent the submanifolds $M_{k,t}$ in the statement of Theorem \ref{mainthm2} as graphs of solutions $u_{k,t}$ of the mean curvature flow system; however, we need to modify this setup slightly since $M_{k,t}$ and $\Gamma_t$ are evolving with time.  Recall that $N$ is smoothly embedded $(n+l)$-dimensional submanifold of $\mathbb{R}^{n+m}$.  Assume that $0 \in N$ and $N$ is tangent to $\mathbb{R}^{n+l} \times \{0\}$ at the origin.  Let $\Omega$ be a connected open set in $\mathbb{R}^n$ containing the origin.  Let $\{\gamma_t\}_{t \in I}$ be a $C^{2+\mu}_{\rm para}$ one-parameter family of $(n-1)$-dimensional submanifolds of $\Omega$ such that $0 \in \gamma_0$, $\gamma_0$ is tangent to $\mathbb{R}^{n-1} \times \{0\}$ at $0$, and for each $t \in I$ the open set $\Omega \setminus \gamma_t$ has exactly two connected components, $\Omega_{t,+}$ and $\Omega_{t,-}$.  Assume that $\Omega_{t,+}$ and $\Omega_{t,-}$ are continuous in time (as a family of $C^2$ domains of $\Omega$) and $(0,0,\ldots,0,1)$ points into $\Omega_{0,+}$ and out of $\Omega_{0,-}$ at the origin.  Let $2 \leq s < q$ be integers.  Represent each $M_{k,t}$ as the graph of a function $u_k(t,\cdot\,)$ on a domain $\Omega_{t,+}$ for $k = 1,2,\ldots,s$ and on a domain $\Omega_{t,-}$ for $k = s+1,\ldots,q$ such that $u_k$ is in $C^{2+\mu}_{\rm para}$ on its domain in time-space.  We will assume that $u_k = u_k(t,\cdot\,)$ satisfies \eqref{coincide} for all $t \in I$ so that $M_{k,t}$ have a common boundary $\Gamma_t = \op{graph} u_1(t,\cdot\,) |_{\gamma_t}$.  Letting $Z = 0$ and $\mathcal{O} = N \cap (\Omega \times \mathbb{R}^m)$, we will assume that $N$, $V_t$, $M_{k,t}$, and $\Gamma_t$ satisfy the hypotheses of Theorem \ref{mainthm2}.  In particular, we will assume that $M_{k,t}$ flow by mean curvature in $N$, which is equivalent to 
\begin{equation} \label{mcf0} 
	\left(\frac{\partial X}{\partial t}\right)^{\perp} = H_{k,t}(X) - \sum_{i=1}^n A_X(\tau_i,\tau_i) 
\end{equation}
on $M_{k,t}$ for all $t \in I$, where $(\cdot)^{\perp}$ denotes the orthogonal projection onto the orthogonal complement of $T_X M_{k,t}$ in $\mathbb{R}^{n+m}$, $H_{k,t}$ is the mean curvature of $M_{k,t}$ as a submanifold of $\mathbb{R}^{n+m}$, and $A_X$ is the second fundamental form of $N$ from Subsection \ref{sec:elliptic_setup_subsec}.  By rewriting \eqref{mcf0} using $M_{k,t} = \op{graph} u_k(t,\cdot\,)$ and taking the inner product of both sides of \eqref{mcf0} with $(-Du^{\kappa},e_{\kappa})$, which is normal to $M_{k,t}$, we find that \eqref{mcf0} is equivalent to $u_k$ satisfying the minimal curvature flow system, 
\begin{equation} \label{mcf} 
	D_t u_k^{\kappa} = \sum_{i,j=1}^n G^{ij}(Du_k) D_{x_i x_j} u_k^{\kappa} 
		+ \sum_{i=1}^n \mathscr{H}^i(x,u_k,Du_k) D_{x_i} u_k^{\kappa} - \mathscr{H}^{n+\kappa}(x,u_k,Du_k) 
\end{equation}
on $\Omega_{t,+}$ for $k = 1,2,\ldots,s$ and on $\Omega_{t,-}$ for $k = s+1,\ldots,q$ for all $t \in I$ and $\kappa = 1,2,\ldots,m$, where $G(p) = (G_{ij}(p))_{i,j=1,\ldots,n}$, $G(p)^{-1} = (G^{ij}(p))_{i,j=1,\ldots,n}$, and $\mathscr{H}^{\kappa}(x,z,p)$ are as defined in Subsection \ref{sec:elliptic_setup_subsec}.  We will assume that $M_k = M_{k,t}$ satisfy \eqref{mainthm_tangents} for all $t \in I$ and the submanifolds $M_{0,k}$ are not all tangent to the same $n$-dimensional plane at the origin.  In place of \eqref{gradatorigin}, \eqref{twoplaneassumption}, and \eqref{twoplaneassumption2} we will assume that 
\begin{gather}
	u_k(0,0) = 0, \hspace{10mm} D_{x_i} u_k(0,0) = 0 \text{ for } i = 1,\ldots,n-1 \hspace{10mm} \text{for all } k = 1,2,\ldots,q, \label{gradatorigin_mcf} \\
	D_{x_n} u_1^1(0,0) > D_{x_n} u_2^1(0,0), \label{twoplaneassumption_mcf} \\
	D_{x_n} u_1^{\kappa}(0,0) = D_{x_n} u_2^{\kappa}(0,0) = 0 \text{ for } \kappa = 2,3,\ldots,m. \label{twoplaneassumption2_mcf} 
\end{gather}
We will assume that $u_k = u_k(t,\cdot\,)$ satisfies \eqref{tangents2} and \eqref{tangents3} for all $t \in I$ for a function $\psi$ which will be determined in Subsection \ref{sec:hodograph_mcf_subsec} below.

\subsection{Partial hodograph transformation}  \label{sec:hodograph_mcf_subsec}  We define a partial hodograph transformation similar to the one in Section \ref{sec:elliptic_section} by 
\begin{equation*}
	\tau = t, \quad y_i = x_i \text{ for } i = 1,\ldots,n-1, \quad y_n = w(t,x) = u^1_1(t,x) - u^1_2(t,x)
\end{equation*}
mapping $\Omega_{t,+}$ and $\gamma_t$ into $\{ y \in \mathbb{R}^n : y_n > 0 \}$ and $\{ y \in \mathbb{R}^n : y_n = 0 \}$ respectively.  For some $\varepsilon > 0$, the image of $\Omega_{t,+}$ and $\gamma_t$ contain $U = \{ y \in B_{\varepsilon}(0) : y_n > 0 \}$ and $S = \{ y \in B_{\varepsilon}(0) : y_n = 0 \}$ respectively for all $t \in (-\varepsilon,\varepsilon) \subseteq I$.  Reducing $I$ if necessary, assume $I = (-\varepsilon,\varepsilon)$.  Let 
\begin{equation*}
	t = \tau, \quad x_i = y_i \text{ for } i = 1,\ldots,n-1, \quad x_n = \psi(\tau,y) \quad \text{for } \tau \in I, \, y \in U \cup S 
\end{equation*}
denote the inverse transformation of $\tau = t$, $y_i = x_i$ for $i = 1,\ldots,n-1$, and $y_n = w(t,x)$, which by \eqref{twoplaneassumption_mcf} exists provided $\varepsilon$ is sufficiently small.  We also define the transformation 
\begin{equation*}
	t = \tau, \quad x_i = y_i \text{ for } i = 1,\ldots,n-1, \quad x_n = \psi(\tau,y) - Cy_n 
\end{equation*}
mapping $U$ into $\Omega_{\tau,-}$ for all $\tau \in I$, where $C > 0$ is a constant such that $D_{y_n} \psi < C$ on $I \times U$ and we assume that $\varepsilon$ is sufficiently small.  

Let 
\begin{align*}
	\phi_k(\tau,y) &= u_k(\tau,y',\psi(\tau,y)) \text{ on } I \times (U \cup S) \text{ for } k = 1,2,3,\ldots,s, \\
	\phi_k(\tau,y) &= u_k(\tau,y',\psi(\tau,y)-Cy_n) \text{ on } I \times (U \cup S) \text{ for } k = s+1,\ldots,q. 
\end{align*}
Since $\Gamma$ is the graph of $\psi$ over $S$, \eqref{mainthm2_tangents} implies that $u = u(t,\cdot\,)$ and $\psi = \psi(t,\cdot\,)$ satisfy \eqref{tangents2} and \eqref{tangents3} for all $t \in I$.  Moreover, Theorem \ref{mainthm2} will be proven if we can show that $\psi, \phi_1,\phi_2, \ldots,\phi_q$ are smooth (second Gervey with real analytic time slices).  

We computing as in Subsection \ref{sec:hodograph_subsec}, 
\begin{align} \label{paratransform_deriv}
	&D_t = D_{\tau} - \frac{D_{\tau} \psi}{D_{y_n} \psi} D_{y_n}, 
	&&D_{x_i} = D_{y_i} - \frac{D_{y_i} \psi}{D_{y_n} \psi} D_{y_n} \text{ for } i < n, 
	&&D_{x_n} = \frac{1}{D_{y_n} \psi} D_{y_n}, \\
	&D_t w = - \frac{D_{\tau} \psi}{D_{y_n} \psi}, 
	&&D_{x_i} w = - \frac{D_{y_i} \psi}{D_{y_n} \psi} \text{ for } i < n, 
	&&D_{x_n} w = \frac{1}{D_{y_n} \psi}, \nonumber 
\end{align}
if $t \in I$ and $x \in \Omega_+ \cup \gamma$ and 
\begin{align} \label{paratransform_deriv2}
	&D_t = D_{\tau} - \frac{D_{\tau} \psi}{D_{y_n} \psi - C} D_{y_n}, 
	&&D_{x_i} = D_{y_i} - \frac{D_{y_i} \psi}{D_{y_n} \psi - C} D_{y_n} \text{ for } i < n, 
	&&D_{x_n} = \frac{1}{D_{y_n} \psi - C} D_{y_n}, 
\end{align}
if $t \in I$ and $x \in \Omega_- \cup \gamma$. 

Under the partial hodograph transformation, \eqref{mcf} transforms to a differential system in $\psi, \phi_1,$ $\phi_2, \ldots, \phi_q$ of the form 
\begin{align} \label{mcf_trans} 
	&F_{1,\kappa}(y, \phi_2, D_{(\tau,y)} \psi, D_{(\tau,y)} \phi_2, D_y^2 \psi, D_y^2 \phi_2) = 0, \\
	&F_{k,\kappa}(y, \phi_k, D_{(\tau,y)} \psi, D_{(\tau,y)} \phi_k, D_y^2 \psi, D_y^2 \phi_k) = 0 \text{ for } k = 2,3,\ldots,q, \nonumber 
\end{align}
in $I \times U$ for $\kappa = 1,2,\ldots,m$ for some smooth (real-analytic) functions $F_{k,\kappa}$ for $k = 1,2,\ldots,q$ and $\kappa = 1,2,\ldots,m$.  \eqref{coincide} on $\bigcup_{t \in I} \{t\} \times \gamma_t$ transforms to \eqref{coincide_trans} on $I \times S$.  \eqref{tangents2} and \eqref{tangents3} on $\bigcup_{t \in I} \{t\} \times \gamma_t$ transform to \eqref{tangents_trans} on $I \times S$. 

\subsection{General parabolic systems and the complementing condition} Consider the general differential system in functions $v_1,v_2\ldots,v_Q$ of the form 
\begin{align} \label{parasystem}
	F_k(\tau,y,\{D_{\tau}^{\alpha} D_y^{\beta} v_j\}_{j=1,\ldots,Q,2b\alpha+|\beta| \leq s_k+t_j}) &= 0 
		\text{ in } I \times U \text{ for } k = 1,2,\ldots,Q, \\ 
	\Phi_h(\tau,y,\{D_{\tau}^{\alpha} D_y^{\beta} v_j\}_{j=1,\ldots,Q,2b\alpha+|\beta| \leq r_h+t_j}) &= 0 \text{ on } I \times S \text{ for } h = 1,2,\ldots,M, \nonumber 
\end{align}
where $F_k$ and $\Phi_h$ are smooth real-valued functions, $b \geq 1$ is an integer, and $s_1,\ldots,s_Q$, $t_1,\ldots,t_Q$, and $r_1,\ldots,r_M$ are integer weights such that $\max_k s_k = 0$, $\min_{k,j} (s_k+t_j) \geq 0$, and $\min_{j,h} (r_h+t_j) \geq 0$.  The linearization of \eqref{parasystem} consists of linear operators of functions $\overline{v}_1,\ldots,\overline{v}_Q$ given by  
\begin{align*}
	\sum_{j=1}^Q \sum_{2b\alpha+|\beta| \leq s_k+t_j} a_{kj}^{\alpha \beta}(\tau,y) D_{\tau}^{\alpha} D_y^{\beta} \overline{v}_j 
		&= \left. \frac{d}{dt} \, F_k(\tau,y,\{D_{\tau}^{\alpha} D_y^{\beta} v_j + t D_{\tau}^{\alpha} D_y^{\beta} \overline{v}_j\}) \right|_{t=0} \text{ in } I \times U, \\
	\sum_{j=1}^Q \sum_{2b\alpha+|\beta| \leq 2b+r_h} b_{hj}^{\alpha \beta}(\tau,y) D_{\tau}^{\alpha} D_y^{\beta} \overline{v}_j 
		&= \left. \frac{d}{dt} \, \Phi_r(\tau,y,\{D_y^{\beta} v_j + t D_y^{\beta} \overline{v}_j\}) \right|_{t=0} \text{ on } I \times S, 
\end{align*}
for $k = 1,2,\ldots,Q$ and $h = 1,2,\ldots,M$, where $a_{kj}^{\alpha \beta}$ are real-valued functions on $U$ and $b_{hj}^{\alpha \beta}$ are real-valued functions on $S$.  Let 
\begin{align*}
	L'_{kj}(\tau,y,D_{\tau},D_y) &= \sum_{2b\alpha+|\beta| = s_k+t_j} a_{kj}^{\alpha \beta}(\tau,y) D_{\tau}^{\alpha} D_y^{\beta} 
		\text{ for } (\tau,y) \in I \times U, \\
	B'_{hj}(\tau,y,D_{\tau},D_y) &= \sum_{2b\alpha+|\beta| = 2b+r_h} b_{hj}^{\alpha \beta}(\tau,y) D_{\tau}^{\alpha} D_y^{\beta} 
		\text{ for } (\tau,y) \in I \times S, 
\end{align*}
for $j = 1,2,\ldots,Q$, $k = 1,2,\ldots,Q$, and $h = 1,2,\ldots,M$ so that $\sum_{j=1}^Q L'_{kj}(\tau,y,D_{\tau},D_y) \, \overline{v}_j$ and $\sum_{j=1}^Q B'_{hj}(\tau,y,D_{\tau},D_y) \, \overline{v}_j$ are the principle parts of the linearization of \eqref{parasystem}.  Following~\cite{Parabolic}, in particular Chapter 1 and Theorem 2.1, we define parabolic systems and the corresponding complementing condition as follows.  We say \eqref{parasystem} is \textit{parabolic} at $(\tau,y) = (\tau_0,y_0)$ if there exists $\delta > 0$ such that the linear system 
\begin{equation*}
	\sum_{j=1}^Q L'_{kj}(\tau_0,y_0,D_{\tau},D_y) \, \overline{v}_j = 0 \text{ in } \mathbb{R} \times \mathbb{R}^n \text{ for } k = 1,2,\ldots,Q 
\end{equation*}
has no nontrivial complex-valued solutions of the form $\overline{v}_j = c_j \, e^{\rho \tau + i\xi \cdot y}$ for some $\xi \in \mathbb{R}^n$ and $\rho \in \mathbb{C}$ with $\op{Re}(\rho) \leq -\delta |\xi|^{2b}$ and $c_j \in \mathbb{C}$ for $j = 1,2,\ldots,Q$.  It follows from the definition of a parabolic system that $\sum_{j=1}^Q (s_j+t_j) = 2bR$ for some integer $R \geq 1$.  Assuming \eqref{parasystem} is parabolic at the $(\tau,y) = (\tau_0,y_0)$, we say \eqref{parasystem} satisfies the \textit{complementing condition} at $(\tau,y) = (\tau_0,y_0)$ if $M = bR$ and there exists $\delta_1 \in (0,\delta)$ such that the system 
\begin{align*}
	\sum_{j=1}^Q L'_{kj}(\tau_0,y_0,D_{\tau},D_y) \, \overline{v}_j &= 0 \text{ in } \mathbb{R} \times \{y : y_n > 0\} \text{ for } k = 1,2,\ldots,Q, \\
	\sum_{j=1}^Q B'_{hj}(\tau_0,y_0,D_{\tau},D_y) \, \overline{v}_j &= 0 \text{ on } \mathbb{R} \times \{y : y_n = 0\} \text{ for } h = 1,2,\ldots,bR, 
\end{align*}
has no nontrivial, complex-valued solutions $\overline{v}_j(\tau,y',y_n) = e^{\rho \tau + i\xi' \cdot y'} \, \overline{v}_j(0,0,y_n)$ that is exponentially decaying as $y_n \rightarrow +\infty$ for some $\rho \in \mathbb{C}$ and $\xi' \in \mathbb{R}^{n-1}$ with $(\rho,\xi') \neq (0,0)$ and $\op{Re}(\rho) \geq -\delta_1 |\xi'|^{2b}$. 

\subsection{Checking parabolicity and the complementing condition}  Now consider the differential system in $\psi$ and $\phi_k^{\kappa}$ with $(k,\kappa) \neq (1,1)$ given by \eqref{mcf_trans} on $I \times U$, \eqref{tangents_trans} on $I \times S$, and \eqref{coincide_trans2} on $I \times S$ with weights $b = 1$, $s = 0$ for the equations of \eqref{tangents_trans}, $t = 2$ for the functions $\psi$ and $\phi_k^{\kappa}$ with $(k,\kappa) \neq (1,1)$, $r = -1$ for the equations of \eqref{tangents_trans}, and $r = -2$ for the equations of \eqref{coincide_trans2}.  In order to apply parabolic regularity to prove Theorem \ref{mainthm2}, we must show that this differential system is parabolic and satisfies the complementing condition at the origin. 

Let $a_k = D_{x_n} u_k(0)$ for $k = 1,2,\ldots,q$.  Recall that \eqref{gradatorigin_trans} holds true with $0$ denoting the origin in time-space.  First, we want to linearize and take the principle part of \eqref{mcf_trans} at the origin.  Consider the equation for $k = 2$ in \eqref{mcf_trans}.  Recall that by \eqref{gradatorigin}, $G_{ii}(Du_2(0)) = 1$ for $i = 1,2,\ldots,n-1$, $G_{nn}(Du_2(0)) = 1+|a_2|^2$, and $G_{ij}(Du_2(0)) = 0$ for $i \neq j$.  Thus linearizing and taking the principle part of \eqref{mcf} yields 
\begin{equation} \label{mcf_lin0}
	\overline{D_t u_k^{\kappa}} = \sum_{i=1}^{n-1} \overline{D_{x_i x_i} u_k^{\kappa}} + \frac{1}{1+|a_2|^2} \, \overline{D_{x_n x_n} u_k^{\kappa}} = 0 
	\text{ on } \mathbb{R} \times \{ y : y_n > 0 \} 
\end{equation}
for $k = 1,2,\ldots,q$ and $\kappa = 1,2,\ldots,m$, where $\overline{D_t u_k^{\kappa}}$ and $\overline{D_{x_i x_i} u_k^{\kappa}}$ for $i = 1,2,\ldots,n$ we let  denote the result of rewriting $D_t u_k^{\kappa}$ and $D_{x_i x_i} u_k^{\kappa}$ respectively as functions of $y$ and then computing their linearization and principle part at the origin.  Using \eqref{paratransform_deriv}, \eqref{paratransform_deriv2}, and \eqref{gradatorigin_trans}, we compute that 
\begin{equation*}
	\overline{D_t u_2} = D_{\tau} \overline{\phi}_2 - D_{\tau} \overline{\psi} \, \frac{D_{y_n} \phi_2(0)}{D_{y_n} \psi(0)} = D_t (\overline{\phi}_2 - a_2 \, \overline{\psi})
\end{equation*}
and similarly compute $\overline{D_t u_k}$ for $k \neq 2$ and we compute $\overline{D_{x_i x_i} u_k}$ like in Subsection \ref{sec:checking_mse_subsec}, see \eqref{u_lin}.  Substituting into \eqref{mcf_lin0}, 
\begin{gather}
	D_t (\overline{\phi}_2^1 - a_1^1 \, \overline{\psi}) = \sum_{i=1}^{n-1} D_{y_i y_i} (\overline{\phi}_2^1 - a_1^1 \, \overline{\psi}) 
		+ \frac{|a_1-a_2|^2}{1+|a_1|^2} \, D_{y_n y_n} (\overline{\phi}_2^1 - a_1^1 \, \overline{\psi}), \label{mcf_lin} \\
	D_t (\overline{\phi}_k^{\kappa} - a_k^{\kappa} \, \overline{\psi}) = \sum_{i=1}^{n-1} D_{y_i y_i} (\overline{\phi}_k^{\kappa} - a_k^{\kappa} \, \overline{\psi}) 
		+ \frac{|a_1-a_2|^2}{1+|a_k|^2} \, D_{y_n y_n} (\overline{\phi}_k^{\kappa} - a_k^{\kappa} \, \overline{\psi}) 
		\text{ if } (k,\kappa) \neq (1,1), \, k \leq s, \nonumber \\
	D_t (\overline{\phi}_k^{\kappa} - a_k^{\kappa} \, \overline{\psi}) = \sum_{i=1}^{n-1} D_{y_i y_i} (\overline{\phi}_k^{\kappa} - a_k^{\kappa} \, \overline{\psi}) 
		+ \frac{|a_1 - a_2|^2}{(1 - C |a_1 - a_2|)^2 (1+|a_k|^2)} \, D_{y_n y_n} (\overline{\phi}_k^{\kappa} - a_k^{\kappa} \, \overline{\psi}) 
		\text{ if } k > s, \nonumber 
\end{gather}
in $\mathbb{R} \times \{y : y_n > 0\}$.  \eqref{mcf_lin} is obviously an parabolic system in $\overline{\phi}_2^1 - a_1^1 \, \overline{\psi}$ and $\overline{\phi}_k^{\kappa} - a_k^{\kappa} \, \overline{\psi}$ for $(k,\kappa) \neq (1,1)$.  In particular, if $\overline{\phi}_2^1 - a_1^1 \, \overline{\psi} = c_1^1 \, e^{\rho \tau + i\xi \cdot y}$ and $\overline{\phi}_k^{\kappa} - a_k^{\kappa} \, \overline{\psi} = c_k^{\kappa} \, e^{\rho \tau + i\xi \cdot y}$, where $(k,\kappa) \neq (1,1)$, solve \eqref{mcf_lin} for $\xi \in \mathbb{R}^n$, $\rho \in \mathbb{C}$, and $c_k^{\kappa} \in \mathbb{C}$ not all zero, then 
\begin{align*}
	&\rho = -\xi_1^2 - \cdots - \xi_{n-1}^2 - \frac{|a_1-a_2|^2}{1+|a_k|^2} \, \xi_n^2 \text{ for some } k \in \{1,2,\ldots,s\} \text{ or} \\
	&\rho = -\xi_1^2 - \cdots - \xi_{n-1}^2 - \frac{|a_1-a_2|^2}{(1 - C |a_1 - a_2|)^2 \, (1+|a_k|^2)} \, \xi_n^2 \text{ for some } k \in \{s+1,\ldots,q\}. 
\end{align*}
Thus in the definition of parabolic systems we may choose 
\begin{equation*}
	\delta < \min \{1\} \cup \left\{ \frac{|a_1-a_2|^2}{1+|a_k|^2} : k = 1,2,\ldots,s \right\} 
		\cup \left\{ \frac{|a_1-a_2|^2}{(1 - C |a_1 - a_2|)^2 \, (1+|a_k|^2)} : k = s+1,\ldots,q \right\} .
\end{equation*} 

To check the complementing condition, it suffices to consider solutions to \eqref{mcf_lin} of the form 
\begin{equation} \label{barpsi_mcf_eqn}
	\overline{\phi}_2^1 - a_1^1 \, \overline{\psi} = c_1^1 \, e^{\rho \tau + i\xi' \cdot y' - \lambda_1^1 y_n}, \hspace{10mm} 
	\overline{\phi}_k^{\kappa} - a_k^{\kappa} \, \overline{\psi} = c_k^{\kappa} \, e^{\rho \tau + i\xi' \cdot y' - \lambda_k^{\kappa} y_n} \text{ for } (k,\kappa) \neq (1,1), 
\end{equation}
where $\rho \in \mathbb{C}$ and $\xi' \in \mathbb{R}^{n-1}$ with $(\rho,\xi') \neq (0,0)$ and $\op{Re} \rho \geq -\delta_1 |\xi'|^2$ for some $\delta_1 \in (0,\delta)$, $c_k \in \mathbb{C}$, and $\op{Re} \lambda_k^{\kappa} > 0$.  It is readily computed using \eqref{mcf_lin} that  
\begin{align} \label{nu_mcf_eqn}
	\lambda_k^{\kappa} &= \frac{\sqrt{(1+|a_k|^2) \, (\rho + |\xi'|^2)}}{|a_1-a_2|} \text{ if } k \leq s, \\
	\lambda_k^{\kappa} &= \frac{(C \, |a_1 - a_2| - 1) \, \sqrt{(1+|a_k|^2) \, (\rho + |\xi'|^2)}}{|a_1-a_2|} \text{ if } k > s. \nonumber
\end{align}
Recall that the linearization of \eqref{coincide_trans} is \eqref{coincide_lin} and the linearization of \eqref{tangents_trans} is \eqref{tangents_lin}.  Substituting \eqref{barpsi_mcf_eqn}, where $\lambda_k^{\kappa}$ are given by \eqref{nu_mcf_eqn}, into \eqref{coincide_lin} and \eqref{tangents_lin} yields \eqref{coincide_lin2}, \eqref{tangents_lin2a}, and \eqref{tangents_lin2b}.  By the argument in Section \ref{sec:elliptic_section}, the only solution to \eqref{coincide_lin2}, \eqref{tangents_lin2a}, and \eqref{tangents_lin2b} is $c_k^{\kappa} = 0$ for all $k = 1,2,\ldots,q$ and $\kappa = 1,2,\ldots,m$.  Therefore, the system \eqref{mcf_lin}, \eqref{coincide_lin}, and \eqref{tangents_lin} satisfies the complementing condition in $\overline{\phi}_2^1 - a_1 \, \overline{\psi}$ and $\overline{\phi}_k^{\kappa} - a_k^{\kappa} \, \overline{\psi}$ for $(k,\kappa) \neq (1,1)$.  Consequently, the differential system in $\psi$ and $\phi_k^{\kappa}$ for $(k,\kappa) \neq (1,1)$ given by \eqref{mcf_trans} on $I \times U$, \eqref{tangents_trans} on $I \times S$, and \eqref{coincide_trans2} on $I \times S$ is parabolic and satisfies the complimenting condition. 

\begin{proof}[Proof of Theorem \ref{mainthm2} in the case $N$ is smooth]  
Recall that $\psi$ and $\phi_k^{\kappa}$ with $(k,\kappa) \neq (1,1)$ solve a system of the form \eqref{parasystem} that is parabolic and satisfies the complementing condition near the origin.  We can iteratively apply the estimates of~\cite[Theorem 4.11]{Parabolic} in a standard difference quotient argument to show that if $\psi, \phi_1, \phi_2,\ldots,\phi_q \in C^{k+\mu}_{\rm para}(I \times (U \cup S))$ for some integer $k \geq 2$ then $D\psi, D\phi_1, D\phi_2,\ldots,D\phi_q \in C^{k+\mu}_{\rm para}(I \times (U \cup S))$ and thus, noting the continuous embedding $C^{k+\mu}_{\rm para} \subset C^{k-1+\mu}_{\rm para}$, $\psi, \phi_1, \phi_2,\ldots,\phi_q \in C^{k+1+\mu}_{\rm para}(I \times (U \cup S))$.  Therefore, $\psi, \phi_1, \phi_2, \ldots, \phi_q$ are smooth in $I \times (U \cup S)$.  It follows that $u_1,\ldots,u_s$ are smooth near the origin in $\bigcup_{t \in I} \{t\} \times (\Omega_{t,+} \cup \gamma_t)$, $u_{s+1},u_{s+2},\ldots,u_q$ are smooth near the origin in $\bigcup_{t \in I} \{t\} \times (\Omega_{t,-} \cup \gamma_t)$, and $\Gamma_t = \{(t,y',\psi(t,y',0),\phi_2(t,y',0)) : t \in I, \, y' = (y_1,\ldots,y_{n-1}) \in S \}$ is smooth near the origin. 
\end{proof}

\section{Gevrey regularity of parabolic systems} \label{sec:gevrey_section}

We will now complete the proof of Theorem \ref{mainthm2} by showing that when $N$ is real-analytic, $\psi$ and $\phi_k^{\kappa}$ from Section \ref{sec:parabolic_section} are second Gevrey and real-analytic on each time-slice.  This will essentially follow the arguments of~\cite{Friedman}.  However, we must slight modify of the arguments of~\cite{Friedman} to account for the fact that the derivatives of $\tau$ and $y$ are weighted differently.  In particular, we treat the combinatorial aspects of the argument using \eqref{parareg_combo} below.  We in fact prove the following general result: 

\begin{theorem} \label{parareg_thm}
Let $I = (-\rho_0^2,\rho_0^2)$, $U = B_{\rho_0}(0) \cap \{y : y_n > 0\}$, and $S = B_{\rho_0}(0) \cap \{y : y_n = 0\}$ for $\rho_0 > 0$.  Suppose $v_1,v_2\ldots,v_Q \in C^{\infty}(I \times (U \cup S))$ are solutions to the differential system \eqref{parasystem} for some smooth real-valued functions $F_k$ and $\Phi_h$ and integers $b \geq 1$, $s_1,\ldots,s_Q$, $t_1,\ldots,t_Q$, and $r_1,\ldots,r_M$ such that $\max_k s_k = 0$, $\min_{k,j} (s_k+t_j) \geq 0$, $\max_h r_h \leq -1$, and $\min_{j,h} (r_h+t_j) \geq 0$.  Assume $b$ divides each $s_k$ and $t_j$.  Then for every $I' \subset \subset I$ and $U' \subset \subset U$ there exists constants $\widehat{H}_0, \, \widehat{H} \in (0,\infty)$ such that 
\begin{equation} \label{parareg_concl}
	\sup_{I' \times U'} |D_{\tau}^{\alpha} D_y^{\beta} v_j|  
	\leq (2b\alpha+|\beta|)! \, \widehat{H}_0 \, \widehat{H}^{2b\alpha+|\beta|} 
\end{equation}
for all $\alpha$, $\beta$, and $j = 1,2,\ldots,Q$ ($\widehat{H}_0$ and $\widehat{H}$ are independent of $\alpha$ and $\beta$.)  In particular, each $v_j$ is locally Gevrey class $2b$ in $I \times U$ and each $v_j(t,\,\cdot\,)$ is locally real-analytic in $U$ for each $t \in I$. 
\end{theorem}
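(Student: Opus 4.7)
The plan is to adapt Friedman's Gevrey regularity argument \cite{Friedman} for elliptic systems to the parabolic setting of \eqref{parasystem}. The overall strategy is to establish \eqref{parareg_concl} by induction on the weighted derivative order $N := 2b\alpha + |\beta|$, exploiting that (i) \eqref{parasystem} is parabolic and satisfies the Schauder-type interior and boundary estimates of \cite[Theorem 4.11]{Parabolic}, and (ii) its linearization has a principal part whose top-order $D_{y_n}$ coefficient is invertible near the boundary, so high normal derivatives can be solved for in terms of lower ones and tangential ones.

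First I would differentiate \eqref{parasystem} tangentially by $D_\tau^\alpha D_{y'}^{\beta'}$ with $y' = (y_1,\ldots,y_{n-1})$. Since $\tau$ and $y'$ are tangent to $I \times S$, the boundary condition structure is preserved, and $w_j := D_\tau^\alpha D_{y'}^{\beta'} v_j$ satisfies a linear parabolic boundary value problem whose principal part is the linearization of \eqref{parasystem} at $v$ and whose forcing is a Faà di Bruno–Leibniz sum of products of lower-order derivatives of $v$. The linearized system is parabolic and satisfies the complementing condition (as already used in the smooth part of Theorem \ref{mainthm2}), so \cite[Theorem 4.11]{Parabolic} applied on nested subdomains $I'' \Subset I'$, $U'' \Subset U'$ gives a $C^{2b+\mu}_{\rm para}$ bound on $w_j$ in terms of a lower Hölder norm of $w_j$ and of the forcing. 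Inducting on $N$ and assuming \eqref{parareg_concl} for all orders strictly less than $N$, the problem reduces to controlling these Leibniz sums by $(2b\alpha + |\beta'|)! \, \widehat{H}_0 \, \widehat{H}^{2b\alpha+|\beta'|}$ with constants absorbable by choosing $\widehat{H}$ large; this establishes the Gevrey bound for all tangential derivatives.

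Second I would convert the tangential Gevrey bound into the full bound \eqref{parareg_concl} by trading $D_{y_n}$ derivatives against $D_\tau$ and $D_{y'}$ derivatives. Because \eqref{parasystem} is parabolic, the coefficient matrix of the principal $D_{y_n}$-symbol of the linearization at $v$ is invertible near the origin, so one can algebraically solve \eqref{parasystem} for the top $D_{y_n}$-derivatives of the $v_j$ in terms of $D_\tau v_j$, derivatives with strictly fewer $D_{y_n}$ factors, and lower-order data. Differentiating this relation and inducting on the number $\beta_n$ of $D_{y_n}$ derivatives (treating each newly produced $D_\tau$ as contributing $2b$ units to the weighted order) lifts the tangential Gevrey bound to \eqref{parareg_concl}. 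Real analyticity of the time-slices $v_j(\tau,\cdot)$ then follows by specializing to $\alpha = 0$, for which \eqref{parareg_concl} reduces to $\sup |D_y^\beta v_j| \leq |\beta|! \, \widehat{H}_0 \, \widehat{H}^{|\beta|}$.

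The main obstacle is the combinatorial step captured by the forthcoming inequality \eqref{parareg_combo}. In the elliptic Faà di Bruno estimate of \cite{Friedman}, one bounds
\[
\sum_{\beta_1 + \cdots + \beta_r = \beta} \binom{|\beta|}{|\beta_1|,\ldots,|\beta_r|} \prod_{i=1}^{r} |\beta_i|! \, H^{|\beta_i|} \leq C_r \, |\beta|! \, H^{|\beta|}
\]
to collapse a product of factorials into a single factorial of the total order. In the parabolic setting each factor is naturally indexed by a pair $(\alpha_i,\beta_i)$ and weighted by $2b\alpha_i + |\beta_i|$, so a single time derivative inside one factor must account for $2b$ units in the target weighted order even when $\alpha_i$ is small; \eqref{parareg_combo} is precisely the bookkeeping device that makes $\sum \prod_i (2b\alpha_i+|\beta_i|)!\,H^{2b\alpha_i+|\beta_i|}$ fit under $C\,(2b\alpha+|\beta|)!\,H^{2b\alpha+|\beta|}$. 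Once \eqref{parareg_combo} is established, combining it with the parabolic Schauder bound and the algebraic inversion of the top-order $D_{y_n}$-symbol closes the induction and completes the proof; the hypotheses that $\max_k s_k = 0$, $\max_h r_h \leq -1$, and that $b$ divides each $s_k$ and $t_j$ are used to ensure the weighted Leibniz bookkeeping in \eqref{parareg_combo} takes integer values consistent with the parabolic scaling.
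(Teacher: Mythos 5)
Your plan correctly identifies the key ingredients — Solonnikov's parabolic Schauder estimates, a Friedman-style majorant argument, and the combinatorial inequality \eqref{parareg_combo} — but departs from the paper in two structural ways. First, you split the induction into a tangential step (differentiating only in $\tau$ and $y'$ so the boundary condition survives) followed by an explicit algebraic inversion of the normal $D_{y_n}$-symbol; the paper instead applies the $L^{6bn}$ Schauder estimate \eqref{parareg_eqn6} directly to $D_\tau^p D_y^q v_j$ for arbitrary spatial multi-indices of length $q$, with \eqref{parareg_eqn3}--\eqref{parareg_eqn4} supplying the forcing and boundary data, so that the missing normal derivatives are supplied by the Schauder gain of $t_j$ orders without a separate inversion step. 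Second, you propose to close the induction in parabolic H\"older spaces $C^{2b+\mu}_{\rm para}$, whereas the paper works entirely in $L^{6bn}$ and only converts to $L^\infty$ at the very end via Sobolev embedding (the exponent $6bn$ is chosen exactly so that $W^{1,6bn} \hookrightarrow L^\infty$ on the parabolic cube); the $L^p$ framework is the better choice here because the cutoffs $\chi$ at scale $(1-\vartheta)/\nu$ interact cleanly with $L^p$ norms but awkwardly with the non-local parabolic H\"older seminorms, which would significantly complicate the factorial bookkeeping. One genuine error worth fixing: your displayed Friedman-type inequality is false as written, since without a factorial shift the left-hand side equals (the number of ordered decompositions of $\beta$ into $r$ parts) times $|\beta|!\,H^{|\beta|}$, and this count grows polynomially in $|\beta|$ rather than remaining bounded by $C_r$. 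The paper's \eqref{parareg_combo} places the shift $(\cdot-2)!$ on both factors precisely so that the resulting series is summable, and this is also why the inductive estimate \eqref{parareg_eqn2} tracks $(2b\alpha+|\beta|-t_j-2)!$ rather than the unshifted factorial; without that shift the induction cannot close.
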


By scaling, we may assume that $\rho_0 = 1$ and that $v_1,v_2\ldots,v_Q \in C^{\infty}(\overline{I \times U})$.  For each $\tau_0 \in \mathbb{R}$, $y_0 \in \mathbb{R}^n$, and $\rho > 0$, let 
\begin{equation*}
	\mathcal{Q}_{\rho}(\tau_0,y_0) = (\tau_0-\rho^{2b},\tau_0+\rho^{2b}) \times B_{\rho}(y_0), \quad 
	\mathcal{Q}^+_{\rho}(\tau_0,y_0) = \mathcal{Q}_{\rho}(\tau_0,y_0) \cap \{(\tau,y) : y_n \geq 0\}. 
\end{equation*}
By reordering the equations of \eqref{parasystem}, assume that $t_1 = \max_j t_j$ and note that $\max_k (-s_k) \leq t_1$ and $\max_h (-r_h) \leq t_1$.  Since $v_j \in C^{\infty}(\overline{I \times U})$, for some constant $1 \leq H_0 < \infty$, 
\begin{equation} \label{parareg_eqn1}
	\|D_{\tau}^{\alpha} D_y^{\beta} v_j\|_{L^{\infty}(\mathcal{Q}^+_1(0,0))} \leq H_0 
\end{equation}
for all $2b\alpha+|\beta| \leq t_j+2t_1+4+6b$.  We will show that there exists a constant $1 \leq H < \infty$ such that 
\begin{align} \label{parareg_eqn2}
	&\left(\frac{1-\vartheta}{2b\alpha+|\beta|-t_j}\right)^{-\frac{2b+n}{6bn}} \, \|D_{\tau}^{\alpha} D_y^{\beta} v_j
		\|_{L^{6bn}(\mathcal{Q}^+_{\frac{1-\vartheta}{2b\alpha+|\beta|-t_j}}(\tau_0,y_0))} 
	\\&\hspace{10mm} \leq (2b\alpha+|\beta|-t_j-2)! \, H_0 \, H^{2b\alpha+|\beta|-t_j-2-2b} \, (1-\vartheta)^{-2b\alpha-|\beta|+t_j} \nonumber 
\end{align}
for all $2b\alpha+|\beta| > t_j+2t_1+4+6b$, $\vartheta \in (0,1)$, and $(\tau_0,y_0) \in \mathcal{Q}^+_{\vartheta}(0,0)$.  ($H_0$ and $H$ are independent of $\alpha$, $\beta$, and $\vartheta$.)  We consider the $L^{6bn}$ norm with $6bn$ chosen large enough that we could use the Sobolev embedding $W^{1,6bn} \hookrightarrow L^{\infty}$ and the $L^{6bn}$ parabolic estimates \eqref{parareg_eqn5} and \eqref{parareg_eqn6} below.  We will prove \eqref{parareg_eqn2} by induction. 

Having proven \eqref{parareg_eqn2}, by the Sobolev embedding theorem we will obtain $L^{\infty}$ estimates on the derivatives of $v_j$.  In particular, there exists a constant $H_0 \leq \widetilde{H}_0 < \infty$ such that if $\nu > 2t_1+4+6b$ and $v_j$ satisfies \eqref{parareg_eqn1} and \eqref{parareg_eqn2} whenever $2b\alpha+|\beta| \leq t_j+\nu+2b$, then 
\begin{equation} \label{parareg_sobolev} 
	\|D_{\tau}^{\alpha} D_y^{\beta} v_j\|_{L^{\infty}(\mathcal{Q}^+_{\vartheta}(0,0))} \leq (\nu-2)! \, \widetilde{H}_0 \, H^{\nu-2} \, (1-\theta)^{-\nu}
\end{equation}
whenever $t_j+2 < 2b\alpha+|\beta| = t_j+\nu$ (where $\widetilde{H}_0$ is independent of $\nu$ and $\vartheta$).  To see this, it suffices to bound $\|D_{\tau}^{\alpha} D_y^{\beta} v_j\|_{L^{\infty}(\mathcal{Q}^+_{(1-\vartheta)/\nu}(\tau_0,y_0))}$ for each $(\tau_0,y_0) \in \mathcal{Q}^+_{\vartheta}(0,0)$ with either $y_0 \in S$ or $\op{dist}(y_0,S) \geq (1-\vartheta)/\nu$.  By the Sobolev embedding theorem, for $2b\alpha+|\beta| = t_j+\nu$, 
\begin{align*} 
	&\|D_{\tau}^{\alpha} D_y^{\beta} v_j\|_{L^{\infty}(\mathcal{Q}^+_{(1-\vartheta)/\nu}(\tau_0,y_0))} 
	\\&\hspace{10mm} \leq C(n,b) \, \sum_{\gamma+|\zeta| \leq 1} \left(\frac{1-\vartheta}{\nu}\right)^{2b\gamma+|\zeta|-\frac{2b+n}{6bn}} \, 
		\|D_{\tau}^{\alpha+\gamma} D_y^{\beta+\zeta} v_j\|_{L^{6bn}(\mathcal{Q}^+_{(1-\vartheta)/\nu}(\tau_0,y_0))} .
\end{align*}
Thus by \eqref{parareg_eqn1} and\eqref{parareg_eqn2} (together with a covering argument), 
\begin{align*} 
	|D_{\tau}^{\alpha} D_y^{\beta} v_j(\tau_0,y_0)| 
	&\leq C(n,b) \sum_{\gamma+|\zeta| \leq 1} \left(\frac{1-\vartheta}{\nu}\right)^{2b\gamma+|\zeta|} 
		(\nu+2b\gamma+|\zeta|-2)! \nonumber \\& \hspace{30mm} \cdot H_0 \, H^{\nu+2b\gamma+|\zeta|-2b-2} \, (1-\vartheta)^{-\nu-2b\gamma-|\zeta|} 
	\\&\leq (\nu-2)! \, \widetilde{H}_0 \, H^{\nu-2} \, (1-\theta)^{-\nu} ,
\end{align*}
choosing $\widetilde{H}_0 = C(n,b) \, H_0$.

Let $D_{\tau}^p D_y ^q v$ denote any derivative of the function $v$ of the form $D_{\tau}^p D_y^{\beta} v$ for $|\beta| = q$, as opposed to the matrix of all such derivatives.  By differentiating \eqref{parasystem} with respect to $\tau$, 
\begin{align} \label{parareg_eqn3}
	&\sum_{j=1}^Q L'_{kj} D_{\tau} v_j = \sum_{j=1}^Q \sum_{2b\alpha+|\beta| = s_k+t_j} a_{kj}^{\alpha \beta} D_{\tau}^{\alpha} D_y^{\beta} D_{\tau} v_j 
		= f_k \text{ in } I \times U \text{ for } k = 1,\ldots,Q, \\
	&\sum_{j=1}^Q B'_{hj} D_{\tau} v_j = \sum_{j=1}^Q \sum_{2b\alpha+|\beta| = r_h+t_j} b_{hj}^{\alpha \beta} D_{\tau}^{\alpha} D_y^{\beta} D_{\tau} v_j 
		= \varphi_h \text{ on } I \times S \text{ for } h = 1,\ldots,bR, \nonumber
\end{align}
where $L'_{kj}$ and $B'_{hr}$ are the operators for the principle part of the linearization of \eqref{parasystem}, and $a_{kj}^{\alpha \beta}$, $b_{kj}^{\alpha \beta}$, $f_k$, and $\varphi_h$ are smooth functions of $(\tau,y)$.  $a_{kj}^{\alpha \beta}$ and $f_k$ can be expressed in terms of $F_k$ and $D_{\tau}^{\alpha'} D_y^{\beta'} v_{j'}$ for $2b|\alpha'| + |\beta'| \leq s_k+t_{j'}$.  $b_{hj}^{\alpha \beta}$ and $\varphi_h$ can be expressed in terms of $\Phi_h$ and $D_{\tau}^{\alpha'} D_y^{\beta'} v_{j'}$ for $2b|\alpha'| + |\beta'| \leq r_h+t_{j'}$.  Extend $\Phi_h(\tau,y,\{z^{\alpha \beta}_j\})$ to a function of $\tau \in I$, $y \in U \cup S$, and $\{z^{\alpha \beta}_j\}$ that is independent of $y_n$ so that $b_{hj}^{\alpha \beta}$ and $\varphi_h$ extend to functions of $\tau \in I$ and $y \in U \cup S$.  By differentiating \eqref{parareg_eqn3} by $D_{\tau}^{p-1} D_y^q$, 
\begin{align} \label{parareg_eqn4}
	\sum_{j=1}^Q L'_{kj} D_{\tau}^p D_y^q v_j ={}& - \sum_{j=1}^Q \sum_{\lambda=2}^{p-1} \sum_{\sigma=0}^q \sum_{2b\alpha+|\beta| = s_k+t_j} 
		{p-1 \choose \lambda} {q \choose \sigma} D_{\tau}^{\lambda} D_y^{\sigma} a_{kj}^{\alpha \beta} 
		D_{\tau}^{p-1-\lambda+\alpha} D_y^{q-\sigma+|\beta|} v_j \\& + D_{\tau}^{p-1} D_y^q f_k 
	\equiv g_k \text{ in } I \times U \text{ for } k = 1,\ldots,Q, \nonumber \\
	\sum_{j=1}^Q B'_{hj} D_{\tau}^p D_y^q v_j ={}& - \sum_{j=1}^Q \sum_{\lambda=2}^{p-1} \sum_{\sigma=0}^q \sum_{2b\alpha+|\beta| = r_h+t_j} 
		{p-1 \choose \lambda} {q \choose \sigma} D_{\tau}^{\lambda} D_y^{\sigma} b_{hj}^{\alpha \beta} 
		D_{\tau}^{p-1-\lambda+\alpha} D_y^{q-\sigma+|\beta|} v_j \nonumber \\& + D_{\tau}^{p-1} D_y^q \varphi_h 
	\equiv \psi_h \text{ on } I \times S \text{ for } h = 1,\ldots,bR, \nonumber
\end{align}
for every integer $p \geq 1$ and $q \geq 0$, where recall that $D_{\tau}^{\lambda} D_y^{\sigma} a_{kj}^{\alpha \beta}$ for instance means any derivative of order $\lambda$ in $\tau$ and $\sigma$ in $y$ of $a_{kj}^{\alpha \beta}$ and thus the first sum of \eqref{parareg_eqn4} means sum over ${p-1 \choose \lambda} {q \choose \sigma}$ terms consisting of a derivative of order $\lambda$ in $\tau$ and $\sigma$ in $y$ of $a_{kj}^{\alpha \beta}$ times a derivative of order $p-1-\lambda+\alpha$ in $\tau$ and $q-\sigma+|\beta|$ in $y$ of $v_j$, with the particular derivatives possibly differing for each term.  (This is useful notation adopted from~\cite{Friedman}.)  Note that if instead $p = 0$ and $q \geq 1$, we obtain expressions similar to \eqref{parareg_eqn3} and \eqref{parareg_eqn4} by differentiating \eqref{parasystem} by $D_y$ and then by $D_y^{q-1}$.

We want to bound $D_{\tau}^p D_y^q v_j$ using the estimates of~\cite{Parabolic} for solutions to the linear parabolic systems corresponding to the operators $L'_{kj}$ and $B'_{hj}$.  Suppose $v_j$ are smooth solutions to $L'_{kj} v_j = g_k$ in $I \times U$ for $k = 1,\ldots,Q$ and $B'_{hj} v_j = \psi_h$ on $I \times S$ for $h = 1,\ldots,bR$ for some $g_k, \psi_h \in C^{\infty}(I \times U)$.  If $v_j = g_k = \psi_h = 0$ in an open neighborhood of $\partial (I \times U) \setminus (I \times S)$, then by~\cite[Theorems 5.1 and 5.7]{Parabolic}, 
\begin{align} \label{parareg_eqn5}
	&\sum_{j=1}^Q \sum_{2b\alpha+|\beta| = t_j} \|D_{\tau}^{\alpha} D_y^{\beta} v_j\|_{L^{6bn}(\mathcal{Q}^+_1)} 
	\leq C \left( \sum_{j=1}^Q \|v_j\|_{L^{6bn}(\mathcal{Q}_1)} \right. \\& \left. \hspace{10mm}
	+ \sum_{k=1}^Q \sum_{2b\alpha+|\beta| \leq -s_k} \|D_{\tau}^{\alpha} D_y^{\beta} g_k\|_{L^{6bn}(\mathcal{Q}^+_1)}
	+ \sum_{h=1}^{bR} \sum_{2b\alpha+|\beta| \leq -r_h} \|D_{\tau}^{\alpha} D_y^{\beta} \psi_h\|_{L^{6bn}(\mathcal{Q}^+_1)} \right) \nonumber 
\end{align}
for some constant $C \in (0,\infty)$ depending only on $n$, $b$, $Q$, $R$, and the operators $L_{kj},B_{hj}$ and their weights $r_h,s_k,t_j$.  Moreover, if $v_j = g_k = \psi_h = 0$ on an open neighborhood of $I \times S$, we may drop the last sum in \eqref{parareg_eqn5}. 

Now let $\tau_0 \in I$, $y_0 \in U \cup S$, and $\rho,\delta > 0$ such that $\mathcal{Q}^+_{\rho+\delta}(\tau_0,y_0) \subseteq \mathcal{Q}^+_1(0,0)$.  Let $\chi \in C^{\infty}(\mathbb{R}^{1+n})$ be a smooth function such that $0 \leq \chi \leq 1$, $\chi = 1$ on $\mathcal{Q}_{\rho}(\tau_0,y_0)$, $\chi = 0$ on $\mathbb{R}^{1+n} \setminus \mathcal{Q}_{\rho+\delta}(\tau_0,y_0)$, and $|D^{\alpha}_{\tau} D_y^{\beta} \chi| \leq C(n,b,\alpha,\beta) \, \delta^{-2b\alpha-|\beta|}$ for $2b\alpha+|\beta| \leq t_1$.  For instance, fix $\chi_0 \in C^{\infty}(\mathbb{R})$ such that $0 \leq \chi_0 \leq 1$, $\chi_0 = 1$ on $(-\infty,0]$, and $\chi_0 = 0$ on $[1,\infty)$ and let 
\begin{equation*}
	\chi(\tau,y) = \chi_0\left(\frac{|\tau-\tau_0| - \rho^{2b}}{(\rho+\delta)^{2b} - \rho^{2b}}\right) \chi_0\left(\frac{|x-x_0|-\rho}{\delta}\right) 
\end{equation*}
so that 
\begin{equation*}
	|D_{\tau}^{\alpha} D_y^{\beta} \chi(\tau,y)| \leq C(\chi_0,\alpha,\beta) \, ((\rho+\delta)^{2b} - \rho^{2b})^{-\alpha} \, \delta^{-|\beta|}
	\leq C(\chi_0,\alpha,\beta) \, \delta^{-2b\alpha-|\beta|}
\end{equation*}
since $(\rho+\delta)^{2b} - \rho^{2b} \geq \delta^{2b}$.  If $v_j$ are smooth solutions to $L'_{kj} v_j = g_k$ in $I \times U$ for $k = 1,\ldots,Q$ and $B'_{hj} v_j = \psi_h$ on $I \times S$ for $h = 1,\ldots,bR$, then 
\begin{align*}
	L'_{kj}(\chi v_j) &= \chi g_k + L'_{kj}(\chi v_j) - \chi L'_{kj} v_j \text{ in } I \times U \text{ for } k = 1,\ldots,Q, \\
	B'_{hj}(\chi v_j) &= \chi \psi_h + B'_{hj}(\chi v_j) - \chi B'_{hj} v_j \text{ on } I \times S \text{ for } h = 1,\ldots,bR, 
\end{align*}
where $L'_{kj}(\chi v_j) - \chi L'_{kj} v_j$ as an operator of $v_j$ has order $< s_k+t_j$ and $B'_{hj}(\chi v_j) - \chi B'_{hj} v_j$ has order $< r_h+t_j$.  Hence by \eqref{parareg_eqn5} with $\chi v_j$ in place of $v_j$, 
\begin{align} \label{parareg_eqn6}
	&\sum_{j=1}^Q \sum_{2b\alpha+|\beta| = t_j} 
		\|D_{\tau}^{\alpha} D_y^{\beta} v_j\|_{L^{6bn}(\mathcal{Q}^+_{\rho}(\tau_0,y_0))} 
	\\& \hspace{10mm} \leq C \left( \sum_{j=1}^Q \sum_{2b\alpha+|\beta| < t_j} \delta^{-t_j+2b\alpha+|\beta|} 
		\|D_{\tau}^{\alpha} D_y^{\beta} v_j\|_{L^{6bn}(\mathcal{Q}^+_{\rho+\delta}(\tau_0,y_0))} \right. \nonumber 
	\\& \hspace{20mm} + \sum_{k=1}^Q \sum_{2b\alpha+|\beta| \leq -s_k} \delta^{s_k+2b\alpha+|\beta|} 
		\|D_{\tau}^{\alpha} D_y^{\beta} g_k\|_{L^{6bn}(\mathcal{Q}^+_{\rho+\delta}(\tau_0,y_0))} \nonumber 
	\\& \hspace{30mm}  \left. + \sum_{h=1}^{bR} \sum_{2b\alpha+|\beta| \leq -r_h} \delta^{r_h+2b\alpha+|\beta|} 
		\|D_{\tau}^{\alpha} D_y^{\beta} \psi_h\|_{L^{6bn}(\mathcal{Q}^+_{\rho+\delta}(\tau_0,y_0))} \right) \nonumber 
\end{align}
for some constant $C \in (0,\infty)$ is a constant depending only on $n$, $b$, $Q$, $R$, and the operators $L_{kj},B_{hj}$ including their weights and the norms $\|a_{kj}^{\alpha \beta}\|_{C^{t_1}(\mathcal{Q}^+_1(0,0))}$ and $\|b_{hj}^{\alpha \beta}\|_{C^{t_1}(\mathcal{Q}^+_1(0,0))}$ of their coefficients.  Note that the last sum in \eqref{parareg_eqn6} can be dropped if $\mathcal{Q}_{\rho+\delta}(\tau_0,y_0) \subseteq \mathcal{Q}^+_1(0,0)$. 

In order to apply \eqref{parareg_eqn6} with $D_{\tau}^p D_y^q v_j$ in place of $v_j$, we need to bound the derivatives of $a^{\alpha \beta}_{kj}$, $b^{\alpha \beta}_{hj}$, $f_k$, and $\varphi_h$.  For this, we will need the following variant of~\cite[Lemma 1]{Friedman}: 

\begin{lemma} \label{majorants_lemma}
Let $p,q \geq 0$ and $s \in \{-t_1,\ldots,-1,0\}$ be integers and $\vartheta \in (0,1)$.  Consider the composition $X(\tau,y,\{D_{\tau}^{\alpha} D_y^{\beta} v_j\}_{2b\alpha+|\beta| \leq s+t_j})$ where $X$ is a real-analytic function and $v_1,v_2,\ldots,v_Q$ are smooth functions.  For some constant $C \in (0,\infty)$ and for $H \in [1,\infty)$ sufficiently large depending only on $n$, $b$, $Q$, $s$, $t_1,\ldots,t_Q$, $H_0$, and $X$ and independent of $p$, $q$, and $\vartheta$, the following hold true. 
\begin{enumerate}
\item[(i)]  If $2bp+q \leq 2-s+4b$ and $v_j$ satisfies \eqref{parareg_eqn1}, then 
\begin{equation} \label{majorants_con1}
	\left\| D_{\tau}^p D_y^q (X(\tau,y,\{D_{\tau}^{\alpha} D_y^{\beta} v_j\}_{2b\alpha+|\beta| \leq s+t_j})\right\|_{L^{\infty}(\mathcal{Q}^+_1(0,0))} 
	\leq C.  
\end{equation}

\item[(ii)]  If $2bp+q > 2-s+4b$ and $v_j$ satisfies \eqref{parareg_eqn1} and \eqref{parareg_eqn2} for $2b\alpha+|\beta| \leq 2bp+q+s+t_j+2b$, then 
\begin{align} \label{majorants_con2}
	&\left\| D_{\tau}^p D_y^q (X(\tau,y,\{D_{\tau}^{\alpha} D_y^{\beta} v_j\}_{2b\alpha+|\beta| \leq s+t_j}) \right\|_{L^{\infty}(\mathcal{Q}^+_{\vartheta}(0,0))} \\
	&\hspace{20mm} \leq C \, (2bp+q+s-2)! \, H^{2bp+q+s-2} \, (1-\vartheta)^{-2bp-q-s}. \nonumber 
\end{align}

\item[(iii)]  If $2bp+q > 2-s+4b$ and $v_j$ satisfies \eqref{parareg_eqn1} and \eqref{parareg_eqn2} for $2b\alpha+|\beta| \leq 2bp+q+s+t_j$, then 
\begin{align} \label{majorants_con3}
	&\left(\frac{1-\vartheta}{2bp+q}\right)^{-\frac{2b+n}{6bn}} \left\| D_{\tau}^p D_y^q (X(\tau,y,\{D_{\tau}^{\alpha} D_y^{\beta} v_j\}_{2b\alpha+|\beta| \leq s+t_j}) 
		\right\|_{L^{6bn}(\mathcal{Q}^+_{(1-\vartheta)/(2bp+q)}(\tau_0,y_0))} \\&\hspace{20mm} 
	\leq C \, (2bp+q+s-1)! \, H^{2bp+q+s-2-2b} \, (1-\vartheta)^{-2bp-q-s} \nonumber 
\end{align}
for all $(\tau_0,y_0) \in \mathcal{Q}^+_{\vartheta}(0,0)$. 
\end{enumerate} \end{lemma}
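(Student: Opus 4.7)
The plan is to prove the three estimates by expanding $D_\tau^p D_y^q$ of the composition $X(\tau,y,\{D_\tau^\alpha D_y^\beta v_j\})$ via Faà di Bruno's formula and then bounding each of the resulting products using: (a) the real-analyticity of $X$, which on any compact set strictly inside the domain of analyticity yields bounds of the form $|D^{\mathsf{k}} X| \leq C\,\mathsf{k}!\,H_X^{\mathsf{k}}$; (b) the baseline bounds \eqref{parareg_eqn1} for the low-order derivatives of $v_j$; and (c) the inductive bounds \eqref{parareg_eqn2} (or the $L^\infty$ Sobolev consequence \eqref{parareg_sobolev} already derived from them) for high-order derivatives. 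The glue that holds the bookkeeping together is the anisotropic factorial inequality \eqref{parareg_combo}, whose purpose is precisely to contract products of factorials $(2bp_i+q_i+\cdots)!$ arising in the Faà di Bruno sum into a single factorial $(2bp+q+s-2)!$ at the cost of a constant factor and a controlled power of $H$.

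Part (i) is the base case and is essentially immediate: when $2bp+q \leq 2-s+4b$, any derivative of $v_j$ produced by differentiating the composition has order at most $2bp+q+s+t_j \leq t_j+2+4b \leq t_j+2t_1+4+6b$, which is within the range controlled by \eqref{parareg_eqn1}; since all arguments of $X$ stay in a compact subset of its domain of analyticity, the composition and its derivatives up to order $2bp+q$ are bounded by a constant $C$ depending only on $X$, $H_0$, and the weights, giving \eqref{majorants_con1}. For part (ii), Faà di Bruno expresses $D_\tau^p D_y^q(X\circ\cdot)$ as a finite sum, indexed by partitions with $\sum_i(2bp_i+q_i)=2bp+q$, of terms of the form $(D^{\mathsf k}X)\prod_i D_\tau^{p_i+\alpha_i}D_y^{q_i+\beta_i}v_{j_i}$ with a combinatorial coefficient $p!\,q!/\prod(p_i!\,q_i!)$. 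We bound $|D^{\mathsf k}X|$ using analyticity, and bound each factor $|D_\tau^{p_i+\alpha_i}D_y^{q_i+\beta_i}v_{j_i}|$ using \eqref{parareg_eqn1} when its order is small and \eqref{parareg_sobolev} when its order is large; in the latter case we pick up a factor of $(2bp_i+q_i+2b\alpha_i+|\beta_i|-2)!\,H^{2bp_i+q_i+2b\alpha_i+|\beta_i|-2}(1-\vartheta)^{-2bp_i-q_i-2b\alpha_i-|\beta_i|}$. Collecting everything, \eqref{parareg_combo} is invoked to consolidate the anisotropic factorials into a single $(2bp+q+s-2)!$ and the powers of $H$ into $H^{2bp+q+s-2}$, provided $H$ has been chosen large enough to absorb $H_X$ and the constants produced by \eqref{parareg_combo}, the Sobolev embedding, and the multiplicative constant in \eqref{parareg_eqn2}.

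Part (iii) follows the same outline on the smaller cylinder $\mathcal{Q}^+_{(1-\vartheta)/(2bp+q)}(\tau_0,y_0)$, but the $L^{6bn}$ norm is now distributed across the Faà di Bruno product by Hölder's inequality: in each summand we keep exactly one factor, namely the single highest-order derivative of some $v_j$, in $L^{6bn}$ and estimate it directly by \eqref{parareg_eqn2} (this is what matches the normalizing power $\bigl(\tfrac{1-\vartheta}{2bp+q}\bigr)^{-(2b+n)/(6bn)}$ in \eqref{majorants_con3}), while the remaining factors are placed in $L^\infty$ via \eqref{parareg_sobolev}. Note that the hypothesis on $2b\alpha+|\beta|$ is one $2b$ weaker here than in (ii) because the additional derivative introduced by working in $L^{6bn}$ rather than $L^\infty$ is absorbed by the Sobolev embedding.

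The main obstacle throughout is combinatorial rather than analytic. Because $\tau$-derivatives carry weight $2b$ while $y$-derivatives carry weight $1$, the partitions that arise in Faà di Bruno produce products of factorials of separately weighted orders $(p_i,q_i)$, while the induction hypothesis phrased in \eqref{parareg_eqn2} only provides bounds in the anisotropic combination $(2bp_i+q_i+\cdots)!$. Reconciling these two pictures — and in particular arranging that the constants produced by the consolidation depend neither on the partition nor on $p,q$ individually, only on $2bp+q$ — is exactly the role of \eqref{parareg_combo}, and is the place where care is required to match the parabolic weights at every level of the argument.
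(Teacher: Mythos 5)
Your plan is in the same family as the paper's argument: both expand $D_\tau^p D_y^q$ of the composition by the chain rule (the paper's \eqref{majorants_eqn1} is precisely the Fa\`a di Bruno expansion you describe), bound each factor via the analyticity of $X$, \eqref{parareg_eqn1}, and \eqref{parareg_sobolev}/\eqref{parareg_eqn2}, and use \eqref{parareg_combo} to collapse the factorials. The paper, however, organizes all of this through the classical method of majorants: it constructs formal power series $\Psi$ and $w^{\alpha\beta}_j$ so that the derivatives of the composition at $0$ are dominated by the derivatives of $\Psi(\tau,y,\{w^{\alpha\beta}_j\})$, and then it bounds that single dominating expression by explicit power-series multiplication. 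This is not merely cosmetic; it is what makes the combinatorics close, and it is exactly the part your sketch leaves open.

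The genuine gap is in how you consolidate factorials. A Fa\`a di Bruno term with $k$ factors $D_\tau^{\gamma_i}D_y^{\zeta_i}w^{\alpha_i\beta_i}_{j_i}$ contributes a $k$-fold product of weighted factorials, while \eqref{parareg_combo} as stated controls only a \emph{sum} of products of \emph{two} factorials. To collapse a $k$-fold product you must iterate it $k-1$ times, and each iteration produces a multiplicative constant of order $2\pi^2$; since $k$ is unbounded (it grows with $2bp+q$), the naive accumulated constant $(2\pi^2)^{k-1}$ is not a ``single fixed constant absorbed into $H$,'' as your sketch suggests. The paper's inductive bound \eqref{majorants_eqn7} on $w^k$ is exactly this iteration, and the crucial point is that the accumulated constant $C_0^{k-1}$ is compensated by a reduction of $k$ in the exponent of $H$; this reduction is available because each factor carries at least a first-order derivative, so the $H$-exponents of the $k$ factors sum to strictly less than $2bp+q+s-2$. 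The choice $H \geq 2NC_0 K\widetilde H_0$ then absorbs the growth. Without articulating this trade-off between the number of factors and the power of $H$, your argument does not close.

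Two further points in part (iii) are also missing. First, the H\"older split (one factor in $L^{6bn}$, the rest in $L^\infty$) requires that at most one factor have large order; the paper justifies this by noting that $2b\gamma_i+|\zeta_i| > 2bp+q-2b$ for at most one $i$ (otherwise summing two such gives a contradiction with $2bp+q > 2-s+4b$). Second, if \emph{all} factors have low order (so \eqref{parareg_eqn2} applies to none of them), the $L^{6bn}$ factor must instead be controlled by \eqref{parareg_eqn1} together with the small volume of $\mathcal{Q}^+_{(1-\vartheta)/(2bp+q)}(\tau_0,y_0)$; the paper handles this case separately via the auxiliary majorant $\widetilde w = e^{H_0(\tau+\xi)}-1$ and the estimate \eqref{majorants_eqn12}. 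Both points need to appear for the proof to be complete.
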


Cases (i), (ii), and (iii) in Lemma \ref{majorants_lemma} will allow us to bound both $L^{6bn}$ and $L^{\infty}$ norms of derivatives of compositions.  As we will see below, this will allow us to bound products of derivatives of $a^{\alpha \beta}_{kj}$ (or $b^{\alpha \beta}_{hj}$) and $v_j$ using Lemma \ref{majorants_lemma} and the H\"{o}lder inequality $\|fg\|_{L^{6bn}} \leq \|f\|_{L^{\infty}} \|g\|_{L^{6bn}}$. 

In order to prove and apply Lemma \ref{majorants_lemma}, it will be useful to first make the following preliminary observation.  For all integers $m,n \geq 0$ and $0 \leq k \leq m+n$ ($m$ and $n$ distinct from the dimensions $m$ and $n$ above), we can fill $m+n$ slots with $k$ items, ignoring order, by filling the first $m$ slots with $i$ items and filling the remaining $n$ slots the remaining items and thus we obtain the combinatorial identity 
\begin{equation} \label{parareg_combo0}
	\sum_{i=\max\{0,k-n\}}^{\min\{m,k\}} {m \choose i} {n \choose k-i} = {m+n \choose k}. 
\end{equation} 
We claim that using \eqref{parareg_combo0} we can show that for arbitrary integers $m,n \geq 0$ with $2bm+n \geq 4$, 
\begin{align} \label{parareg_combo}
	\sum_{0 \leq i \leq m, \,\, 0 \leq j \leq n, \,\, 2 \leq 2bi+j \leq 2bm+n-2} \frac{(2bi+j-2)! \, (2b(m-i)+n-j-2)!}{i! \, j! \, (m-i)! \, (n-j)!} &\\ 
	\leq 2\pi^2 \, \frac{(2bm+n-2)!}{m! \, n!}& \nonumber 
\end{align}
By multiplying both sides by $m! \, n!$, we see that we want to bound the quantity  
\begin{equation*}
	S = \sum_{0 \leq i \leq m, \,\, 0 \leq j \leq n, \,\, 2 \leq 2bi+j \leq 2bm+n-2} {m \choose i} {n \choose j} (2bi+j-2)! \, (2b(m-i)+n-j-2)! 
\end{equation*}
above by $2\pi^2 \, (2bm+n-2)!$.  Using the change of variable $k = 2bi+j$, 
\begin{equation*}
	S = \sum_{k=2}^{2bm+n-2} \sum_{\max\{0,(k-n)/2b\} \leq i \leq \min\{m,k/2b\}} {m \choose i} {n \choose k-2bi} (k-2)! \, (2bm+n-k-2)! \,.
\end{equation*}
By \eqref{parareg_combo0}, 
\begin{align} \label{parareg_combo1}
	S &\leq \sum_{k=2}^{2bm+n-2} \sum_{\max\{0,(k-n)/2b\} \leq i \leq \min\{m,k/2b\}} {2bm \choose 2bi} {n \choose k-2bi} (k-2)! \, (2bm+n-k-2)! 
	\\&\leq \sum_{k=2}^{2bm+n-2} {2bm+n \choose k} (k-2)! \, (2bm+n-k-2)! \,. \nonumber 
	\\&\leq \sum_{k=2}^{2bm+n-2} \frac{(2bm+n)!}{(k-1)^2 (2bm+n-k-1)^2}. \nonumber 
\end{align}
Using the identity $\sum_{k=2}^{\infty} (k-1)^{-2} = \pi^2/6$ and $\frac{N(N-1)}{(N-2)^2} \leq 3$ for $N \geq 4$, we obtain for every integer $N \geq 4$ that 
\begin{align} \label{parareg_combo2}
	\sum_{k=2}^{N-2} \frac{N!}{(k-1)^2 \, (N-k-1)^2}  
	&= \sum_{k=2}^{N-2} \frac{N! }{(N-2)^2} \left( \frac{1}{k-1} + \frac{1}{N-k-1} \right)^2  
	\\&\leq \frac{4 \, N!}{(N-2)^2} \sum_{k=2}^{N-2} \frac{1}{(k-1)^2}  
	\leq 2\pi^2 \, (N-2)! \,.  \nonumber 
\end{align}
By combining \eqref{parareg_combo1} and \eqref{parareg_combo2} with $N = 2bm+n$, we obtain \eqref{parareg_combo}.

\begin{proof}[Proof of Lemma \ref{majorants_lemma}]
Observe that for arbritrary smooth functions $\Psi(\tau,y,\{z^{\alpha \beta}_j\}_{2b\alpha+|\beta| \leq s+t_j})$ and $w^{\alpha \beta}_j(\tau,y)$, $D_{\tau}^p D_y^q (\Psi(\tau,y,\{w^{\alpha \beta}_j\}_{2b\alpha+|\beta| \leq s+t_j}))$ is a linear combination of terms of the form 
\begin{equation} \label{majorants_eqn1}
	D_{\tau}^{\gamma_0} D_y^{\zeta_0} D_{z^{\alpha_1 \beta_1}_{j_1}} \cdots D_{z^{\alpha_k \beta_k}_{j_k}} \Psi 
	\cdot \prod_{i=1}^k D_{\tau}^{\gamma_i} D_y^{\zeta_i} w^{\alpha_i \beta_i}_{j_i},  
\end{equation}
with nonnegative integer coefficients, where $2b\alpha_i+|\beta_i| \leq s+t_{j_i}$ for $i = 1,2,\ldots,k$, $\gamma_i+|\zeta_i| \geq 1$ for $i = 1,2,\ldots,k$, $\sum_{i=0}^k \gamma_i = p$, and $\sum_{i=0}^k |\zeta_i| = q$ and we allow $k = 0$ to include the terms $D_{\tau}^p D_y^q \Psi$. 

(i) follows from taking the $L^{\infty}$-norm of \eqref{majorants_eqn2} with $\Psi = X$ and $w^{\alpha \beta}_j = D_{\tau}^{\alpha} D_y^{\beta} v_j$ and applying \eqref{parareg_eqn1}.  It suffices to use \eqref{parareg_eqn1} here since we only consider $D_{\tau}^{\alpha} D_y^{\beta} v_j$ for $2b\alpha+|\beta| \leq s+t_j+2bp+q \leq t_j+2+4b$. 

To prove (ii) and (iii), we need more precise bounds, which we will obtain using majorants.  Let the function $\Psi(\tau,y,\{z^{\alpha \beta}_j\}_{2b\alpha+|\beta| \leq s+t_j})$ be the majorant corresponding to $X$ such that  
\begin{equation} \label{majorants_eqn2}
	\|D^{\gamma} X\|_{L^{\infty}} \leq D^{\gamma} \Psi(0) \text{ for } 1 \leq |\gamma| \leq p+q
\end{equation}
whenever $1 \leq |\gamma| \leq p+q$.  Select majorants $w^{\alpha \beta}_j(\tau,y)$ corresponding to $D_{\tau}^{\alpha} D_y^{\beta} v_j$ such that $w^{\alpha \beta}_j(0) = 0$ and 
\begin{equation} \label{majorants_eqn3}
	(1-\vartheta)^{\max\{2b\alpha+|\beta|+2b\gamma+|\zeta|-t_j,0\}} \, \|D_{\tau}^{\alpha+\gamma} D_y^{\beta+\zeta} v_j
		\|_{L^{\infty}(\mathcal{Q}^+_{\vartheta}(0,0))} 
	\leq D_{\tau}^{\gamma} D_y^{\zeta} w^{\alpha \beta}_j(0)   
\end{equation}
whenever $2b\alpha+|\beta| \leq s+t_j$ and $1 \leq 2b\gamma+|\zeta| \leq 2bp+q$.  By \eqref{majorants_eqn2}, \eqref{majorants_eqn3}, and $2bp+q \geq -s$, when $k \geq 1$, 
\begin{align*}
	&(1-\vartheta)^{2bp+q+s} \, \left\|D_{\tau}^{\gamma_0} D_y^{\zeta_0} D_{z^{\alpha_1 \beta_1}_{j_1}} \cdots 
		D_{z^{\alpha_k \beta_k}_{j_k}} X \cdot \prod_{i=1}^k D_{\tau}^{\gamma_i} D_y^{\zeta_i} v^{\alpha_i \beta_i}_{j_i} 
		\right\|_{L^{\infty}(\mathcal{Q}^+_{\vartheta})} \\
	&\leq (1-\vartheta)^{2bp+q+s} \, D_{\tau}^{\gamma_0} D_y^{\zeta_0} D_{z^{\alpha_1 \beta_1}_{j_1}} \cdots 
		D_{z^{\alpha_k \beta_k}_{j_k}} \Psi(0) \cdot \prod_{i=1}^k (1-\vartheta)^{-\max\{s+2b\gamma_i+|\zeta_i|,0\}} 
		D_{\tau}^{\gamma_i} D_y^{\zeta_i} w^{\alpha_i \beta_i}_{j_i}(0) \\
	&\leq D_{\tau}^{\gamma_0} D_y^{\zeta_0} D_{z^{\alpha_1 \beta_1}_{j_1}} \cdots D_{z^{\alpha_k \beta_k}_{j_k}} \Psi(0) \cdot 
		\prod_{i=1}^k D_{\tau}^{\gamma_i} D_y^{\zeta_i} w^{\alpha_i \beta_i}_{j_i}(0),
\end{align*}
where $2b\alpha_i+|\beta_i| \leq s+t_{j_i}$, $\gamma_i+|\zeta_i| \geq 1$, $\sum_{i=0}^k \gamma_i = p$, and $\sum_{i=0}^k |\zeta_i| = q$.  The case $k = 0$ is similar.  Therefore, by direct comparison via \eqref{majorants_eqn1}, 
\begin{equation} \label{majorants_eqn4}
	(1-\vartheta)^{2bp+q+s} \, \left\| D_{\tau}^p D_y^q (X(\tau,y,\{D_{\tau}^{\alpha} D_y^{\beta} v_j\})) 
	\right\|_{L^{\infty}(\mathcal{Q}^+_{\vartheta})} \leq \left. D_{\tau}^p D_y^q (\Psi(\tau,y,\{w^{\alpha \beta}_j\})) \right|_{\tau=0, \, y=0}.  
\end{equation}

We choose $\Psi$ and $w$ as follows.  Set $\xi = y_1+\cdots+y_n$.  By \eqref{parareg_eqn1} and \eqref{parareg_sobolev}, we can choose $w^{\alpha \beta}_j = w$ given by 
\begin{equation} \label{majorants_eqn5}
	w = \widetilde{H}_0 \xi + \sum_{2 \leq 2bi+j \leq 2bp+q} \frac{(2bi+j-2)!}{i! \, j!} \widetilde{H}_0 H^{\max\{2bi+j+s-2,0\}} \tau^i \xi^j 
\end{equation}
so that \eqref{majorants_eqn3} holds true.  We can choose $\Psi = \Psi\hspace{-.7mm}\left(\tau+y_1+\cdots+y_n,\sum_{j,\alpha,\beta} z_j^{\alpha \beta}\right)$.  Since $X$ is real-analytic, for some constants $1 \leq K_0,K < \infty$, 
\begin{equation*}
	\sup |D_{(\tau,y)}^{\gamma} D_z^{\zeta} X| \leq (\max\{|\gamma|-2,0\})! \, (\max\{|\zeta|-2,0\})! \, K_0 \, K^{|\gamma|+|\zeta|} \text{ for all } \gamma,\zeta, 
\end{equation*}
where $z = \{z^{\alpha \beta}_j\}_{2b\alpha+|\beta| \leq s+t_j}$, and thus we can choose 
\begin{align} \label{majorants_eqn6}
	\Psi = {}& K_0 K (\tau+\xi) + K_0 K N w + K_0 K^2 N (\tau+\xi) w + \sum_{i=2}^{p+q} \frac{K_0 K^i}{(i-1)^2} (\tau+\xi)^i (1+KNw) \\& 
		+ \sum_{j=2}^{p+q} \frac{K_0 K^j}{(j-1)^2} (1+K\tau+K\xi) (Nw)^j + \sum_{i,j=2}^{p+q} \frac{K_0 K^{i+j}}{(i-1)^2 (j-1)^2} (\tau+\xi)^i (N w)^j, \nonumber 
\end{align}
so that \eqref{majorants_eqn2} holds true, where $N$ equals the total numbers of entries of $\{z^{\alpha \beta}_j\}_{2b\alpha+|\beta| \leq s+t_j}$ and depends only on $n$, $s$, and $t_1,\ldots,t_Q$. 

By inductively multiplying $w$ by itself using \eqref{parareg_combo}, for some constant $1 \leq C_0 < \infty$, 
\begin{equation} \label{majorants_eqn7}
	w^k << \widetilde{H}_0^k \xi^k + \sum_{k+1 \leq 2bi+j \leq 2bp+q} \frac{(2bi+j-2)!}{i! \, j!} C_0^{k-1} \widetilde{H}_0^k H^{\max\{2bi+j+s-k-1,0\}} \tau^i \xi^j
\end{equation}
for $k = 1,2,\ldots,2bp+q$, where $f << g$ means $D_{\tau}^i D_{\xi}^j f(0) \leq D_{\tau}^i D_{\xi}^j g(0)$ for all $0 \leq i \leq p$ and $0 \leq j \leq q$ not both zero.  To see this, observe that \eqref{majorants_eqn7} clearly holds true when $k = 1$ and for the induction step, using \eqref{parareg_combo} and $\frac{j}{2bi+j-2} \leq 3$ if $i,j \geq 0$ and $2bi+j \geq 3$, we have for $k \geq 1$ 
\begin{align*}
	w^{k+1} <<{}& \left( \widetilde{H}_0 \xi + \sum_{2 \leq 2bi+j \leq 2bp+q} \frac{(2bi+j-2)!}{i! \, j!} \widetilde{H}_0 H^{\max\{2bi+j+s-2,0\}} \tau^i \xi^j \right) \\&
		\cdot \left( \widetilde{H}_0^k \xi^k + \sum_{k+1 \leq 2bi+j \leq 2bp+q} \frac{(2bi+j-2)!}{i! \, j!} C_0^{k-1} \widetilde{H}_0^k H^{\max\{2bi+j+s-k-1,0\}} 
		\tau^i \xi^j \right) \\
	<<{}& \widetilde{H}_0^{k+1} \xi^{k+1} + \sum_{k+2 \leq 2bi+j \leq 2bp+q} \frac{(2bi+j-3)!}{i! \, (j-1)!} C_0^{k-1} \widetilde{H}_0^{k+1} 
		H^{\max\{2bi+j+s-k-2,0\}} \tau^i \xi^j \\& + \sum_{k+2 \leq 2bi+j \leq 2bp+q} \frac{(2bi+j-k-2)!}{i! \, (j-k)!} \widetilde{H}_0^{k+1} H^{\max\{2bi+j+s-k-2,0\}} 
		\tau^i \xi^j \\& + \sum_{k+3 \leq 2bi+j \leq 2bp+q} \, \sum_{0 \leq r \leq i, \, 0 \leq l \leq j, \, 2 \leq 2br+l \leq 2bi+j-k-1} 
		\frac{(2br+l-2)!(2bi-2br+j-l-2)!}{r! \, l! \, (i-r)! \, (j-l)!} \\&\hspace{81mm} \cdot C_0^{k-1} \widetilde{H}_0^{k+1} H^{\max\{2bi+j+s-k-3,0\}} \tau^i \xi^j \\
	<<{}& \widetilde{H}_0^{k+1} \xi^{k+1} + \sum_{k+2 \leq 2bi+j \leq 2bp+q} \frac{(2bi+j-2)!}{i! \, j!} (3C_0^{k-1} + 3^k + 2\pi^2 C_0^{k-1})
		 \\&\hspace{70mm} \cdot \widetilde{H}_0^{k+1} H^{\max\{2bi+j+s-k-2,0\}} \tau^i \xi^j \\
	<<{}& \widetilde{H}_0^{k+1} \xi^{k+1} + \sum_{k+2 \leq 2bi+j \leq 2bp+q} \frac{(2bi+j-2)!}{i! \, j!} 
		C_0^k \widetilde{H}_0^{k+1} H^{\max\{2bi+j+s-k-2,0\}} \tau^i \xi^j
\end{align*}
if we choose $C_0 = 6 + 2\pi^2$.  By substituting \eqref{majorants_eqn7} into \eqref{majorants_eqn6},  
\begin{align*}
	\Psi << {}& K_0 K (\tau+\xi) + \sum_{i=2}^{p+q} \frac{K_0 K^i}{(i-1)^2} (\tau+\xi)^i 
		+ \left( K_0 + K_0 K (\tau+\xi) + \sum_{i=2}^{p+q} \frac{K_0 K^i}{(i-1)^2} (\tau+\xi)^i \right) \\&
		\cdot \left( K N \widetilde{H}_0 \xi + \sum_{2 \leq 2bk+l \leq 2bp+q} \frac{(2bk+l-2)!}{k! \, l!} K N \widetilde{H}_0 H^{\max\{s+2bk+l-2,0\}} \tau^k \xi^l 
		+ \sum_{j=2}^{p+q} \frac{K^j N^j \widetilde{H}_0^j}{(j-1)^2} \xi^j \right. 
		\\& \left. + \sum_{2 \leq j < 2bk+l \leq 2bp+q} \frac{(2bk+l-2)!}{(j-1)^2 \, k! \, l!} K^j N^j C_0^{j-1} \widetilde{H}_0^j H^{\max\{s+2bk+l-j-1,0\}} \tau^k \xi^l \right) 
\end{align*}
Choose $H$ such that $H \geq 2 N C_0 K \widetilde{H}_0$.  By expanding $(\tau+\xi)^i$ and using the choice of $H$, \eqref{parareg_combo}, and $\sum_{j=0}^{\infty} 2^{-j} = 2$, 
\begin{align} \label{majorants_eqn8}
	\Psi << {}& K_0 K (\tau+\xi) + 2 \sum_{2 \leq i+j \leq p+q} \frac{(i+j-2)!}{i! \, j!} K_0 K^{i+j} \tau^i \xi^j \\&
		+ \left( K_0 + K_0 K (\tau+\xi) + 2 \sum_{2 \leq i+j \leq p+q} \frac{(i+j-2)!}{i! \, j!} K_0 K^{i+j} \tau^i \xi^j \right) \nonumber \\& 
		\cdot \left( K N \widetilde{H}_0 \xi 
		+ C(s) \sum_{2 \leq 2bk+l \leq 2bp+q} \frac{(2bk+l-2)!}{k! \, l!} (K N \widetilde{H}_0)^{2-s} H^{\max\{s+2bk+l-2,0\}} \tau^k \xi^l \right) \nonumber \\ 
	<< {}& K_0 K \tau + 2 K_0 K N \widetilde{H}_0 \xi \nonumber\\ {}&
		+ C(s) \sum_{2 \leq 2bi+j \leq 2bp+q} \frac{(2bi+j-2)!}{i! \, j!} K_0 K^{2-s} (K N \widetilde{H}_0)^{2-s} H^{\max\{s+2bi+j-2,0\}} \tau^i \xi^j. \nonumber
\end{align}
Therefore, by \eqref{majorants_eqn4} we obtain \eqref{majorants_con2}. 

To prove (iii), we will modify the above argument.  Let $\Psi(\tau,y,\{z^{\alpha \beta}_j\}_{2b\alpha+|\beta| \leq s+t_j})$ be a majorant corresponding to $X$ such that \eqref{majorants_eqn2} holds true whenever $1 \leq |\gamma| \leq p+q$.  Select majorants $w^{\alpha \beta}_j(\tau,y)$ corresponding to $D_{\tau}^{\alpha} D_y^{\beta} v_j$ such that $w^{\alpha \beta}_j(0) = 0$, 
\begin{equation} \label{majorants_eqn9}
	(1-\vartheta)^{\max\{2b\alpha+|\beta|+2b\gamma+|\zeta|-t_j,0\}} \, 
		\|D_{\tau}^{\alpha+\gamma} D_y^{\beta+\zeta} v_j\|_{L^{\infty}(\mathcal{Q}^+_{(1-\vartheta)/(2bp+q)}(\tau_0,y_0))} 
	\leq D_{\tau}^{\gamma} D_y^{\zeta} w^{\alpha \beta}_j(0)   
\end{equation}
whenever $2b\alpha+|\beta| \leq s+t_j$ and $1 \leq 2b\gamma+|\zeta| \leq 2bp+q-2b$, and 
\begin{align} \label{majorants_eqn10}
	&(1-\vartheta)^{\max\{2b\alpha+|\beta|+2b\gamma+|\zeta|-t_j,0\}} \, \left(\frac{1-\vartheta}{2bp+q}\right)^{-\frac{2b+n}{6bn}} \, 
		\|D_{\tau}^{\alpha+\gamma} D_y^{\beta+\zeta} v_j\|_{L^{6bn}(\mathcal{Q}^+_{(1-\vartheta)/(2bp+q)}(\tau_0,y_0))} 
	\\&\leq (2bp+q) \, H^{-2b} \, D_{\tau}^{\gamma} D_y^{\zeta} w^{\alpha \beta}_j(0) \nonumber 
\end{align}
whenever $2b\alpha+|\beta| \leq s+t_j$ and $2-s+2b \leq 2b\gamma+|\zeta| \leq 2bp+q$.  Define the majorant $\widetilde{w}^{\alpha \beta}_j$ by 
\begin{equation*}
	\widetilde{w} = \widetilde{w}^{\alpha \beta}_j = e^{H_0(\tau+\xi)} - 1 
\end{equation*} 
so that $\widetilde{w}(0) = 0$ and 
\begin{equation} \label{majorants_eqn11}
	D_{\tau}^{\gamma} D_y^{\zeta} \widetilde{w}(0) \leq H_0 
\end{equation}
for $2b\gamma+|\zeta| \leq 2-s+2b$.  Consider \eqref{majorants_eqn1} with $k \geq 2$, noting that the cases $k = 0,1$ are similar.  Assume $2b\gamma_1+|\zeta_1| \geq 2b\gamma_i+|\zeta_i|$ for all $i \geq 2$.  Observe that $2b\gamma_i+|\zeta_i| > 2bp+q-2b$ for at most one $i$ since otherwise by summing over such $i$ we obtain $2bp+q > 2(2bp+q)-4b$, i.e. $2bp+q < 4b$, contradicting $2bp+q > 2-s+4b$.  If $2b\gamma_1+|\zeta_1| \geq 2-s+2b$, then by \eqref{majorants_eqn2}, \eqref{majorants_eqn9}, \eqref{majorants_eqn10}, and $2bp+q \geq -s$, when $k \geq 1$, 
\begin{align*}
	&(1-\vartheta)^{2bp+q+s} \, \left(\frac{1-\vartheta}{2bp+q}\right)^{-\frac{2b+n}{6bn}} \, 
		\left\|D_{\tau}^{\gamma_0} D_y^{\zeta_0} D_{z^{\alpha_1 \beta_1}_{j_1}} \cdots D_{z^{\alpha_k \beta_k}_{j_k}} X \cdot 
		\prod_{i=1}^k D_{\tau}^{\gamma_i} D_y^{\zeta_i} v^{\alpha_i \beta_i}_{j_i} \right\|_{L^{6bn}(\mathcal{Q}^+_{\frac{1-\vartheta}{2bp+q}}(\tau_0,y_0))} \\
	&\leq (1-\vartheta)^{2bp+q+s} \|D_{\tau}^{\gamma_0} D_y^{\zeta_0} D_{z^{\alpha_1 \beta_1}_{j_1}} \cdots D_{z^{\alpha_k \beta_k}_{j_k}} X\|_{L^{\infty}} \\
		&\hspace{10mm} \cdot \left(\frac{1-\vartheta}{2bp+q}\right)^{-\frac{2b+n}{6bn}} 
		\|D_{\tau}^{\gamma_1} D_y^{\zeta_1} v^{\alpha_1 \beta_1}_{j_1}\|_{L^{6bn}(\mathcal{Q}^+_{\frac{1-\vartheta}{2bp+q}}(\tau_0,y_0))}
		\cdot \prod_{i=2}^k \|D_{\tau}^{\gamma_i} D_y^{\zeta_i} v^{\alpha_i \beta_i}_{j_i}\|_{L^{\infty}(\mathcal{Q}^+_{\frac{1-\vartheta}{2bp+q}}(\tau_0,y_0))} \\
	&\leq (2bp+q) \, H^{-2b} \, (1-\vartheta)^{2bp+q+s} \, D_{\tau}^{\gamma_0} D_y^{\zeta_0} D_{z^{\alpha_1 \beta_1}_{j_1}} \cdots D_{z^{\alpha_k \beta_k}_{j_k}} 
		\Psi(0) \\&\hspace{10mm} \cdot \prod_{i=1}^k (1-\vartheta)^{-\max\{s+2b\gamma_i+|\zeta_j|,0\}} \, 
		D_{\tau}^{\gamma_i} D_y^{\zeta_i} w^{\alpha_i \beta_i}_{j_i}(0) \\
	&\leq (2bp+q) \, H^{-2b} \, D_{\tau}^{\gamma_0} D_y^{\zeta_0} D_{z^{\alpha_1 \beta_1}_{j_1}} \cdots D_{z^{\alpha_k \beta_k}_{j_k}} \Psi(0) \cdot 
		\prod_{i=1}^k D_{\tau}^{\gamma_i} D_y^{\zeta_i} w^{\alpha_i \beta_i}_{j_i}(0). 
\end{align*}
If instead $2b\gamma_i+|\zeta_i| \leq 2-s+2b$ for all $i$, then by \eqref{parareg_eqn1}, \eqref{majorants_eqn2}, and \eqref{majorants_eqn11}
\begin{align} \label{majorants_eqn12}
	&(1-\vartheta)^{2bp+q+s} \, \left\|D_{\tau}^{\gamma_0} D_y^{\zeta_0} D_{z^{\alpha_1 \beta_1}_{j_1}} \cdots 
		D_{z^{\alpha_k \beta_k}_{j_k}} X \cdot \prod_{i=1}^k D_{\tau}^{\gamma_i} D_y^{\zeta_i} v^{\alpha_i \beta_i}_{j_i} 
		\right\|_{L^{\infty}(\mathcal{Q}^+_{(1-\vartheta)/(2bp+q)}(\tau_0,y_0))} \\
	&\leq D_{\tau}^{\gamma_0} D_y^{\zeta_0} D_{z^{\alpha_1 \beta_1}_{j_1}} \cdots D_{z^{\alpha_k \beta_k}_{j_k}} \Psi(0) \cdot H_0^k \nonumber \\
	&\leq D_{\tau}^{\gamma_0} D_y^{\zeta_0} D_{z^{\alpha_1 \beta_1}_{j_1}} \cdots D_{z^{\alpha_k \beta_k}_{j_k}} \Psi(0) \cdot 
		\prod_{i=1}^k D_{\tau}^{\gamma_i} D_y^{\zeta_i} \widetilde{w}^{\alpha_i \beta_i}_{j_i}(0). \nonumber 
\end{align}
By direct comparison via \eqref{majorants_eqn1}, 
\begin{align} \label{majorants_eqn13}
	&(1-\vartheta)^{2bp+q+s} \, \left(\frac{1-\vartheta}{2bp+q}\right)^{-\frac{2b+n}{6bn}} \, \left\| D_{\tau}^p D_y^q (X(\tau,y,\{D_{\tau}^{\alpha} D_y^{\beta} v_j\})) 
		\right\|_{L^{6bn}(\mathcal{Q}^+_{(1-\vartheta)/(2bp+q)}(\tau_0,y_0))} 
	\\&\leq \left. (2bp+q) \, H^{-2b} \, D_{\tau}^p D_y^q (\Psi(\tau,y,\{w^{\alpha \beta}_j\})) \right|_{\tau=0, \, y=0} 
		+ \left. C(n,b) \, D_{\tau}^p D_y^q (\Psi(\tau,y,\{\widetilde{w}^{\alpha \beta}_j\})) \right|_{\tau=0, \, y=0}.  \nonumber 
\end{align}

We can choose $\Psi$ by \eqref{majorants_eqn6}.  Choose $w$ by \eqref{majorants_eqn5} with $e \widetilde{H}_0$ in place of $\widetilde{H}_0$.  Then by \eqref{parareg_sobolev} and $(1-1/k)^{1-k} \leq e$ for $k \geq 1$, 
\begin{align*}
	&\|D_{\tau}^{\alpha} D_y^{\beta} v_j\|_{L^{\infty}(\mathcal{Q}^+_{(1-\vartheta)/(2bp+q)}(\tau_0,y_0))} 
	\\&\leq (2b\alpha+|\beta|-t_j-2)! \, \widetilde{H}_0 \, H^{2b\alpha+|\beta|-t_j-2} \left(\frac{2bp+q-1}{2bp+q} \, (1-\vartheta)\right)^{-2b\alpha-|\beta|+t_j} 
	\\&\leq e \, (2b\alpha+|\beta|-t_j-2)! \, \widetilde{H}_0 \, H^{2b\alpha+|\beta|-t_j-2} (1-\vartheta)^{-2b\alpha-|\beta|+t_j} 
\end{align*}
if $t_j +2+6b < 2b\alpha+|\beta| \leq t_j+2bp+q-2b$, which together with \eqref{parareg_eqn1} gives us \eqref{majorants_eqn9}.  \eqref{parareg_eqn1} and \eqref{parareg_eqn2} imply that 
\begin{align*}
	&\left(\frac{1-\vartheta}{2bp+q}\right)^{-\frac{2b+n}{6bn}} \|D_{\tau}^{\alpha} D_y^{\beta} v_j\|_{L^{6bn}(\mathcal{Q}^+_{(1-\vartheta)/(2bp+q)}(\tau_0,y_0))} 
	\\&\leq \left(\frac{2bp+q}{2b\alpha+|\beta|-t_j}\right)^{\frac{2b+n}{6bn}} \left(\frac{1-\vartheta}{2b\alpha+|\beta|-t_j}\right)^{-\frac{2b+n}{6bn}} 
		\|D_{\tau}^{\alpha} D_y^{\beta} v_j\|_{L^{6bn}(\mathcal{Q}^+_{(1-\vartheta)/(2b\alpha+|\beta|-t_j)}(\tau_0,y_0))} 
	\\&\leq (2bp+q) \, (2b\alpha+|\beta|-t_j-2)! \, \widetilde{H}_0 \, H^{2b\alpha+|\beta|-t_j-2-2b} \, (1-\vartheta)^{-2b\alpha-|\beta|+t_j} 
\end{align*}
if $t_j +2+2b \leq 2b\alpha+|\beta| \leq t_j+2bp+q$, giving us \eqref{majorants_eqn10}. By the above computation of \eqref{majorants_eqn8}, we have the desired bound the first term on the right-hand side of \eqref{majorants_eqn13}, so it remains to bound the last term in \eqref{majorants_eqn13}. 

Observe that $\widetilde{w} << e^{H_0 (\tau+\xi)}$ implies $\widetilde{w}^j << e^{j H_0 (\tau+\xi)}$ and that, since $\widetilde{w}(0) = 0$, the derivatives of $\widetilde{w}^j$ of order $< j$ vanish, hence by the Taylor series of the exponential function 
\begin{equation*}
	\widetilde{w}^j << \sum_{k=j}^{p+q} \frac{j^k}{k!} \, H_0^k \, (\tau+\xi)^k
\end{equation*}
for $j \geq 1$.  Using $j^{k-2}/(k-2)! \leq e^{j-2}$ from the Taylor series of the exponential function, 
\begin{equation*}
	\widetilde{w}^j << \sum_{k=j}^{p+q} \frac{j^2 \, e^{j-2}}{(k-1)^2} \, H_0^k \, (\tau+\xi)^k 
\end{equation*}
for $j \geq 2$.  Using $\frac{j^2}{(j-1)^2} \leq 4$ for $j \geq 2$, 
\begin{align*}
	\sum_{j=2}^{p+q} \frac{N^j \widetilde{w}^j}{(j-1)^2} << {}& \sum_{j=2}^{p+q} \sum_{k=j}^{p+q} \frac{4e^{j-2} N^j}{(k-1)^2} H_0^k (\tau+\xi)^k 
	= \sum_{k=2}^{p+q} \sum_{j=2}^k \frac{4 e^{j-2} N^j}{(k-1)^2} H_0^k (\tau+\xi)^k \\
	<< {}& \sum_{k=2}^{p+q} \frac{4 N^2 (eN)^{k-1}}{(eN-1) (k-1)^2} H_0^k (\tau+\xi)^k
\end{align*}
Hence taking \eqref{majorants_eqn6} with $\widetilde{w}$ in place of $w$ and substituting for $\widetilde{w}$, 
\begin{align*}
	\Psi << {}& K_0 K (\tau+\xi) + \sum_{i=2}^{p+q} \frac{K_0 K^i}{(i-1)^2} (\tau+\xi)^i 
		+ \left( K_0 + K_0 K (\tau+\xi) + \sum_{i=2}^{p+q} \frac{K_0 K^i}{(i-1)^2} (\tau+\xi)^i \right) \\&
		\cdot \left( \sum_{k=1}^{p+q} \frac{H_0^k}{k!} (\tau+\xi)^k + \sum_{k=2}^{p+q} \frac{4 N^2 (e N H_0)^k}{(eN-1) (k-1)^2} (\tau+\xi)^k \right) .
\end{align*}
By expanding this expression and using \eqref{parareg_combo2}, 
\begin{equation*}
	\Psi <<  K_0 (K+H_0) (\tau+\xi) + C(N) \sum_{i=2}^{p+q} \frac{K_0 (K+eNH_0)^i}{(i-1)^2} (\tau+\xi)^i. 
\end{equation*}
By expanding $(\tau+\xi)^i$, 
\begin{equation} \label{majorants_eqn14}
	\Psi <<  K_0 (K+H_0) (\tau+\xi) + C(N) \sum_{2 \leq i+j \leq p+q} \frac{(i+j-2)!}{i! \, j!} K_0 (K+eNH_0)^{i+j} \tau^i \xi^j. 
\end{equation}
Choose $H$ so that $H \geq \max\{2 N C_0 K \widetilde{H}_0,K+eNH_0\}$.  By using \eqref{majorants_eqn8} and \eqref{majorants_eqn14} to compute the right-hand side of \eqref{majorants_eqn13}, we obtain \eqref{majorants_con3}. 
\end{proof}

\begin{proof}[Proof of Theorem \ref{parareg_thm}]  Let $v_j \in C^{\infty}(\overline{I \times U})$ be smooth solutions to the differential system \eqref{parasystem} for real-valued functions $F_k$ and $\Phi_h$ and weights $b,t_j,s_k,r_h$ as in the statement of Theorem \ref{parareg_thm}.  Assume the induction hypotheses that for some 
\begin{equation} \label{parareg_eqn8}
	\nu > 2t_1+4+6b, 
\end{equation}
\eqref{parareg_eqn1} and \eqref{parareg_eqn2} hold true whenever $2b\alpha+|\beta| < t_j+\nu$, where $H \geq 1$ is a large constant to be chosen below and in particular $H$ is large enough that Lemma \ref{majorants_lemma} applies when we bound derivatives of $f_k$, $a_{kj}^{\alpha \beta}$, $\varphi_h$, and $b_{hj}^{\alpha \beta}$ below.  We want to show that \eqref{parareg_eqn2} holds true when $2b\alpha+|\beta| = t_j+\nu$.  Let $p,q \geq 0$ be any integers such that $2bp+q = \nu$ and assume $p \geq 1$, noting that case $p = 0$ is similar.  Let $f_k$ and $\varphi_h$ be as in \eqref{parareg_eqn3} and $g_k$ and $\psi_h$ be as in \eqref{parareg_eqn4}.  Let $\vartheta \in (0,1)$, $(\tau_0,y_0) \in \mathcal{Q}^+_{\vartheta}(0,0)$, $\rho = 2\delta = (1-\vartheta)/\nu$. 

We can bound $\|g_k\|_{L^{6bn}(\mathcal{Q}^+_{3\rho/2}(\tau_0,y_0))}$ using the induction hypothesis and Lemma \ref{majorants_lemma}.  Using induction hypothesis, Lemma \ref{majorants_lemma} (together with a covering argument), and $(1-1/\nu)^{1-\nu} \leq e$, 
\begin{align*}
	&\sum_{2b\alpha+|\beta| \leq -s_k} \rho^{s_k+2b\alpha+|\beta|-\frac{2b+n}{6bn}} \, 
		\|D_{\tau}^{\alpha+p-1} D_y^{\beta} D_y^q f_k \|_{L^{6bn}(\mathcal{Q}^+_{3\rho/2}(\tau_0,y_0))} \\ 
	&\leq C \sum_{2b\alpha+|\beta| \leq -s_k} \left(\frac{1-\vartheta}{\nu}\right)^{s_k+2b\alpha+|\beta|} \, (\nu+2b\alpha+|\beta|+s_k-2)! \nonumber \\&
		\hspace{15mm} \cdot H^{\nu+2b\alpha+|\beta|+s_k-3-2b} \left(\frac{(\nu-1) (1-\vartheta)}{\nu}\right)^{-\nu-2b\alpha-|\beta|-s_k+1} \nonumber \\
	&\leq C \, e \, (\nu-2)! \, H^{\nu-3-2b} \, (1-\vartheta)^{1-\nu} \nonumber 
\end{align*}
for all $k = 1,2,\ldots,Q$ and some constants $C \in (0,\infty)$ depending on $n$, $b$, $Q$, $H_0$, $F_k$, $s_k$, and $t_j$.  

We want to similarly bound 
\begin{align*}
	S = {}& \sum_{2b\alpha+|\beta| \leq -s_k} \sum_{\lambda=0}^{\alpha+p-1} \, \sum_{\sigma=0}^{|\beta|+q} {\alpha+p-1 \choose \lambda} 
		{|\beta|+q \choose \sigma} \, \rho^{s_k+2b\alpha+|\beta|} \\& \hspace{15mm} \cdot 
		\rho^{-\frac{2b+n}{6bn}} \, \|D_{\tau}^{\lambda} D_y^{\sigma} a_{kj}^{\gamma \zeta} \cdot 
		D_{\tau}^{\alpha+\gamma+p-1-\lambda} D_y^{|\beta|+|\zeta|+q-\sigma} v_j\|_{L^{6bn}(\mathcal{Q}^+_{3\rho/2}(\tau_0,y_0))} \nonumber 
\end{align*}
for $j,k = 1,2,\ldots,Q$ and $2b\gamma+|\zeta| = s_k+t_j$.  We bound the terms as follows.  If $2b\lambda+\sigma \leq \nu+s_k-2-4b$, by Lemma \ref{majorants_lemma}(i)(ii) and the induction hypotheses,  
\begin{align*}
	&\|D_{\tau}^{\lambda} D_y^{\sigma} a_{kj}^{\gamma \zeta}\|_{L^{\infty}(\mathcal{Q}^+_{3\rho/2}(\tau_0,y_0))} \cdot 
		\nu^{\frac{2b+n}{6bn}} \, (1-\vartheta)^{-\frac{2b+n}{6bn}} \, \|D_{\tau}^{\alpha+\gamma+p-1-\lambda} D_y^{|\beta|+|\zeta|+q-\sigma} v_j
			\|_{L^{6bn}(\mathcal{Q}^+_{3\rho/2}(\tau_0,y_0))} 
	\\&\leq C \, \nu \, (\max\{2b\lambda+\sigma+s_k-2,0\})! \, (2b\alpha+|\beta|+\nu-2b-2b\lambda-\sigma+s_k-2)! \
	\\&\hspace{10mm} \cdot H^{2b\alpha+|\beta|+\nu+s_k-3-2b} \left(\frac{(\nu-1) (1-\vartheta)}{\nu}\right)^{-2b\alpha-|\beta|-\nu-s_k+1} 
\end{align*}
for some constant $C \in (0,\infty)$, noting that Lemma \ref{majorants_lemma}(i)(ii) applies since $2b\lambda+\sigma+s_k+t_j < t_j+\nu-2b$ and that $2b\alpha+|\beta|+\nu-2b\lambda-\sigma+s_k-2-4b \geq 0$ by \eqref{parareg_eqn8}.  If instead $2b\sigma+\lambda > \nu+s_k-2-4b$, by Lemma \ref{majorants_lemma}(i)(iii), the induction hypotheses, and \eqref{parareg_sobolev}
\begin{align*}
	&\rho^{-\frac{2b+n}{6bn}} \, \|D_{\tau}^{\lambda} D_y^{\sigma} a_{kj}^{\gamma \zeta}\|_{L^{6bn}(\mathcal{Q}^+_{3\rho/2}(\tau_0,y_0))} 
		\cdot \|D_{\tau}^{\alpha+\gamma+p-1-\lambda} D_y^{|\beta|+|\zeta|+q-\sigma} v_j\|_{L^{\infty}(\mathcal{Q}^+_{3\rho/2}(\tau_0,y_0))} 
	\\&\leq C \, (2b\lambda+\sigma+s_k-1)! \, (\max\{2b\alpha+|\beta|+\nu-2b-2b\lambda-\sigma+s_k-2,0\})! \
	\\&\hspace{10mm} \cdot H^{2b\alpha+|\beta|+\nu+s_k-3-2b} \left(\frac{(\nu-1) (1-\vartheta)}{\nu}\right)^{-2b\alpha-|\beta|-\nu-s_k+1} 
\end{align*}
for some constant $C \in (0,\infty)$, noting that Lemma \ref{majorants_lemma}(iii) applies since $2b\lambda+\sigma+s_k+t_j < t_j+\nu$, that \eqref{parareg_sobolev} applies since $t_j + \nu-2b\lambda-\sigma-2b < t_j-s_k+2+4b < t_j+\nu-2b$ by \eqref{parareg_eqn8}, and that $2b\lambda+\sigma+s_k-2-2b > \nu+2s_k-4-6b \geq 0$ by \eqref{parareg_eqn8}.  Putting this all together, 
\begin{align*}
	S \leq {}& C \, \nu \sum_{2b\alpha+|\beta| \leq -s_k} \sum_{\lambda=0}^{\alpha+p-1} \, \sum_{\sigma=0}^{|\beta|+q} {\alpha+p-1 \choose \lambda} 
		{|\beta|+q \choose \sigma} \left(\frac{1-\vartheta}{\nu}\right)^{s_k+2b\alpha+|\beta|} \nonumber \\& \hspace{15mm} \cdot 
		(\max\{2b\lambda+\sigma+s_k-2,0\})! \, (\max\{2b\alpha+|\beta|+s_k+\nu-2b\lambda-\sigma-2b-2,0\})! \nonumber \\& \hspace{15mm} \cdot 
		H^{2b\alpha+|\beta|+\nu+s_k-3-2b} \left(\frac{(\nu-1) (1-\vartheta)}{\nu}\right)^{-2b\alpha-|\beta|-\nu-s_k+1} \nonumber
\end{align*}
for some constant $C \in (0,\infty)$ depending only on $n$, $b$, $Q$, $H_0$, $F_k$, $s_k$, and $t_j$.  Notice that at least one of $2b\lambda+\sigma$ and $2b\alpha+|\beta|+\nu-2b\lambda-\sigma-2b$ is $\geq \nu/2-b$ and thus, recalling \eqref{parareg_eqn8}, 
\begin{align} \label{parareg_eqn10}
	&(\max\{2b\lambda+\sigma+s_k-2,0\})! \, (\max\{2b\alpha+|\beta|+s_k+\nu-2b\lambda-\sigma-2b-2,0\})!  
	\\&\leq C(s_k,b) \, \nu^{s_k} \, (\max\{2b\lambda+\sigma-2,0\})! \, (\max\{2b\alpha+|\beta|+\nu-2b\lambda-\sigma-2b-2,0\})! \nonumber
\end{align}
Using \eqref{parareg_eqn10}, \eqref{parareg_combo}, and $(1-1/\nu)^{1-\nu} \leq e$, 
\begin{align*}
	S &\leq C \, e \sum_{2b\alpha+|\beta| \leq -s_k} \nu^{1-2b\alpha-|\beta|} \, (2b\alpha+|\beta|+\nu-2b-2)! \, H^{\nu-3-2b} \, (1-\vartheta)^{1-\nu}
	\\&\leq C \, e \, (\nu-2)! \, H^{\nu-3-2b} \, (1-\vartheta)^{1-\nu}  
\end{align*}
for some constants $C \in (0,\infty)$ depending only on $n$, $b$, $Q$, $H_0$, $F_k$, $s_k$, and $t_j$.  Therefore,
\begin{align} \label{parareg_eqn11}
	&\sum_{2b\alpha+|\beta| < -s_h} \rho^{s_h+2b\alpha+|\beta|-\frac{2b+n}{6bn}} \, 
		\|D_{\tau}^{\alpha} D_y^{\beta} g_k\|_{L^{6bn}(\mathcal{Q}^+_{3\rho/2}(\tau_0,y_0))} 
	\\&\leq C \, (\nu-2)! \, H^{\nu-3-2b} \, (1-\vartheta)^{1-\nu} \nonumber 
\end{align}
for $k = 1,2,\ldots,Q$ for some constant $C \in (0,\infty)$ depending only on $n$, $b$, $Q$, $H_0$, $F_k$, $s_k$, and $t_j$.  Similarly, 
\begin{align} \label{parareg_eqn12}
	&\sum_{2b\alpha+|\beta| \leq -r_h} \rho^{r_h+2b\alpha+|\beta|-\frac{2b+n}{6bn}} \, 
		\|D_{\tau}^{\alpha} D_y^{\beta} \psi_h \|_{L^{\infty}(\mathcal{Q}^+_{3\rho/2}(\tau_0,y_0))} 
	\\&\leq C \, (\nu-2)! \, H^{\nu-3-2b} \, (1-\vartheta)^{1-\nu} \nonumber 
\end{align}
for $h = 1,2,\ldots,bR$ for some constant $C \in (0,\infty)$ depending only on $n$, $b$, $Q$, $H_0$, $\Phi_h$, $s_k$, and $r_h$. 

By induction hypothesis and $(1-1/\nu)^{1-\nu} \leq e$, 
\begin{align} \label{parareg_eqn13}
	&\sum_{2b\alpha+|\beta| < t_j} \rho^{-t_j+2b\alpha+|\beta|-\frac{2b+n}{6bn}} \, 
		\|D_{\tau}^{\alpha+p} D_y^{\beta} D_y^q v_j \|_{L^{6bn}(\mathcal{Q}^+_{3\rho/2}(\tau_0,y_0))} \\ 
	&\leq C(n,b) \sum_{2b\alpha+|\beta| < t_j} \left(\frac{1-\vartheta}{\nu}\right)^{-t_j+2b\alpha+|\beta|} \, (\nu+2b\alpha+|\beta|-t_j-2)! \nonumber \\&
		\hspace{15mm} \cdot H_0 \, H^{\nu+2b\alpha+|\beta|-t_j-2-2b} \left(\frac{(\nu-1) (1-\vartheta)}{\nu}\right)^{-\nu-2b\alpha-|\beta|+t_j} \nonumber \\
	&\leq C(n,b,t_j) \, (\nu-2)! \, H_0 \, H^{\nu-3-2b} \, (1-\vartheta)^{-\nu}. \nonumber 
\end{align}

By substituting \eqref{parareg_eqn11}, \eqref{parareg_eqn12}, and \eqref{parareg_eqn13} into \eqref{parareg_eqn6} with $D_{\tau}^p D_y^q v_j$ in place of $v_j$,  
\begin{equation*}
	\sum_{j=1}^Q \, \sum_{2b\alpha+|\beta| = t_j+\nu} \rho^{-\frac{2b+n}{6bn}} \, \|D_{\tau}^{\alpha} D_y^{\beta} v_j\|_{L^{6bn}(\mathcal{Q}^+_{\rho}(\tau_0,y_0))} 
	\leq C \, (\nu-2)! \, H^{\nu-3-2b} \, (1-\vartheta)^{-\nu}
\end{equation*}
if $2b\alpha+|\beta| = t_j+\nu$ for some constant $C \in (0,\infty)$ depending only on $n$, $b$, $Q$, $R$, $H_0$, and the nonlinear operators $F_k$ and $\Phi_h$, and their weights $t_j$, $s_k$, and $r_h$.  Choosing $H \geq C/H_0$, we obtain \eqref{parareg_eqn2} for $2b\alpha+|\beta| = t_j+\nu$. 

Finally, having shown \eqref{parareg_eqn2} for all $\alpha,\beta$, by the Sobolev embedding theorem, \eqref{parareg_concl} holds true. 
\end{proof}

\begin{proof}[Proof of Theorem \ref{mainthm2} in the case $N$ is real-analytic]  
By Theorem \ref{mainthm2} in the case that $N$ is smooth, $\psi, \phi_1, \phi_2, \ldots, \phi_q$ are smooth.  Since $N$ is real-analytic, by Theorem \ref{parareg_thm}, $\psi, \phi_1, \phi_2, \ldots, \phi_q$ are second Gevrey with real-analytic time-slices.  It follows that $u_1,\ldots,u_q$ are second Gevrey with real-analytic time slices near the origin and $\Gamma_t = \{(t,y',\psi(t,y',0),\phi_2(t,y',0)) : t \in I, \, y' = (y_1,\ldots,y_{n-1}) \in S \}$ is second Gevrey with real-analytic time slices near the origin.
\end{proof}

\end{document}